\theoremstyle{plain}
\newtheorem{theorem}{Theorem}
\newtheorem{corollary}[theorem]{Corollary}
\newtheorem{lemma}[theorem]{Lemma}
\newtheorem{proposition}[theorem]{Proposition}
\newtheorem{prop}[theorem]{Proposition}
\theoremstyle{definition}
\newtheorem{remark}[theorem]{Remark}
\newtheorem{example}[theorem]{Example}
\newtheorem{definition}[theorem]{Definition}
\newtheorem{conjecture}[theorem]{Conjecture}
\numberwithin{theorem}{section}
\newcommand{\BR}{\mathbf{R}}
\newcommand{\BZ}{\mathbf{Z}}
\newcommand{\BC}{\mathbf{C}}
\newcommand{\BQ}{\mathbf{Q}}
\newcommand{\BA}{\mathbf{A}}
\newcommand{\ten}{\otimes}
\newcommand{\Res}{\text{Res}}
\renewcommand{\ten}{\otimes}
\newcommand{\fu}{\mathfrak{u}}
\newcommand{\ip}[2]{\langle #1, #2 \rangle}
\newcommand{\ad}{\text{ad}}
\newcommand{\Hom}{\text{Hom}}
\newcommand{\MO}{\mathcal{O}}
\newcommand{\mG}{\mathcal{G}}
\newcommand{\mH}{\mathcal{H}}
\newcommand{\mI}{\mathcal{I}}
\newcommand{\mS}{\mathcal{S}}
\newcommand{\mO}{\mathcal{O}}
\newcommand{\mE}{\mathcal{E}}
\newcommand{\mC}{\mathcal{C}}
\newcommand{\mL}{\mathcal{L}}
\newcommand{\fa}{\mathfrak{a}}
\newcommand{\fp}{\mathfrak{p}}
\newcommand{\fq}{\mathfrak{q}}
\newcommand{\fg}{\mathfrak{g}}
\newcommand{\fl}{\mathfrak{l}}
\newcommand{\fn}{\mathfrak{n}}
\newcommand{\fk}{\mathfrak{k}}
\newcommand{\ft}{\mathfrak{t}}
\newcommand{\fs}{\mathfrak{s}}
\newcommand{\fX}{\mathfrak{X}}
\newcommand{\tr}{\operatorname{tr}}
\newcommand{\id}{\text{id}}
\newcommand{\del}{\delta}
\newcommand{\Del}{\Delta}
\renewcommand{\phi}{\varphi}
\newcommand{\dom}{\backslash}
\newcommand{\tbt}[4]{\left(\begin{array}{cc} #1&#2\\#3&#4 \end{array}\right)}
\newcommand{\Vol}{\text{Vol}}
\newcommand{\BAF}{\BA_F}
\newcommand{\BAFf}{\BAF^f}
\newcommand{\BAE}{\BA_E}
\renewcommand{\det}{\mathrm{det}}
\newcommand{\norm}{{\rm Nm}}
\newcommand{\GalEF}{\Gamma_{E/F}}
\newcommand{\Aut}{L^2_{\mathrm{disc}}}
\newcommand{\Idisc}{I_{\mathrm{disc}}}
\newcommand{\Rdisc}{R_{\mathrm{disc}}}
\newcommand{\Rdisci}{R_{{\mathrm{disc}}, \psi}}
\newcommand{\Idisci}{I_{{\mathrm{disc}}, \psi}}
\newcommand{\Idiscintro}{I_{{\mathrm{disc}}, \psi}}
\newcommand{\Idisctc}{I_{\mathrm{disc},t,c}}
\newcommand{\SHdisci}{{S}^H_{\text{disc},\psi}}
\newcommand{\SHdiscintro}{{S}^H_{\text{disc},\psi}}
\newcommand{\SHpsidisci}{{S}^{H_\psi}_{\text{disc},\psi}}
\newcommand{\Heckev}{\mH(G_v)}
\newcommand{\GN}{G(N)}
\newcommand{\Psiqs}{\Psi(G^*, \emb)}
\newcommand{\emb}{\eta_{\kappa}}
\newcommand{\embend}{\xi_{\underline{\kappa}}}
\newcommand{\bcend}{\eta_{\kappa \cdot \underline{\kappa}}}
\newcommand{\PsiG}{\Psi(G,\xi)}
\newcommand{\disc}{\mathrm{\disc}}
\newcommand{\pa}[2]{\langle #1, #2 \rangle} 
\newcommand{\Spsi}{\mathcal{S}_\psi} 
\newcommand{\G}{G} 
\newcommand{\U}{G} 
\newcommand{\cp}{\fp}
\author[M. Gerbelli-Gauthier]{Mathilde Gerbelli-Gauthier}
\email{mathilde.gerbelli-gauthier@mcgill.ca}
\address{Department of Mathematics and Statistics, McGill University}
\title[Limit Multiplicity and The Stable Trace Formula]
{Limit Multiplicity For Unitary Groups and The Stable Trace Formula}
\begin{document}

\begin{abstract}
We give upper bounds on limit multiplicities of certain non-tempered representations of unitary groups $U(a,b)$,  conditionally on the endoscopic classification of representations. Our result applies to some cohomological representations, and we give applications to the growth of cohomology of cocompact arithmetic subgroups of unitary groups. The representations considered are transfers of products of characters and discrete series on endoscopic groups, and the bounds are obtained using Arthur's stabilization of the trace formula and the classification established by Mok and Kaletha-Minguez-Shin-White.
\end{abstract}

\maketitle 

\tableofcontents

\section{Introduction.}

Let $G$ be a semisimple Lie group and let $\Gamma \subset G$ be an arithmetic lattice. Such a group is an analogue of the ``$\BZ$-points of $G$": is realized as the intersection $\mG(\BQ) \cap K \subset \mG(\BA_f)$ for a choice of algebraic group $\mG/\BQ$ such that $\mG(\BR) = G$ and $K$ a compact-open subgroups of the finite adelic points of $G$. A natural invariant to study is the group cohomology $H^i(\Gamma,\BC)$. Yet beyond some low-rank examples, the dimension of this cohomology has only been computed explicitly for specific instances of $\Gamma$, for example \cite{CT20, GMY21}. A variant of the problem is to study this question \emph{in towers}: one studies the asymptotic properties of $\dim H^i(\Gamma_n, \BC)$ for sequences $\Gamma_n$ of nested subgroups as $n \to \infty$, see for example \cite{ CE, CM13, MaSh18}.

A central family of such sequences $\Gamma_n$ are congruence towers $\Gamma(p^n)$. These are obtained by fixing a suitable prime $p$ and considering sequences of subgroups $K(p^n) = K^pK_p(p^n)$.  The group $K^p$ is a fixed compact-open subgroup of $G(\BA_f^p)$, the finite adelic points away from $p$, and \[K_p(p^n) = \mG(\BQ_p) \cap \{g \in GL_n(\BZ_p) \mid g \equiv I \mod p_n\}\] for a choice of embedding $\mG(\BQ_p) \hookrightarrow GL_n(\BQ_p)$. The resulting nested sequence of subgroups $\Gamma(p^n) = \mG(\BQ) \cap K(p^n)$ are referred to as principal $p$-power congruence towers.

This article is motivated by the study of rates of growth of~$\dim H^i(\Gamma(p^n), \BC)$ as~$n$ grows, for $\Gamma(p^n)$ cocompact. 

These dimensions can be expressed representation-theoretically by using Matsushima's formula \cite{Ma67}:
 \[ H^i(\Gamma(p^n), \BC) = \bigoplus_{\pi}m(\pi, p^n)H^i(\fg,K;\pi). \] Here the sum is taken over isomorphism classes of unitary representations of~$G$, the number~$m(\pi, p^n)$ is the multiplicity of~$\pi$ in the regular representation of~$G$ on~$L^2(\Gamma(p^n) \dom G)$, and $H^i(\fg,K;\pi)$ is the $(\fg,K)$-cohomology of $\pi$. Following the work of Vogan-Zuckerman \cite{VZ}, the finitely many  representations contributing non-trivially to the above sum are well-understood. Thus the question is reduced to the growth of multiplicities~$m(\pi, p^n)$ of cohomological representations.

Multiplicity growth rates are best understood for discrete series representations, which contribute to cohomology only in the middle degree. In that case, DeGeorge-Wallach \cite{DGW} and later Savin \cite{Sa89} have shown that $m(\pi, p^n)$ grows proportionally to the index $[\Gamma(1): \Gamma(p^n)]$. This leaves open the question of multiplicity growth for cohomological representations in lower degrees. In general, these are non-tempered, and DeGeorge-Wallach show that their multiplicities $m(\pi, p^n)$ satisfy \[m(\pi, p^n)/[\Gamma(1):\Gamma(p^n)] \xrightarrow[n \to \infty]{} 0.\] 
Sarnak-Xue \cite{SX91} have predicted upper bounds on growth, interpolating between the rate for discrete series and the constant multiplicity of the trivial representation. Here ``$f(n) \ll g(n)$" means that for $n$ large enough, $f(n)$ is bounded by a constant multiple of $g(n)$, and $\ll_{\epsilon}$ indicates that the implied constants depends on $\epsilon$.
\begin{conjecture}(Sarnak-Xue) \label{conj SX} Let $\pi$ be a unitary representation of $G$ and let \[p(\pi) = \inf\{p \geq 2\,|\, \text{ the $K$-finite matrix coefficients of $\pi$ are in $L^p(G)$}\}.\] Then \[ m(\pi,p^n) \ll_{\epsilon} [\Gamma(1): \Gamma(p^n)]^{\frac{2}{p(\pi)}+\epsilon}. \] 
\end{conjecture}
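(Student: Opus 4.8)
The plan is to bound the multiplicity $m(\pi,\fp^n)$ spectrally through Arthur's trace formula for the relevant unitary group $G = U(a,b)$ and its stabilization, following the template for limit-multiplicity estimates. Fix a cohomological representation $\pi = \pi_\infty \otimes \pi^\infty$; rather than isolating $\pi$ exactly I would work with a test function $f_n = f_\infty \otimes \mathbf{1}_{\Kfpn}$, where $\mathbf{1}_{\Kfpn}$ is the characteristic function of the principal congruence subgroup of level $\fp^n$ and $f_\infty$ is an Euler--Poincar\'e / pseudo-coefficient type function attached to the cohomological archimedean packet of $\pi_\infty$, so that $\tr \pi'_\infty(f_\infty)$ is supported on a fixed finite set of representations and, for the $\pi$ under consideration, its contribution dominates $m(\pi,\fp^n)$ up to a constant. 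One then has $m(\pi,\fp^n) \leq C\cdot \Idisc(f_n) + (\text{error})$, and the goal becomes an upper bound on $\Idisc(f_n)$ of the right shape together with control of the error.

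Next I would apply Arthur's stabilization, $\Idisc(f_n) = \sum_{G'} \iota(G,G')\, S^{G'}_{\mathrm{disc}}(f_n^{G'})$, expressing the discrete part of the trace formula for the inner form $G$ through stable distributions on $G$ and on its elliptic endoscopic groups $G'$, which for unitary groups are products of smaller unitary groups. The reason this helps is the endoscopic classification of Mok (for quasi-split unitary groups, via the twisted comparison with $\mathrm{GL}_N$) and of Kaletha--Minguez--Shin--White (for the pure inner forms): the discrete automorphic spectrum of $G$ is organized by Arthur parameters $\psi$, and a non-tempered cohomological $\pi$ of the type considered here --- a transfer of a product of characters and discrete series from an endoscopic group $H$ --- lies in the packet of a parameter $\psi$ whose $\mathrm{SL}_2$-part is nontrivial and which factors through the datum $H$. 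The stable multiplicity formula then matches the contribution of the packet of $\pi$ to $\Idisc(f_n)$ with a stable trace $S^{H}_{\mathrm{disc}}(h_n)$ on $H$, where $h_n$ is the (Langlands--Shelstad--Kottwitz) endoscopic transfer of $f_n$ and can be taken to again have level $\fp^n$.

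The estimate then reduces to bounding $S^{H}_{\mathrm{disc}}(h_n)$, which I would do via the spectral side of the (stable) trace formula on $H$. Writing $H = \prod_i H_i$ and $\psi = \prod_i \psi_i$ accordingly, one only needs to count automorphic representations $\sigma = \boxtimes_i \sigma_i$ of $H$ with level $\fp^n$ such that each $\sigma_{i,\infty}$ lies in the prescribed cohomological packet: on the factors where $\psi_i$ is a (Speh-type) character the archimedean constraint forces $\sigma_i$ to be essentially one-dimensional, so those factors contribute only a lower-order (class-number-type) count, while on the factors where $\psi_i$ is a discrete series the DeGeorge--Wallach--Savin count applies and contributes a growing factor. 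The upshot is that $S^{H}_{\mathrm{disc}}(h_n)$, hence $m(\pi,\fp^n)$, is at most a fixed power of $[\Gamma(\fp^n):\Gamma(1)]$ determined by the tempered part of $\psi$; the content of the argument is that, via the Adams--Johnson description of $\pi_\infty$ --- which expresses the decay rate of the $K$-finite matrix coefficients, and hence $p(\pi)$, in terms of precisely this tempered part --- the exponent one obtains is $2/p(\pi)$, with $\epsilon$ absorbing the gap between volume growth and index growth.

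The hard part is twofold. First, one must make the passage from $m(\pi,\fp^n)$ to the stable side uniform enough that the geometric side of the trace formula on $G$ (orbital integrals of $f_n$) and the remaining spectral terms are genuinely $O\big([\Gamma(\fp^n):\Gamma(1)]^{2/p(\pi)+\epsilon}\big)$; this requires uniform bounds on orbital integrals in the level aspect of the type developed by Finis--Lapid--M\"uller, adapted to the non-tempered Euler--Poincar\'e functions used here, together with a careful treatment of the signs and linear combinations built into $f_\infty$ so that the bound is an honest upper bound rather than a signed identity. Second, and more essentially, identifying which endoscopic datum $H$ and which stable parameter on $H$ accounts for a given cohomological $\pi$, and verifying that the stable multiplicity formula together with the structure of Adams--Johnson packets matches the tempered part of $H$ to $2/p(\pi)$, is the delicate heart of the matter: it is exactly here that the classifications of Mok and of Kaletha--Minguez--Shin--White are indispensable, and it is why, with this method, one should expect the conjectured bound for those $\pi$ that are transfers of products of characters and discrete series rather than for all cohomological representations.
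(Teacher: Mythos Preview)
The statement is a \emph{conjecture}, not a theorem: the paper does not prove it in general, and indeed your own closing sentence concedes that the method you outline only yields the bound for those $\pi$ that are endoscopic transfers of products of characters and discrete series. So as a proof of Conjecture~\ref{conj SX} the proposal has a genuine gap at the outset --- there is nothing to compare to, because the paper establishes only the special case recorded in the final corollary of \S\ref{subsection Sarnak-Xue.}.

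For that special case your outline is in the right spirit but diverges from the paper in two substantive ways. First, you anticipate needing Finis--Lapid--M\"uller type control of the geometric side; the paper never touches the geometric side. It works parameter-by-parameter on the spectral side: regularity of the infinitesimal character together with the Bergeron--Clozel result (Theorem~\ref{Bergeron-Clozel on parameters with reg inf char}) gives $I_{\mathrm{disc},\psi}=R_{\mathrm{disc},\psi}$, so the Levi terms drop out and no orbital-integral estimate is needed. Second, you flag the sign problem (``an honest upper bound rather than a signed identity'') but do not say how to resolve it. The paper's key maneuver is Theorem~\ref{main theorem}: among the elements $s_H\in\mathcal{S}_\psi$ indexing the endoscopic contributions, the distinguished element $s_\psi=\psi(1,-I)$ produces \emph{all positive} coefficients $\epsilon_\psi(s_\psi^2)\langle s_\psi^2,\pi\rangle=+1$, so its stable term dominates every other summand once the test function has nonnegative traces. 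This singles out the ``dominant'' endoscopic group $H_\psi$ determined purely by $\psi\!\mid_{SL_2}$, and it is this positivity trick --- not any cancellation or absolute-value estimate --- that converts the stabilization into an inequality.

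Two further ingredients you omit are essential for turning the bound on $S(\psi,s_\psi,f)$ into an actual count on $H_\psi$: Ferrari's fundamental lemma for congruence subgroups (Theorem~\ref{Ferrari}), which identifies the transfer of $1_{K(\fp^n)}$ explicitly and produces the power-of-$\mathrm{Nm}(\fp^n)$ saving, and the hyperendoscopy formalism (\S\ref{section on hyperendoscopy}), which unwinds the inductive definition of $S^H_{\mathrm{disc},\psi}$ back into honest $I$-distributions on smaller groups where Savin's limit multiplicity applies. Without these two pieces the passage from ``stable trace on $H$'' to ``Savin-type count on $H$'' that you describe in your third paragraph does not go through.
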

 By definition, the representation~$\pi$ is tempered if~$p(\pi) =2$. Thus Sarnak-Xue expect the failure of temperedness to dictate the rate of growth of~$m(\pi, p^n)$. 
\subsection{Main Theorem}
In this article, we give upper bounds on the multiplicity growth of certain cohomological representations of unitary groups. The results are conditional on the endoscopic classification of representations, as discussed in \ref{sectiononconditionality}.  Let~$E/F$ be a CM extension of number fields with~$F \neq \BQ$,  and~$\cp$ a prime of~$F$ such that the cardinality $\norm(\cp)$ of the residue field is large enough, see Section \ref{Choice of Test Functions}. Let~$a\leq \frac{N}{2}$ and let ~$G$ be a unitary group defined from a Hermitian form of signature $((a,N-a),(N,0),...,(N,0))$ relative to~$E/F$. Finally, let~$\Gamma(\cp^n)$ be a sequence of principal level cocompact lattices in~$G$, defined in Section \ref{section on limit multiplicity}. Denote by $\nu(n)$ the unique $n$-dimensional irreducible representation of $SL_2(\BC)$. Our main theorem concerns cohomological representations $\pi$ of $G$ satisfying the following two conditions: 

\begin{itemize}
	\item[(i)] $\pi$ belongs to a cohomological Arthur packet associated to a parameter $\psi$ with $\psi \mid_{SL_2(\BC)} = \nu(2k) \oplus \nu(1)^{N-2k}$;
	\item[(ii)] $\pi$ does not appear in any other Arthur packet.
\end{itemize}
Such $\pi$ are endoscopic transfers of products of discrete series with characters, from endoscopic groups of $G$ of the form $U(2k) \times U(N-2k)$. They exist if $a+k\geq N/2$. For example, they include a family $\pi_k$ described in Section \ref{section on cohomology} and which contributes to cohomology in degrees \[i = i(N,a,k) = \begin{cases}
\left(\frac{N-2}{2}\right)^2 + 2a -k^2 & \text{$N$ even} \\ \left(\frac{N-1}{2}\right)^2 + a -k^2 & \text{$N$ odd}.
\end{cases}\]

We recall that $f(n) \ll g(n)$ means that for $n$ large enough, $f$ is bounded by a constant multiple of $g$. 

 \begin{theorem}\label{main theorem intro} Assume the endoscopic classification of representations for unitary groups stated in \cite{KMSW}. Let $\Gamma(\cp^n)$ be a tower of principal level cocompact lattices in $G = U(a,N-a)$, such that the size $\norm(\cp)$ of the residue field is large enough. Let $\frac{N}{2} > k\geq \frac{N}{2}-a$, and let $\pi$ be a cohomological representation of $G$ satisfying properties (i) and (ii) above. Then \[ m(\pi, \cp^n) \ll \norm(\cp^n)^{N(N-2k)}.  \] In particular, Conjecture \ref{conj SX} holds for $\pi$. 
\end{theorem}

Our method of proof leads us to believe that these bounds are sharp, in the sense that one should be able to achieve them for a suitable choice of lattices. Indeed, our strategy is to decompose the multiplicity count and show that the leading term comes from a smaller group for which exact asymptotics are known. We expect that the other terms can be made to oscillate and not contribute in the limit. For $G = U(2,1)$, this type of method was carried out successfully by Simon Marshall \cite{Ma14}. 

Our representations do not account for all of the cohomology, but in some low degrees, we expect them to do so asymptotically. For example, the smallest nonzero degree~$i$ for which~$H^i(\Gamma(\cp^n), \BC)$ is nontrivial is~$i=a$. When~$N$ is odd, the representations associated to $k = \frac{N-1}{2}$ contribute asymptotically all the cohomology in degree~$a$, yielding the following bounds. 

\begin{corollary}
	Under the assumptions of Theorem \ref{main theorem intro}, assume additionally that $N$ is odd. Then \[ \dim H^a(\Gamma(\cp^n), \BC) \ll \norm(\cp^n)^{N}. \]
\end{corollary}

In order to describe the more general range of degrees in which we predict that the representations we can control contribute all the cohomology, we state our main technical theorem.  It concerns bounds on limit multiplicity for representations belonging to a prescribed archimedean Arthur packet. This result does not require that the representations be cohomological, and our most general limit multiplicity result is the following. 

\begin{theorem} 
Under the assumptions of Theorem \ref{main theorem intro}, let $\psi_{\infty}$ be an Arthur parameter with regular infinitesimal character and such that $\psi_{\infty}\mid_{SL_2(\BC)} = \nu(2k) \oplus \nu(1)^{N-2k}$. Let $\pi_\infty \in \Pi_{\psi_\infty}$, and let $\Psi_{\psi_{\infty}, 1}$ be the set of Arthur parameters for $G$ whose specialization at infinity is $\psi_{\infty}$, and associated to representations with trivial central character. Then \begin{equation}  \sum_{\psi \in \Psi_{\psi_{\infty}, 1}} \sum_{\pi = \pi_{\infty}\ten \pi_f   \in \Pi_{\psi}} m(\pi) \dim \pi_f^{K(\cp^n)} \ll \norm(\cp^n)^{N(N-2k)},  \end{equation}
	where $m(\pi)$ denotes the multiplicity of $\pi$ in $\Aut(G(F)\dom \G(\BAF), 1)$. 
\end{theorem}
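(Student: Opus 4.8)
The plan is to bound the spectral quantity on the left-hand side by isolating it inside the discrete part of the trace formula, and then to pass to the stable side via Arthur's stabilization together with the endoscopic classification of Mok and Kaletha--Minguez--Shin--White. First I would choose a test function $f = f_\infty \otimes f^{\fp,\infty} \otimes f_{\fp,n}$ where $f_{\fp,n}$ is (a normalization of) the idempotent attached to $K(\fp^n)$, $f^{\fp,\infty}$ is the unit in the spherical Hecke algebra away from $\fp$ and $\infty$, and $f_\infty$ is an Euler--Poincar\'e-type or pseudo-coefficient-type function pinned down so that $\tr \pi_\infty(f_\infty)$ picks out exactly the archimedean packet $\Pi_{\psi_\infty}$ with the prescribed infinitesimal character (this is where the regularity hypothesis enters, guaranteeing that only finitely many $\psi$ contribute and that the archimedean component is suitably isolated). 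With this choice, $\Rdisc(f)$ is, up to positive constants, an upper bound for $\sum_{\psi}\sum_{\pi\in\Pi_\psi} m(\pi)\dim\pi_f^{K(\fp^n)}$ — one has to be slightly careful that all terms appearing are nonnegative so that discarding the ``non-$\psi_\infty$'' contributions and the continuous spectrum is legitimate, which is the role of choosing $f_\infty$ stable/positive on the relevant packet.

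Next I would invoke Arthur's stabilization: $\Idisc(f) = \sum_{G'} \iota(G,G')\, \SGdisci(f^{G'})$ summed over elliptic endoscopic data $G'$ of $G = U(N-a,a)$, where $f^{G'}$ is the Langlands--Shelstad--Kottwitz transfer of $f$. The endoscopic groups relevant to a parameter with $\psi_\infty|_{SL_2} = \nu(2k)\oplus\nu(1)^{N-2k}$ are (the quasi-split inner form of) $G$ itself and the group $U(2k)\times U(N-2k)$, so the sum is finite and short. On each $\SHdisci$ I would use the description of the stable discrete spectrum in terms of stable parameters — here the parameters of interest become, on $U(2k)\times U(N-2k)$, products of a discrete-series parameter with a one-dimensional (character) parameter — and bound the resulting stable multiplicities by counting such parameters with fixed infinitesimal character and trivial (or prescribed) central character. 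The transfer $f^{G'}$ at $\fp$ is again an explicit Hecke-type function whose relevant trace is controlled by $\dim(\text{transfer})^{K'(\fp^n)}$, and the count of parameters is essentially a count of automorphic characters of $U(1)$-type and discrete automorphic forms on $U(2k)$ of bounded level and bounded infinitesimal character.

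The heart of the estimate, and the step I expect to be the main obstacle, is the local volume/level computation at $\fp$: one must show that for each relevant endoscopic $G'$ the quantity $\dim \pi'^{K'(\fp^n)}$ (or the trace of the transferred idempotent), summed against the number of contributing parameters, is $O(\norm(\fp^n)^{N(N-2k)})$. The exponent $N(N-2k)$ should emerge as the dimension of an appropriate partial flag variety / the degree of a Hecke module attached to the $SL_2$-type $\nu(2k)\oplus\nu(1)^{N-2k}$ — concretely, the parameters that survive are those ``built from'' $U(2k)\times U(N-2k)$, whose level-$\fp^n$ invariants grow like the index $[\,K : K(\fp^n)\,]$ for that smaller group, which one computes to be of size $\norm(\fp^n)^{\dim U(N)-\dim(U(2k)\times U(N-2k))} = \norm(\fp^n)^{2k(N-2k)}$ for the ``new'' part, times the growth $\norm(\fp^n)^{(N-2k)^2 + \dots}$ of forms on the $U(N-2k)$ factor — and bookkeeping these contributions so the total matches $N(N-2k)$ is the delicate combinatorial/representation-theoretic point. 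I would also need the ``$\norm(\fp)$ large enough'' hypothesis to ensure that $f_{\fp,n}$-transfers are supported as expected and that no small-prime pathologies (e.g.\ ramification of the endoscopic data, failure of the fundamental lemma normalizations) intrude. Once the archimedean isolation, the finite endoscopic decomposition, and this $\fp$-adic growth bound are in place, assembling them gives the stated inequality.
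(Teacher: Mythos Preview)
Your overall architecture---choose a factorizable test function with a congruence idempotent at $\fp$ and a packet-isolating function at infinity, use regularity of the infinitesimal character to equate $\Rdisci$ with $\Idisci$, then stabilize---matches the paper. But two genuine ingredients are missing.

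First, you assert that only $G$ and $U(2k)\times U(N-2k)$ appear in the stabilization. This is false: a global $\psi$ with $\psi|_{SL_2} = \nu(2k)\oplus\nu(1)^{N-2k}$ has the shape $(\mu_0\boxtimes\nu(2k))\boxplus(\mu_1\boxtimes\nu(1))\boxplus\cdots\boxplus(\mu_r\boxtimes\nu(1))$ and factors through \emph{every} $U(N_1)\times U(N_2)$ compatible with some splitting of the simple summands; the group $\mS_\psi$ can have order up to $2^{N-2k}$. The stable multiplicity formula then expresses each $\SHdisci(f^H)$ as a signed sum $C(\psi,s_H)\sum_{\pi\in\Pi_\psi}\epsilon_\psi^{G^*}(s_\psi s_H)\langle s_\psi s_H,\pi\rangle\tr\pi(f)$, and you cannot simply bound a signed sum term by term. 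What the paper does---and what your sketch lacks---is observe that for the distinguished element $s_H = s_\psi$ all the signs are $+1$ (since $s_\psi^2=1$ in $\mS_\psi$), so that, once $\tr\pi(f)\ge 0$ for all $\pi\in\Pi_\psi$, the single term $S(\psi,s_\psi,f)$ attached to $H_\psi = U(2k)\times U(N-2k)$ dominates the entire sum up to a uniformly bounded constant. This positivity trick is the heart of the reduction and is not in your proposal.

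Second, having isolated $S(\psi,s_\psi,f)$, you still need to convert this stable distribution on $H_\psi$ into an honest multiplicity count. The paper does this by \emph{hyperendoscopy}: iterating the stabilization on $H_\psi$ and its own endoscopic groups expresses $S(\psi,s_\psi,f)$ as a finite alternating sum of $I^{H_q}_{\mathrm{disc},\psi}(f^{H_q})$ over hyperendoscopic $H_q$. The local input at $\fp$ is not a flag-variety dimension but Ferrari's fundamental lemma for congruence subgroups, which says the transfer of the normalized idempotent is $\norm(\fp)^{-n\cdot d(G,H_q)}$ times the corresponding idempotent on $H_q$, with $d(G,H_q)=\tfrac{1}{2}(\dim G-\dim H_q)$; this is where the ``$\norm(\fp)$ large enough'' hypothesis enters. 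Each $I^{H_q}_{\mathrm{disc},\psi_\infty}$ is then bounded factor by factor: Savin's limit multiplicity theorem on the tempered factor, a uniform bound for the character factor, and a count of compatible central characters of level $\fp^n$. The exponent you guess, $d(G,H_\psi)+\dim U(N-2k) = 2k(N-2k)+(N-2k)^2 = N(N-2k)$, is exactly what emerges, but the mechanism is these two missing steps rather than a direct stable-multiplicity estimate.
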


These types of Arthur parameters seem to control the growth of certain degrees of cohomology. The combinatorics of intersections between various Arthur packets rapidly get complicated, but here is a sample of behavior we expect:
\begin{conjecture}
	Let $G=U(N-a,a)$ be as above. Then 
	\begin{itemize}
		\item[(i)] the representations belonging to Arthur packets attached to parameters $\psi$ with $\psi \mid_{SL_2} = \nu(N-\ell)\oplus \nu(1)^{\ell}$ contribute asymptotically all the cohomology in degrees $a \cdot \ell $ for $0\leq \ell \leq N-2(a-1)$, 
		\item[(ii)] for these degrees, \[\dim H^{a\cdot\ell}(\Gamma(\cp^n), \BC) \ll \norm(\cp^n)^{N\cdot \ell}. \]
	\end{itemize}

\end{conjecture}
The range of degrees to which the conjecture applies is larger for smaller values of~$a$, i.e. when $G$ is farther from being quasisplit. For $a=1$, Marshall-Shin \cite{MaSh18} proved (ii) under some assumptions on $\cp$, and conjectured (i).

\subsubsection{Outline of the Proof} The results are proved in the framework of endoscopy, Arthur parameters, and the stable trace formula. 
The theorem is a consequence of the endoscopic classification of representations for unitary groups. The classification is a result of Mok \cite{Mok} if the group $G$ is quasisplit, and of Kaletha--Minguez--Shin--White \cite{KMSW} for inner forms, building on the seminal work of Arthur \cite{A}.  It gives a decomposition of the regular representation of~$G(\BAF)$ on the discrete spectrum: \[\Aut(G(F) \dom G(\BAF)) \simeq \bigoplus_\psi \bigoplus_{\pi \in \Pi_{\psi}} m(\pi) \pi\] where the irreducible summands~$\pi = \ten_v' \pi_v$ are automorphic representations; they appear in the discrete spectrum with multiplicity~$m(\pi)$. This decomposition is given in terms of Arthur packets~$\Pi_{\psi}$ indexed by Arthur parameters~$\psi$. These parameters are formal objects \[\psi = \boxplus_i (\mu_i \boxtimes \nu(m_i)) \] where each $\mu_i$ is a cuspidal automorphic representation of $GL_{n_i}$ and $\nu(m_i)$ is the unique irreducible $m_i$-dimensional representation of $SL_2(\BC)$. Such a~$\psi$ is associated to a packet of representations of a unitary group of rank~$N$ if~$\sum_i n_im_i = N$ and if~$\psi$ is self-dual in a suitable sense. The parameters stand in for homomorphisms \[ \psi: L_F \times SL_2(\BC) \to {^L}G \] where~${^L}G$ is the~$L$-group of~$G$ and~$L_F$ is the Langlands group of~$F$, an object whose existence is at the present moment only hypothetical. Despite Arthur parameters being purely formal objects, one can consider the restriction~$\psi\mid_{SL_2(\BC)} := \oplus_i \nu(m_i)^{n_i}$ which is an actual finite-dimensional representation. The classification of parameters in terms of this restriction plays a central role in our argument, and we refer to the group $SL_2(\BC)$ used to build Arthur parameters as the ``Arthur $SL_2$". 

Endoscopy is a specific instance of the principle of functoriality in the Langlands program. It concerns certain groups~$H$, the so-called endoscopic groups of $G$, and states that if~$\psi$ factors through an embedding~${^L}H \hookrightarrow {^L}G$, then there must be trace identities between the characters of the representations~$\pi \in \Pi_{\psi}$ and those of representations~$\pi^H$ of~$H$ in a corresponding packet~$\Pi_\psi^H$. 
The character identities are witnessed through the trace formula~$\Idiscintro(f)$. In the case of our parameters with regular infinitesimal character, this distribution computes the trace of convolution by a smooth, compactly supported function~$f$ on the subspace of~$\Aut$ spanned by the representations $\pi \in \Pi_{\psi}$. More specifically, the character identities appear in a decomposition of $\Idiscintro(f)$ referred to as the stabilization of the trace formula (written here in a simplified version for exposition purposes): \begin{equation}\label{Stable Trace Formula Intro} \Idiscintro(f) = \sum_{H} \SHdiscintro(f^H).\end{equation} Here the sum runs over all endoscopic groups $H$ such that $\psi$ factors through ${^L}H$. The distributions $\SHdiscintro(f^H)$ are stable, meaning that they satisfy a strengthening of the conjugacy-invariance property of characters of representations. 

The summands $\SHdiscintro(f)$, initially defined inductively, can be expanded explicitly as linear combinations of traces $\tr \pi (f)$ of the representations $\pi \in  \Pi_{\psi}$: this is the so-called stable multiplicity formula. We write here a simplified version of the stable multiplicity formula in which we have omitted constants which can be ignored in the asymptotic questions we are concerned with: \begin{equation} \label{STFintro} S^H_{\text{disc}, \psi}(f^H) = \sum_{\pi \in \Pi_{\psi}} \xi(\pi, H)\tr \pi (f). \end{equation} The coefficients $\xi(\pi, H)$ arise from characters of a $2$-group $\mS_{\psi}$, the group of connected components of the centralizer of the image of $\psi$. More precisely, there are two mappings \begin{align*}
	\{\text{representations } \pi \in \Pi_{\psi}\} &\to \{ \text{characters of $\mS_{\psi}$} \} \\ 
	\{ \text{$H$ such that $\psi$ factors through }{^L}H \} & \to \{\text{elements of }\mS_{\psi} \},
\end{align*}
the second of which is a bijection. In this way, the coefficient $\xi(\pi,H)$ in the decomposition of the stable term $\SHdiscintro(f^H)$ is the value of the character associated to $\pi$ on the group element corresponding  to $H$.\\

In this context, the steps of the proof of Theorem \ref{main theorem intro} can be outlined as: 
\begin{enumerate}
	\item[(i)] (\S \ref{Arthur Packets of Cohomological Representations}) Determine the parameters $\psi$ associated to the packets containing cohomological representations. This relies on work of Arthur~\cite{AUC} and Adams-Johnson~\cite{AJ}.  
	\item[(ii)] (\S \ref{Choice of Test Functions}) Write the dimension of cohomology as $\sum_{\psi} \Idiscintro(f(\cp^n))$ for a specific function $f(\cp^n)$, summing over the parameters $\psi$ computed in the first step.  
	\item[(iii)] (\S \ref{Stabilization}) Fix a cohomological parameter $\psi$.  Use the stabilization of the trace formula to decompose \[\Idiscintro(f(\cp^n)) = \sum_H \SHdiscintro(f(\cp^n)^H). \] 
	\item[(iv)] (\S \ref{Bounds by the Dominang Group}) By interpreting the coefficients $\xi(\pi, H)$ appearing in the stable multiplicity formula~\eqref{STFintro} as values of characters of~$\mS_{\psi}$, conclude that there is a specific endoscopic group~$H_\psi$ whose contribution bounds that of all the others in \eqref{Stable Trace Formula Intro}, i.e. such that \[ \Idisci(f(\cp^n)) \leq K(\psi) \SHpsidisci(f(\cp^n)^{H_\psi}) \] for a constant $K(\psi)$ computed in terms $\psi\mid_{SL_2(\BC)}$ and of the number of irreducible summands of $\psi$, and which can be uniformly bounded in terms of the rank $N$ of the unitary group. The group $H_\psi$ depends only on $\psi\mid_{SL_2(\BC)}$. As such it is determined by the parameter $\psi_{\infty}$ and ultimately by the choice of cohomological representations. 
	\item[(v)] (\S \ref{section on hyperendoscopy} and \S \ref{section on our limit multiplicity results}) Bound the stable trace $\SHpsidisci(f(\cp^n)^{H_\psi})$ in terms of the multiplicity $m(\pi^{H_\psi}, \cp^n)$ for a family $\pi^{H_\psi}$ of representations of $H_\psi$. This relies on the fundamental lemma, proved by Laumon-Ng\^o for unitary groups~\cite{LN}, but also on a variant for congruence subgroups due to Ferrari \cite{Fe}. In order to control the discrepancy between~$S^{H_\psi}_{\text{disc}, \psi}$ and~$I^{H_\psi}_{\text{disc}, \psi}$, we make use of the notion of hyperendoscopy, also introduced by Ferrari. 
	\item[(vi)] (\S \ref{section on others' limit multiplicity results} and \S \ref{last section on limit multiplicity}) The representations~$\pi^{H_\psi}$ obtained via steps (i)-(v) from parameters such that $\psi \mid_{SL_2(\BC)} = \nu(2k) \oplus \nu(1)^{N-2k}$ are the product of a discrete series representation and a character. Their limit multiplicity is thus known by results of Savin~\cite{Sa89}, which gives the desired bounds. 
\end{enumerate}

\begin{remark}
	Some comments on possible extensions of the result: the proof exploits the fact that for a global $A$-parameter $\psi$, the restriction $\psi\mid_{SL_2(\BC)}$ is determined locally at any place. Here, archimedean restrictions associated to cohomological representations propagate to global and everywhere-local restrictions and induce slow rates of growth. But there is nothing special about infinity: similar methods could provide information about automorphic representations which belong to Arthur packets with large Arthur $SL_2$.

	The endoscopic classification was of course proved by Arthur~\cite{A} for quasisplit orthogonal and symplectic groups, and Ta\"ibi~\cite{T} extended the key result used here, namely the stable multiplicity formula, to some classes of inner forms. It is likely the case that similar methods could be a good starting point to provide analoguous bounds for these groups. 
	
	The restrictions on the types of representations we deal with are rooted in restrictions on the Arthur parameters we consider. These have two simple pieces which witness opposite extreme behaviors when restricted to the Arthur $SL_2$. This allows us to obtain bounds by ``applying endoscopy once". To extend the results to e.g. representations associated to global parameters with an arbitrary number of simple pieces, one could iterate the inductive process of steps (iv) and (v) and bound representations coming from hyperendoscopic groups, i.e. endoscopic groups of endoscopic groups. 
\end{remark}

Our proof method is in the lineage of a body of recent work applying the framework of endoscopy to growth of cohomology. Most notably, bounds on multiplicity growth of all non-tempered cohomological representations were obtained by Marshall~\cite{Ma14} for~$G = U(2,1)$, and Marshall-Shin~\cite{MaSh18} for~$G = U(N,1)$ and a level~$\fn$ divisible by primes splitting in the CM extension used to define the unitary group.

\subsection{Conditionality} \label{sectiononconditionality}

Our results are conditional on the endoscopic classification of representations for inner forms of unitary groups, a result which remains to be fully proved in several ways. As explained in the introduction of~\cite{KMSW} and in~\cite[\S 2.6]{Mok} the classification depends on upcoming work of Chaudouard-Laumon on the weighted fundamental lemma. It also depends, through its dependency on \cite{A}, on several papers of Arthur not yet made public. Moreover, the proof of the classification in~\cite{KMSW} is not itself complete: in particular, the results appearing here as Theorem~\ref{local character identities for general unitary groups} and Theorem~\ref{Rdisc computes the traces on the packet.} are only proved for generic parameters. A full proof is expected in~\cite{KMS}. 

\section*{Acknowledgements} 
This work was initially the author's doctoral thesis. She is grateful to her advisor Matt Emerton for many years of conversations, ideas, insights, and support. She also thanks James Arthur, Nicolas Bergeron, Laurent Clozel, Rahul Dalal, Shai Evra, Tasho Kaletha, Colette M{\oe}glin, Sarah Peluse, Peter Sarnak, and Joel Specter for 
helpful conversations, correspondence, and advice. A special thanks to Simon Marshall for an attentive reading that picked up some issues in an earlier version. She also thanks the anonymous referees for useful feedback which led to many improvements and clarifications. Finally, the author is grateful for support from the Natural Sciences and Engineering Research Council of Canada, and was supported by the Charles Simonyi Endowment at the Institute for Advanced Study.

\section{L-groups,  Parameters, and the Trace Formula }

\subsection{Notation} \label{section on notation}Let~$E/F$ be a CM extension of number fields with Galois group~$\Gamma_{E/F}$, algebraic closure~$\bar{F}$ and absolute Galois groups~$\Gamma_F$ and~$\Gamma_E$. We denote places of $F$ and~$E$ by~$v$ and~$w$ respectively, and let~$E_v = E \ten_F F_v$ for~$v$ a place of~$F$. Let~$F_\infty = F \ten_\BQ \BR$, the product of all archimedean completions of~$F$. Let~$\MO_F$ and~$\MO_E$ be rings of integers, and~$\BAF$ and~$\BAE$ be ad\`ele rings, with ~$\norm: \BAE \to \BAF$
the norm map. Let~$\BAFf$ be the finite ad\`eles, so that we have~$\BAF = F_\infty \times \BAFf$.

Fix $\chi_{\kappa}$ for $\kappa \in \{\pm 1\}$, a pair of Hecke characters of~$E$. We fix~$\chi_{+1}$ to be trivial and the character~$\chi_{-1}$ is chosen so that its restriction to~$\BAF/F^\times$ is the quadratic character associated to~$E$ by class field theory.  

If $F$ is a field and $G/F$ is a reductive group, we will denote the center of~$G$ by~$Z_G$ or by~$Z(G)$.  If $F$ is global, we denote~$G(F_v)$ by~$G_v$ and~$G(F_\infty)$ by~$G_\infty$. For~$H \subset G(\BAF)$, we use the notation~$H_f = H \cap G(\BAFf)$.  The complexified Lie algebra of~$G_\infty$ will be denoted~$\fg_\infty$.

\subsection{Unitary Groups and Their $L$-Groups} 

\subsubsection{Quasisplit Unitary Groups} \label{section on quasisplit unitary groups} We now introduce unitary groups and their~$L$-groups, following the exposition of Kaletha-Minguez-Shin-White \cite[\S 0]{KMSW}. Let~$E/F$ be a quadratic algebra: either the CM extension introduced above or one of its localizations~$E_v/F_v$, in which case we have~$E_v \simeq F_v \times F_v$ when~$v$ is split. If this is the case, fix an identification~$E_v = F_v \times F_v$. Let~$\sigma \in \text{Aut}_F(E)$ be the nontrivial element of~$\Gamma_{E/F}$ if~$E$ is a field, and the involution~$\sigma (x,y) = (y,x)$ if~$E = F \times F$. If $E$ is a split quadratic algebra, set $\Gamma_E := \Gamma_F$. Let~$\Phi_N$ be the antidiagonal~$N \times N$ matrix  \begin{equation} \label{Matrix Phi_N} \Phi_N =  \left(\begin{array}{ccccc}
		&& 1\\  &\iddots& \\  (-1)^{N-1} && 
	\end{array}\right).\end{equation}
 Let~$U_{E/F}(N)$ (sometimes denoted~$U(N)$) be the reductive group over ~$F$ with~$U_{E/F}(\bar{F}) \simeq GL_N(\bar{F})$, with Galois action \[ \tau_N(g) = \begin{cases}
\tau(g) & \tau \in \Gamma_E  \\ 
\text{Ad}(\Phi_N)\tau(g)^{-t} & \tau \in \Gamma_F \setminus \Gamma_E
\end{cases}. \] We have~$U_{E/F}(N,E) = GL_N(E)$, and we can identify \begin{equation} \label{definition of the unitary group in coordinates} U_{E/F}(N,F) = \{ g \in GL_N(E) \mid \text{Ad}(\Phi_N)\sigma(g)^{-t} = g \},  \end{equation}
a quasisplit unitary group with maximal (non-split) torus given by the group of diagonal matrices, and a Borel subgroup consisting of upper-triangular matrices. If~$E = F \times F$, we have~$U(N)  \simeq GL_N$ and we fix an isomorphism to identify them.

If~$F$ is global, we consider the various localizations of~$U(N,F)$. If~$v$ splits in~$E$, we have~$U(N,F_v) \simeq GL_N(F_v)$. Otherwise $U(N,F_v)$ is a quasisplit unitary group over~$F_v$, which determines it uniquely up to isomorphism as we shall see below. 

\subsubsection{Inner Forms} \label{section on inner forms}
An inner form of~$U(N)$ is a pair~$(G, \xi)$ consisting of an algebraic group~$G/F$ together with an isomorphism~$\xi: G(\bar{F}) \to U(N, \bar{F})$ such that for all~$\sigma \in \Gamma_F$, the automorphism~$\xi^{-1} \circ \sigma \circ \xi \circ  \sigma^{-1}$ is inner. Though the choice of~$\xi$ is always present, it will sometimes be implicit in our notation. We will denote~$U(N)$ by~$G^*$ when we want to highlight that it is the quasisplit form of~$G$.  
\begin{remark}
	In this article, we always require that the inner forms be groups defined with respect to a Hermitian space over~$E$.
\end{remark}

We now discuss which possible groups~$G$ can arise as inner forms of~$U_{E/F}(N)$ in the cases where~$F$ is local or global.
\subsubsection{Local Inner Forms and the Kottwitz Sign} \label{Kottwitz signs subsection}

If~$v$ is archimedean, the classification of inner forms is well-known: a unitary group over~$F_v = \BR$ is determined by its signature $p+q = N$, with~$U(p,q) \simeq U(q,p)$. Since the notation $U(N)$ is reserved for quasisplit groups, we denote the compact inner form of $U(N,\BR)$ by $U_N(\BR)$. 

For $v$ nonarchimedean, the classification of unitary groups coming from Hermitian forms over quadratic algebras over $F_v$ is due to Landherr~\cite{Lan36}: if~$N$ is odd, there is one class of Hermitian forms up to isomorphism, so the group~$U(N,F_v)$ is the unique unitary group of rank~$N$. If~$N$ is even, there are two isomorphism classes of unitary groups, only one of which (the one containing~$U(N,F_v))$ is quasisplit.

One associates to an inner form $G_v$ of  $U_{E_v/F_v}(N)$ a Kottwitz sign $e(G_v)$. We record the formulas for $e(G_v)$ as computed in \cite{Ko83}.

\begin{itemize}
	\item[-] For $F_v = \BR$, let $q(G_v)$ be half the dimension of the symmetric space associated to the group $G_v$. Then $e(G_v) = (-1)^{q(G_v)-q(G^*_v)}$.  
	\item[-] For $F_v$ non-archimedean, let $r(G_v)$ be the rank of $G_v$. Then $e(G_v)=(-1)^{r(G_v)-r(G_v^*)}$. 
\end{itemize}
Kottwitz proves in \cite{Ko83} that for $G$ defined over a global field, the local signs cancel out and $\prod_v e(G_v) = 1$. 

\subsubsection{Global Inner Forms}
We describe the classification of global forms of unitary groups, following the discussion in Section 0.3.3 of \cite{KMSW}. For $N$ odd, any collection of local inner twists, quasisplit at all but finitely many places, can be realized as the localization of a global inner twist. 

Fo $N$ even, the behavior of the place $v$ in $E$ determines cohomological invariants attached to $G_v$. For each $v$, we have $H^1(\Gamma_{F_v}, G^{*,ad}_v) \simeq \BZ/2\BZ$. If $v$ is split in $E$, the invariant of $G_v$ depends on the division algebra~$D_v$ such that~$G_v = \Res^{D_v}_{F_v}GL_{M_v}$. Since we only consider unitary groups coming from Hermitian forms, this invariant is always~$0$ for us. At finite nonsplit places, the quasisplit group~$U(N)_v$ and its unique inner form correspond respectively to~$0$ and~$1$ in~$\BZ/2\BZ$. At the infinite places, the signature~$(p,q)$ determines the invariant~$\frac{N}{2}+q \in \BZ/2\BZ$. For a collection of local~$G_v$ to come from a global unitary group, the all but finitely many nonzero invariants associated to~$G_v$ must sum up to zero. Consequently, we have: 

\begin{lemma} \label{classification of global inner forms}
	Let~$E/F$ be a CM extension of number fields. There exists an inner form $G$ of~$U_{E/F}(N)$ with any choice of signature at the infinite places. Moreover~$G$ can be chosen to be quasisplit outside of a set of places of size at most~$1$. 
\end{lemma}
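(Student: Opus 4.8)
The plan is to prove Lemma~\ref{classification of global inner forms} by a direct cohomological computation, following the recipe for global inner forms of $U_{E/F}(N)$ recalled just above the statement. Global inner forms of $U_{E/F}(N)$ are classified by elements of a restricted product of local cohomology groups $H^1(\Gamma_{F_v}, G^{*,\mathrm{ad}}_v)$, subject to a single global constraint: a collection of local classes $(c_v)_v$, almost all trivial, is the localization of a global class if and only if $\sum_v \iota_v(c_v) = 0$ in an appropriate target group. In the cases relevant here — unitary groups coming from Hermitian spaces — this reduces, as the preceding discussion explains, to the condition that $\sum_v t_v \equiv 0 \pmod 2$, where $t_v \in \BZ/2\BZ$ is the invariant attached to $G_v$: for $v$ split in $E$ the invariant is $0$ (we only allow $G_v = \Res^{D_v}_{F_v} GL_{M_v}$ with $D_v$ split), for $v$ finite nonsplit it is $0$ or $1$ according to whether $G_v$ is quasisplit, and for $v$ archimedean with signature $(p,q)$ it is $\tfrac{N}{2} + q \bmod 2$ when $N$ is even (and there is no constraint when $N$ is odd).

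First I would treat the odd case, which is immediate from the statement quoted above: for $N$ odd every collection of local inner twists, quasisplit almost everywhere, is realized globally, so one may prescribe any signature at each infinite place and take $G$ quasisplit at every finite place, giving a set of non-quasisplit places of size $0$. Next I would handle the even case. Fix the desired archimedean signatures $(p_v, q_v)$; these contribute a fixed parity $s := \sum_{v \mid \infty}\big(\tfrac{N}{2} + q_v\big) \bmod 2$ to the sum. If $s = 0$, take $G_v$ quasisplit at every finite place; then the total sum is $s = 0$ and the collection is realized by a global $G$ that is non-quasisplit only (possibly) at infinity, hence at no finite place — a set of size $0$ in the finite places, and one checks it is at worst $1$ overall depending on how one counts archimedean places, but in any case $\leq 1$ finite place suffices. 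If $s = 1$, pick any one finite place $v_0$ that is nonsplit in $E$ (such a place exists since $E/F$ is a nontrivial extension, so infinitely many finite places are inert or ramified), and let $G_{v_0}$ be the unique non-quasisplit inner form there, with invariant $1$; at all other finite places take $G_v$ quasisplit. The total sum is then $s + 1 \equiv 0 \pmod 2$, so this collection is the localization of a global inner form $G$, which is non-quasisplit at exactly the one finite place $v_0$ (and at the archimedean places dictated by the chosen signature).

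The two things I would be most careful about are: (a) making precise the claim that the global obstruction is exactly the mod-$2$ sum of the $t_v$ — this is where one must invoke the Kottwitz/Borovoi description of $H^1$ of an adjoint group over a global field, together with the computation of the local invariants recalled above (citing \cite{KMSW}, §0.3.3), rather than re-deriving it; and (b) ensuring a nonsplit finite place of $F$ in $E$ really exists, which follows since $E/F$ is a degree-two extension and by Chebotarev infinitely many primes of $F$ are inert (or one may use any ramified prime). A minor bookkeeping point is the interpretation of ``set of places of size at most $1$'': with the argument above the only possibly-non-quasisplit \emph{finite} place is $v_0$, and when $s=0$ there is none, so in all cases at most one place beyond the prescribed archimedean behavior is needed. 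I do not expect a genuine obstacle here; the content is entirely in correctly quoting the classification and the local invariant formulas, and the proof is a two-line parity argument once those are in hand.
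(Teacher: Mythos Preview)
Your proposal is correct and follows exactly the approach the paper intends: the lemma is stated as an immediate consequence of the preceding classification discussion (the paper gives no separate proof), and your parity argument with the local invariants from \cite[\S 0.3.3]{KMSW} is precisely the content of that discussion. The only cosmetic point is that ``a set of places of size at most~$1$'' in the lemma refers to finite places (as confirmed by the lemma's use in Section~\ref{section on level structures}), so your hedging about how to count archimedean places is unnecessary.
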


\begin{remark} \label{remark about pure inner forms}
	The authors of~\cite{KMSW} work with a refinement of the notion of inner form. Recall that isomorphism classes of inner forms of~$G$ are in bijection with~$H^1(\Gamma_F, G^{\text{ad}})$. In addition to this, they introduce the notion of pure inner form, a triple consisting of~$G$, the map~$\xi$, and a cocycle~$z \in Z^1(\Gamma_F, G)$ compatible with the inner twist~$\xi$. The map sending a pure inner form to~$z$ induces a bijection between isomorphism classes of pure inner forms and~$H^1(\Gamma_F, G)$. Inner forms of $U(N)$ which can be realized as pure inner forms are those coming from a Hermitian space, i.e. precisely the groups we work with. We will point out dependency on~$z$ whenever it appears: in the normalization of transfer factors, and the pairings associated to local Arthur packets. Due to our rather rudimentary use of the stable trace formula, the choice of pure inner form does not affect our results.
\end{remark}

\subsubsection{$L$-groups} \label{section on L-groups of unitary groups} Throughout, we will work with the Weil group version of the $L$-group, primarily because it is well-suited to our description of local parameters. In terms of the actual definition of the $L$-group, this choice is purely cosmetic as the Galois actions involved will always factor through a quotient of order at most 2.

For $G/F$ with $F$ either local or global, fix a root datum.  The $L$-group of $G$ is a semidirect product \[ ^LG =  \hat{G} \rtimes W_F. \] Here the group~$\hat{G}$ is the complex dual group of~$G$, and the action of~$W_F$ on~$\hat{G}$ is induced by the Galois action on the root datum of~$G$. As a consequence, if~$G$ is split then~$^LG = \hat{G} \times W_F$, and in particular,~$^LGL_N(F) = GL_N(\BC) \times W_F.$ If~$G'/F$ is an inner form of $G$ then by definition $G'(\bar{F}) \simeq G(\bar{F})$ and the corresponding Galois actions differ by an inner automorphism. These induce isomorphisms of root data and Galois actions, and ${^L}G \simeq {^L}G'$.

For $F$ global, we will sometimes abuse notation and denote by~$^LG_v$ the $L$-group of the base change of~$G$ to a completion $F_v$. In this situation, the embedding~$W_{F_v} \to W_F$ induces a map~$^LG_v \to {^L}G$ which restricts to the identity on $\hat{G}$. 

The $L$-group of~$U(N)$ (and of any inner form) is defined as \[ ^LU(N) = GL_N(\BC) \rtimes W_F \] where $W_F$ acts through the order two quotient~$\Gamma_{E/F}$. The non-trivial element~$\sigma \in \Gamma_{E/F}$ acts by~$ \sigma(g) = \Phi_N^{-1}g^{-t}\Phi_N$ of~$GL_N$, where~$\Phi_N$ is as in~\eqref{Matrix Phi_N}.

\subsubsection{Morphisms of $L$-groups} \label{section on morphisms of L-groups}

A morphism of~$L$-groups, or $L$-morphism, is a continuous morphism \[ \eta: {^L}H \to {^L}G \] which commutes with the projections onto~$W_F$. In practice, all morphisms of $L$-groups considered here will be admissible, i.e. induced by an algebraic map $\hat{H} \to \hat{G}$ and such that the image of elements of $W_F$ are semisimple. 

Denote the Weil restriction~$\Res^E_FGL_N$ by~$G(N)$.  In particular, $G(N)(F) = GL_N(E)$ and $\GN(E) \simeq  GL_N(E) \times GL_N(E)$. As such, the connected component $\widehat{G(N)}$ of~$^L \GN$ is the product of two copies of $GL_N(\BC)$, and $W_F$ acts through~$\Gamma_{E/F}$ via the automorphism that interchanges the two factors. Many objects associated to a unitary group~$U(N)$ depend on a choice of embedding of~$L$-groups from~${^L}U(N)$ to~${^L}\GN$.  

To define the~$L$-embedding~$\emb: {^L}U(N) \to {^L}\GN$, recall the characters~$\chi_\kappa$ from~\ref{section on notation}. If $F$ is global, we will use these characters, and if~$F = F_v$ is local, we will momentarily also denote by~$\chi_{\kappa}$ the restriction of~$\chi_\kappa$ to~$E_v^\times$. Let $I_N$ be the identity matrix. For each~$\kappa \in \{\pm1\}$ we define~$\eta_{\kappa}$ as \begin{align} \begin{split} \label{the choice of bas change morphism}
	\emb(g \rtimes 1) &= (g, {^t}g^{-1}) \rtimes 1, \quad g \in \hat{G} \\
	\emb(I_N \rtimes \sigma) &= (\chi_\kappa(\sigma)I_N, \chi_{\kappa}^{-1}(\sigma)I_N) \rtimes \sigma, \quad \sigma \in W_E \\ 
	\emb(I_N \rtimes w_c) &= (\kappa \Phi_N, \Phi_N^{-1}) \rtimes w_c.
	\end{split}
\end{align} 
We consider a second class of $L$-embeddings $\embend: {^L}(U(N_1) \times... \times U(N_r)) \to {^L}U(N)$,  for ~$\sum N_i = N$ into $^{L}U(N)$. Put $\kappa_i = (-1)^{N-N_i}$ for each $i$, and let $\underline{\kappa} = (\kappa_1,...,\kappa_r)$. Given $\underline{\chi}$ with signature $\underline{\kappa}$, and for a choice of $w_c$ as above, the embedding $\embend$ is defined as 
\begin{align}\begin{split}
\label{embedding of endoscopic groups}
	\embend(g_1,...,g_r \rtimes 1) &= \text{diag}(g_1,...,g_r)\rtimes 1, \quad g_i \in GL_{N_i}(\BC)\\
	\embend(I_{N_1},...,I_{N_r} \rtimes \sigma) &= \text{diag}(\chi_{\kappa_1}(\sigma)I_{N_1},...,\chi_{\kappa_r}(\sigma)I_{N_r}) \rtimes \sigma, \quad \sigma \in W_E \\ 
	\embend(I_{N_1},...,I_{N_r} \rtimes w_c) &= \text{diag}(\kappa_1\Phi_{N_1},...,\kappa_r\Phi_{N_r}) \cdot \Phi_N^{-1} \rtimes w_c.
	\end{split} 
\end{align}

Note that the composite embedding $\emb \circ \embend$ gives an embedding \begin{equation}\label{eqn embedding into GLN} \bcend: 
{^L}(U(N_1) \times ... \times U(N_r)) \to {^L}\GN
\end{equation} with signature $\kappa \cdot \underline{\kappa} = (\kappa \kappa_1, .... ,\kappa\kappa_r)$. 

\begin{remark}
	The need to consider several embeddings depending on $\underline{\kappa}$ stems from the possibility that parameters for the pair $(U(N), \eta_{+})$ may factor through different embeddings of the products of groups $U(N_i)$ associated to different signs. 
\end{remark}

\subsection{Parameters}
We introduce the discrete automorphic spectrum of  a unitary group $G$, and the local and global parameters which will classify the (constituents of) automorphic representations, following \cite{A}, \cite{KMSW}, and \cite{Mok}. 
\subsubsection{Automorphic Representations} \label{subsection on automorphic representations}
Let $G/F$ be a reductive group. Fix a closed subgroup $\fX \subset Z_G(\BAF)$ and a maximal compact subgroup of $K$ of~$G(\BAF)$, which in turn determines maximal compact subgroups $K_v$ of~$G_v = G(F_v)$ for any $v$. We consider the right-regular representation of~$G(\BAF)$ on\[\Aut(G(F) \dom G(\BAF), \omega), \] the discrete part of the space of square-integrable functions which transform by $\omega$ under the action of $\fX$. We will omit the $\omega$ when we allow for any central character. In our initial cases of interest, $G/F$ will be an anisotropic inner form of~$U_{E/F}(N)$, the group~$\fX$ will be the full center, the central character~$\omega$ will be trivial, and the entire automorphic spectrum will be discrete. However for induction purposes we will consider arbitrary central character data~$(\fX, \omega)$ and allow for~$L^2(G(F)\dom G(\BAF), \omega)$ to have a continuous part.  The discrete spectrum decomposes as \[ \Aut(G(F) \dom G(\BAF)) = \bigoplus m(\pi) \pi \] where $m(\pi)$ denotes the multiplicity of $\pi$, and the irreducible constituents are automorphic representations. Each $\pi$ is a restricted tensor product~$\pi = \ten_v'\pi_v$ with each~$\pi_v$ an irreducible
admissible unitary representation
of~$G_v$. All but finitely many $\pi_v$ are spherical with respect to $K_v$.  The representation $\pi_v$ is said to be tempered if its $K_v$-finite matrix coefficients belong to~$L^{2+\epsilon}(G_v)$ for all $\epsilon>0$. 

After fixing a maximal compact subgroup $K_\infty$ of $G_\infty$, we replace $\pi_\infty$ by the dense subspace of $K_\infty$-finite smooth vectors, which we view as an admissible $(\fg_\infty,K_\infty)$-module. This is no loss of information since unitary admissible representations are determined by their underlying $(\fg_\infty,K_\infty)$-modules \cite[9.2]{Kn01}.

\subsubsection{Local Langlands Parameters} \label{subsection on local Langlands}
 Let $F$ be a local field with Weil group~$W_F$. The Langlands group~$L_F$ of~$F$ is defined as  \[L_F := \begin{cases}
W_F & F \text{ is archimedean} \\ 
W_F \times SU(2,\BR)& F \text{ is non-archimedean.}
\end{cases}\] 

A local Langlands parameter  for the reductive group $G/F$ is a continuous  homomorphism $ \phi: L_F \to {^LG}$ satisfying certain conditions (see \cite{Bo79} for a discussion):

\begin{itemize}
	\item[(i)] The map~$\phi$ commutes with the  projections~$L_F \to W_F$ and~${^L}G \to W_F$. 
	\item[(ii)] In the non-archimedean case, the restriction $\phi\mid_{SU(2,\BC)}$ is algebraic. 
	\item[(iii)] The image of~$W_F$ under~$\phi$ consists of semisimple elements of~${^L}G$. 
	\item[(iv)] If the image of $\phi$ in $\hat{G}$ factors through a parabolic subgroup of $\hat{G}$, then this parabolic subgroup must be the dual $\hat{P}$ of a parabolic subgroup $P$ of $G$. 
\end{itemize}

Continuous maps that satisfy condition (i) are known as~$L$-homomorphisms. If they additionally satisfy (ii)-(iii) they are called \emph{admissible.} If they satisfy (iv), they are called \emph{relevant}, or \emph{G-relevant}. Finally, we say that~$\phi$ is \emph{bounded} if~$W_F$ has bounded image in~$\hat{G}$. We will denote the collection of $\hat{G}$-conjugacy classes of Langlands parameters for~$G$ by~$\Phi(G)$.

\subsubsection{Local Arthur Parameters}\label{How local parameters of unitary groups are the image of base change. } In order to describe the non-tempered spectrum of~$G$, we consider enhancements of Langlands parameters known as Arthur parameters. These are admissible~$L$-homomorphisms \[ \psi: L_F  \times SL_2(\BC) \to {^LG}\] such that~$\psi \mid_{L_F}$ is bounded. We denote the set of~$\hat{G}$-conjugacy classes of Arthur parameters by~$\Psi(G)$. We refer to the~$SL_2(\BC)$ factor in the above product as the ``Arthur~$SL_2$", and say that~$\psi$ is bounded if it restricts trivially to the Arthur~$SL_2$.  

Each Arthur parameter~$\psi$ determines a Langlands parameter~$\phi_{\psi}$ as follows. Recall (eg. \cite{Ta79}) that the Weil group~$W_F$ is naturally equipped with a norm homomorphism~$\mid \cdot \mid$ to~$\BC^\times$. Then $\phi_{\psi}$ is defined as the composition\[ \phi_{\psi} : W_F \to {^L}G, \quad \phi_{\psi}(\sigma) = \psi \left( \sigma, \tbt{|\sigma|^{1/2}}{0}{0}{|\sigma|^{-1/2}}\right).  \]

We now give a more detailed description of local Arthur parameters in the case where $G = U(N)$, following Section 2.2 of Mok \cite{Mok}.  Specifically, we use the map $\eta_{\kappa}$ introduced in \ref{section on morphisms of L-groups} to realize $\Psi(U(N))$ as a set of $N$-dimensional representations satisdying an appropriate self-duality condition.

We first describe a natural bijection between $\Psi(G(N))$, and $\Psi(GL_N(E))$. To produce an element of~$\Psi(\GN)$, one starts with $\psi \in \Psi(GL_N(E))$, i.e. an admissible~$N$-dimensional representation of~$L_E \times SL_2(\BC)$,  and promotes it to a $L$-morphism~$\psi':L_F \times SL_2(\BC) \to {^L}\GN$ by choosing $w_c \in W_F \setminus W_E$ and defining \begin{align*} \psi'(\sigma,g) &= (\psi(\sigma,g), \psi^c(\sigma ,g)) \rtimes \sigma, \quad (\sigma, g) \in L_E \times SL_2(\BC) \\ 
	\psi'(w_c) &= (\psi(w_c^2), I_N) \rtimes w_c,
 \end{align*} where $\psi^c(\sigma, g) = \psi(w_c^{-1}\sigma w_c,g)$. The resulting bijection $\Psi(G(N)) \simeq \Psi(GL_N(E))$ is independent of the choice of $w_c$.  Moreover, if~$\psi^c \simeq \psi^\vee$ where~$\psi^\vee$ is the contragredient of~$\psi$, then~$\psi$ is called \emph{conjugate self-dual}. More precisely, the parameter $\psi$ is conjugate self-dual of parity~$\pm 1$, depending on the parity of the resulting bilinear form.

The map~$\emb$ introduced in \eqref{the choice of bas change morphism} then induces a mapping \begin{align}\begin{split}
\label{map giving unitary parameters}
{\emb}_*: \Psi(U(N)) &\to \Psi(\GN) \simeq \Psi(GL_N(E))
\end{split} 
\end{align} which is shown by Mok, following work of Gan-Gross-Prasad~\cite{GGP12}, to be an injection whose image consists precisely of the subset of $\Psi(GL_N(E))$ of conjugate self-dual representations of parity~$(-1)^{N+1}\kappa$. 

\subsubsection{Global Arthur Parameters} \label{SectionOnGlobalArthurParameter} In lieu of global parameters, Arthur~\cite[\S 1.4]{A} introduces formal objects realized by combining cuspidal automorphic representations of~$GL_{N}$ and representations of the Arthur~$SL_2$. Echoing the local discussion, global Arthur parameters are first defined in terms of~$\GN$, and Arthur parameters for~$U(N)$ are the ones factoring through a fixed embedding of~$L$-groups.  

A global Arthur parameter for~$GL_N$ is an unordered sum \[ \psi^N = \boxplus_i \psi^{N_i}_i, \quad \psi^{N_i}_i = \mu_i \boxtimes \nu(m_i). \] Here $\mu_i$ is a cuspidal automorphic representation of $GL_{n_i}(\BA_E)$ and $\nu(m_i)$ is the irreducible $m_i$-dimensional representation of~$SL_2(\BC)$, with~$m_in_i = N_i$ and~$\sum_i N_i  = N$. Departing from our references, we immediately restrict our attention to the set of Arthur parameters such that the~$\psi_i^{N_i}$ are pairwise distinct: we denote this set $\Psi(N)$ instead of $\Psi_{\text{ell}}(N)$. 
The collection $\Psi(N)$ contains a distinguished subset $\Psi_{\text{sim}}(N)$ of simple parameters with a unique summand~$\psi^{N}$. Following the theorem of M\oe glin--Waldspurger \cite{MW89}, this subset $\Psi_{\text{sim}}(N)$ parameterizes the discrete spectrum of $GL_N$.

We now give the construction of global Arthur parameters for a quasi-split unitary group~$G = U(N)$, following Section 1.3.4 of~\cite{KMSW}.  We start by restricting our attention to the set~$\tilde{\Psi}(N) \subset \Psi(N)$ of parameters for which each of the~$\mu_i$ is \emph{conjugate self-dual}, i.e.  satisfies~$\mu_i = \bar{\mu_i}^\vee$ where~$\bar{\mu} = \mu \circ \sigma$ and $\sigma \in \GalEF$ is nontrivial.

To record the parameter in relation to the embedding $\emb$, we introduce the group $\mL_\psi$. If $\psi^N$ decomposes as a sum of $\mu_i \boxtimes \nu(m_i)$, we associate to each index a pair $(U_{E/F}(n_i), {\emb}_i)$  as in \ref{section on morphisms of L-groups}. Here the choice of sign~$\kappa_i$ is determined by~$\mu_i$. Then $\mL_\psi$ is the fiber product $\mL_\psi =  \prod_i  ({^L}U_{E/F}(n_i) \to W_{F})$. There is a natural map~$\tilde{\psi}^N: \mL_\psi \times SL_2(\BC) \to {^L}\GN$ given by the direct sum \[ \tilde{\psi}^N = \oplus ({\emb}_i \ten \nu(m_i)). \]

A global Arthur parameter for~$(U_{E/F}(N), \emb)$ is then defined as a pair~$\psi = (\psi^N, \tilde{\psi})$ where~$\psi^N \in \tilde{\Psi}(N)$, and \[ \tilde{\psi}: \mL_\psi \times SL_2(\BC) \to {^L}U_{E/F}(N) \] is an~$L$-homomorphism such that~$\emb \circ \tilde{\psi} = \tilde{\psi}^N$. It is useful to remember that~$\psi^N$ encodes the arithmetic information of the automorphic representations of~$GL_{n_i}$, and that~$\tilde{\psi}$ is an actual homomorphism. As such, we can (and will) discuss the centralizer of the image of~$\tilde{\psi}$. Two Arthur parameters are equivalent if the $\tilde{\psi}$ are~$\hat{U}(N)$-conjugate, and we denote the set of equivalence classes of $\psi$ as above by~$\Psi(U(N), \emb)$. Note that we have again broken off from our references in the choice of notation: our set~$\Psi(U(N), \emb)$ is the one that the authors of~\cite{KMSW} denote $\Psi_2(U_{E/F}(N), \emb)$. Finally, note that the map $\eta_{\kappa,*}$ sending~$\psi$ to  $\psi^N$ is an injection: this allows us to view~$\Psi(U(N), \emb)$ as a subset of~$\Psi(N)$. If $(G, \embend)$ is a product as in~\eqref{eqn embedding into GLN}, we can similarly define~$\Psi(G, \embend)$. Via the block-diagonal embedding~$\prod_i GL_{N_i} \hookrightarrow GL_N$, we can identify $\Psi(G, \embend) \simeq \prod_i \Psi(U(N_i), \eta_{\kappa_i})$. 

\begin{remark}\label{our restriction on the parameter}
	We have made two constraints on the set of parameters under consideration here which bear highlighting. We require: \begin{itemize}
		\item[(i)] that the irreducible summands $\psi_i$ be pairwise distinct. In Mok's description of the parameters in \cite[\S 2.4]{Mok} this amounts to requiring that all the $l_i = 1$. 
		\item[(ii)] that each irreducible summand be conjugate self-dual. This is stricter than requiring~$\psi$ to be conjugate self-dual since we could have had~$\mu_i^\vee \simeq \mu_j$. 
	\end{itemize}
	Parameters satisfying these conditions are called \emph{elliptic}. These restrictions will give us control on the group $\mS_{\psi}$ to be introduced below, whose characters determine which products of local representations occur in the discrete spectrum. It is also the case that only the parameters in the set which we denote by~$\Psi(U(N), \emb)$ correspond to packets whose members actually appear in the decomposition of~$\Aut$, although this fact is far from obvious and is one of the main theorems in \cite{Mok} and \cite{KMSW}. Following this result, global elliptic parameters are also called \emph{square-integrable.} 
\end{remark}
\subsubsection{Localization}
We now describe how a global Arthur parameter~$\psi \in \Psi(U(N), \emb)$ gives rise to local Arthur parameters~$\psi_v$  at each place~$v$. Each cuspidal representation~$\mu$ of~$GL_N$ factors as a restricted tensor product~$\mu = \ten' \mu_v$ over places~$v$ of~$F$. The~$\mu_v$ are admissible representations of~$GL_N(F_v)$. The local Langlands correspondence  for~$GL_N$ ~\cite{HT01,He00,Sc13} associates to $\mu_v$ a parameter $\phi_{\mu_v} \in \Phi(GL_N)$. Following \cite{A}, we define the localization of~$\psi$ at~$v$ as the direct sum \[\psi_v = \oplus_i \psi_{v,i} , \quad \psi_{v,i} = \phi_{\mu_{v,i}} \ten \nu(m_i).\] 
These localizations a priori only belong to $\Psi(\GN)$. The fact that they are in the image of the map \eqref{map giving unitary parameters} is one of the central theorems of \cite{Mok}. 

\subsubsection{Parameters of Inner Forms}
Let $(G, \xi)$ be an inner form of $G^* = U(N)$. A local Arthur parameter for~$G$ is simply a~$G$-relevant parameter for $U(N)$, see Section~\ref{subsection on local Langlands}. Globally, a parameter $\psi \in \Psi(G^*, \emb)$ is $G$-relevant if it is so everywhere locally \cite[\S 1.3.7]{KMSW}. We denote by $\PsiG$ the collection of $G$-relevant parameters in $\Psi(G^*, \emb)$. In summary, we have the following chain of inclusions: \[ \PsiG \subset \Psiqs \subset \tilde{\Psi}(N) \subset \Psi(N),\] where the parameters in $\tilde{\Psi}(N)$ are conjugate self-dual, those in $\Psiqs$ factor through the embedding $\emb$, and those in $\PsiG$ are additionally $G$-relevant.

 \subsubsection{Parameters and Conjugacy Classes} \label{parameters determine conjugacy classes}We attach families of conjugacy classes to objects introduced above, following \cite[\S 1.3]{A}. For~$F$ global,~$G$ reductive, and any finite set~$S$ of places of~$F$ containing the archimedean ones, let~$\mC^S(G)$ denote the set of collections~$c = \{c_v\}_{v \notin S}$, where each~$c_v$ is a semisimple conjugacy class in~$\hat{G}$. For two sets~$S$ and~$S'$, let~$c \sim c'$ if~$c_v = c'_v$ for almost all~$v$. Denote the set of such equivalence classes by~$\mC(G)$. As we did for parameters, let~$\mC(N):=\mC(GL_N)$. We associate elements of~$\mC(G)$ to automorphic representations~$\pi$ of~$G$. Factoring~$\pi = \ten'_v \pi_v$, let~$c(\pi) = \{c(\pi_v)\} \in \mC(G)$ be the Satake parameters of all the unramified~$\pi_v$. Note also that an $L$-embedding $\eta: {^L}G \to {^L}G(N)$ such as those introduced in~\ref{section on morphisms of L-groups} gives rise to a map $ \eta_*: \mC(G) \to \mC(N)$. 

When~$G = GL_N$ one can associate an element of~$\mC(N)$ to each~$\psi \in \Psi(N)$. Starting with simple parameters~$\psi \in \Psi_{\text{sim}}(N)$, use the recipe for the representation~$\pi_{\psi}$ prescribed by Moeglin-Waldspurger's theorem~\cite{MW89} and let~$c(\psi) := c(\pi_{\psi})$. If~$\psi$ is not simple, apply the process to its simple constituents and associate to~$\psi$ the conjugacy class coming from the diagonally embedded product of the~$GL_{N_i}$ inside of~$GL_N$. This produces a mapping \[\Psi(N) \to \mC(N), \quad \psi \mapsto c(\psi)\] which is injective, following  Jacquet-Shalika \cite{JS81}. Denote its image by $\mC_{aut}(N)$. 

\subsubsection{Stabilizers and Quotients}\label{section where we compute the stabilizers}
For $\psi$ either local or global, we have \[  S_\psi := \mathrm {Cent} (\mathrm{Im}(\psi), \hat G), \quad 
\bar{S}_\psi := S_\psi /Z(\hat G)^{W_F},  \quad 
\mS_\psi:= \pi_0(\bar{S}_\psi).
\]  As mentioned previously, when~$\psi$ is global then~$\text{Im}(\psi)$ really means~$\text{Im} (\tilde{\psi})$.  Localization of parameters~$\psi \mapsto \psi_v$ induces a mapping~$\mS_{\psi} \to \mS_{\psi_v}$.  When~$G$ is unitary, the groups~$\mS_{\psi}$ can be readily computed, as the four authors do in \cite[p.63]{KMSW}. In particular, for $F$ global and~$\psi \in \Psiqs$ decomposing as~$\psi = \boxplus_{i=1}^r \psi_i$, we have \begin{equation} \label{size of S_psi} \mS_{\psi} = (\BZ/2\BZ)^{r-1}.  \end{equation}
The reader who looks at the computations in~\cite{KMSW} will notice that this is the point where we use the assumptions from Remark~\ref{our restriction on the parameter}. Finally, we introduce the element  
\begin{equation} \label{spsi} s_\psi := \psi\left( 1, \tbt{-1}{0}{0}{-1} \right) \in S_\psi. \end{equation}  We will sometimes conflate $s_\psi$ and its image in the quotient $\mS_{\psi}$. 
\begin{remark}\label{we will not use S-becare}
	The authors of  \cite{KMSW} work with the centralizer quotient~$S^\natural_\psi$, which agrees with~$\mS_{\psi}$ for~$G$ local and unitary. If the local group~$G_v$ is isomorphic to~$GL_{N,v}$ (the only possibility for us at split places, since our unitary groups arise from Hermitian forms) then~$S^\natural_\psi \simeq \BC^\times$. However, if~$G_v = GL_{N,v}$, then only the trivial character of~$S^\natural_\psi$ arises in the character identities, as will be discussed in Section~\ref{Section on Local packets for GLN}. Thus there is no loss in working instead with the group~$\mS_{\psi} = \{1\}$. In the global situation, the characters of~$S^\natural_\psi$ that arise all factor through~$\mS_{\psi}$~\cite[p. 89]{KMSW}. Note that we follow Arthur's convention and use the notation~$\mS_{\psi}$ instead of~$\bar{\mS}_\psi$ as in  \cite{KMSW}. 
\end{remark}

\subsubsection{Epsilon Factors} \label{sectionon epsilon factors} The last invariant attached to a global parameter~$\psi$ is the character~$\epsilon_\psi$ of~$\mS_{\psi}$, defined by Arthur in~\cite[\S 1.5]{A}.
The definition involves the symplectic root number~$\epsilon(1/2, \mu_\alpha)$ of an automorphic $L$-function~$L(s,\mu_\alpha)$ for a product of general linear groups, obtained by composing $\psi$ with the adjoint representation.  As such~$\epsilon_{\psi}$ encodes arithmetic data in the decomposition of~$\Aut(G(F) \dom G(\BAF))$. Note that~$\epsilon_\psi$ only depends on~$\psi$ and in particular is independent of the inner form of~$G^*$ under consideration, as discussed in~\cite[p.89]{KMSW}.
\subsection{Endoscopic Data} \label{Section on Endoscopic data.}
An endoscopic datum for~$G/F$ is a triple~$(\xi, H, s)$ where

\begin{itemize}
	\item[-] $s$ is a semisimple element of $\hat{G}$,
	\item[-]$H/F$ is a connected, quasisplit group,
	\item[-]$\xi : {^L}H \to {^L}G$ is an $L$-embedding. 
\end{itemize}  

The triple must satisfy certain conditions, see \cite[\S 1.1.1]{KMSW}, including that~$\xi(\hat{H})$ is the connected component of the centralizer of~$s$ in~$\hat{G}$. We will work only with \emph{elliptic} endoscopic data, characterized by the requirement that $\xi(Z(\hat{H})^{W_F})^0 \subset (Z(\hat{G}))^{W_F}$. As such, we denote the set of conjugacy classes of elliptic endoscopic data for $G$ by $\mE(G)$, dropping the ``ell" subscript appearing in our references. An endoscopic datum of $G$ for which ${^L}H \not\simeq  {^L}G$ will be called \emph{proper}.  We will frequently abuse notation and refer to $H$ as a stand-in for the full datum, and denote the other two elements of the triple by $\xi_H$ and $s_H$. Lastly, we will also use the formalism of endoscopic data for our unitary groups and denote by~$\tilde{\mE}(N)$ the set of pairs consisting of a product~$G$ of quasisplit unitary groups together with the~$L$-embedding~$ \xi = \eta_{ \kappa, \underline{\kappa}}$ from Section~\ref{section on morphisms of L-groups}, and by~$\tilde{\mE}_{\text{sim}}(N)$ the subset of~$\tilde{\mE}(N)$ for which~$G = U(N)$. 

For any inner form~$\U$ of~$U_{E/F}(N)$,  the set~$\mE(G)$ consists of pairs\[(H, \xi) = (U(N_1) \times U(N_2), \embend), \quad N_1,N_2 \geq 0, \; N_1+N_2 = N,\] where $ \embend$ was defined in \eqref{embedding of endoscopic groups}. The signature $\underline{\kappa} = ((-1)^{N-N_1}, (-1)^{N-N_2})$ depends on the respective ranks of the groups. The equivalence class of endoscopic data is then uniquely determined by $N_1$,  see \cite[\S 2.4]{Mok}. 

\subsubsection{Endoscopic Data and Parameters} \label{section on endoscopic data and parameters} 
Let $F$ be global and $G/F$be unitary and $\psi = (\psi^N, \tilde{\psi}) \in \Psiqs$ be an Arthur parameter. Let~$(H,\xi_H,s_H) \in \mE(G)$, and let~$\psi^H = (\psi^{N,H}, \tilde{\psi}^H) \in \Psi(H,\eta_{\kappa}\circ \xi_H)$ be an Arthur parameter for $H$ satisfying~$\psi^N = \psi^{N,H}$ and $\tilde{\psi} = \xi_H \circ \tilde{\psi}^H$. In this situation, we will abuse notation and write that $\psi = \xi_H\circ \psi^H$.  Since $s_H$ commutes with~$H$, it also commutes with the image of~$\tilde \psi$.   We thus get a mapping
\begin{equation}  \label{map sending H-parameter to G-parameter}  (H, \psi^H) \mapsto (\xi_H\circ\psi^H, s_H) \end{equation} from the set of pairs~$(H, \psi^H)$ onto the set of pairs consisting of a parameter~$\psi$ for~$G$ and an element $s$ of the centralizer~$S_\psi$. The importance of the quotient~$\mS_{\psi}$ comes from the fact that for each~$\psi$, the map~\eqref{map sending H-parameter to G-parameter} descends to a bijection between~$\mS_{\psi}$ and the set of endoscopic data such that~$\psi$ factors through~$\xi_H$. We state this result below, under simplifying assumptions:~$G$ global unitary and~$\psi$ square-integrable.  

\begin{lemma} \label{lemma on endoscopic bijection} Let~$F$ be global and~$G^* = U_{E/F}(N)$. Let~$\psi=(\tilde \psi,\psi^N) \in \Psiqs$. The map~\eqref{map sending H-parameter to G-parameter} induces a bijection \[(H, \psi^H) \leftrightarrow (\psi, s)\] where the left-hand side runs over pairs where~$H$ stands in for an endoscopic datum~$(H, \xi,s)$ and $\psi^H = (\tilde \psi^H, \psi^{N,H}) \in \Psi(H,\eta_{\kappa}\circ \xi_H)$ with~$\psi^N = \psi^{N,H}$ and~$\tilde{\psi} = \xi \circ \tilde\psi^H$, and the right-hand side runs over elements of~$\mS_{\psi}$. 
\end{lemma}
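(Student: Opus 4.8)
The plan is to make both sides of the asserted bijection completely explicit and then observe that \eqref{map sending H-parameter to G-parameter} becomes a purely combinatorial correspondence. Write $\psi^N = \boxplus_{i=1}^r \mu_i\boxtimes\nu(m_i)$, put $\psi_i := \mu_i\boxtimes\nu(m_i)$ and $d_i := n_i m_i$, so $\sum_i d_i = N$. The first step is to invoke the computation recalled in \S\ref{section where we compute the stabilizers}: because the $\psi_i$ are pairwise distinct and each is conjugate self-dual (the two hypotheses of Remark~\ref{our restriction on the parameter}), Schur's lemma forces every element of $S_\psi \subset \hat{G} = GL_N(\BC)$ to be scalar $\lambda_i$ on each summand space $V_i$, and commuting with the value of $\psi$ on $W_F \setminus W_E$ forces $\lambda_i^2 = 1$; thus $S_\psi = \prod_{i=1}^r\{\pm 1\}$, realized as the diagonal matrices $\mathrm{diag}(\epsilon_1 I_{d_1},\dots,\epsilon_r I_{d_r})$ with $\epsilon_i \in \{\pm 1\}$. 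Here $Z(\hat{G})^{W_F} = \{\pm I_N\}$ is the diagonal copy of $\{\pm 1\}$, and since $S_\psi$ is already finite, $\mS_\psi = S_\psi/Z(\hat{G})^{W_F} = (\BZ/2\BZ)^{r-1}$, as in \eqref{size of S_psi}. The reformulation I would record is that an element $s \in \mS_\psi$ is the same datum as an unordered partition $\{I_1, I_2\}$ of $\{1,\dots,r\}$ into two (possibly empty) parts: take $\epsilon_i = +1$ on $I_1$ and $\epsilon_i = -1$ on $I_2$; the ambiguity in which part is $I_1$ is exactly the central sign.

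For surjectivity I would attach to a partition $\{I_1, I_2\}$ the endoscopic datum $H = U(N_1)\times U(N_2)$ with $N_j := \sum_{i\in I_j} d_i$ and embedding $\xi_H = \eta_{\underline\kappa}$, $\underline\kappa = ((-1)^{N-N_1},(-1)^{N-N_2})$, as in \S\ref{Section on Endoscopic data.}, together with the parameter $\psi^H = (\psi^{H_1},\psi^{H_2})$ where $\psi^{H_j} := \boxplus_{i\in I_j}\psi_i$. The one substantive point is to check that $\psi^{H_j}$ really defines a parameter for $U(N_j)$ compatible with $\eta_\kappa \circ \xi_H$, which is a parity computation: each $\psi_i$ is conjugate self-dual of parity $(-1)^{N-1}\kappa$ since $\psi \in \Psi(U(N),\eta_\kappa)$, whereas a summand of a parameter for $U(N_j)$ relative to the embedding of signature $\kappa\kappa_j$ must have parity $(-1)^{N_j-1}\kappa\kappa_j$; these agree because $(-1)^{N_j-1}\kappa\kappa_j = (-1)^{N_j-1}(-1)^{N-N_j}\kappa = (-1)^{N-1}\kappa$, using the sign conventions of \S\ref{section on morphisms of L-groups}. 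Since $\xi_H = \eta_{\underline\kappa}$ is block-diagonal, composing reassembles $\psi^N$, so $\psi = \xi_H\circ\psi^H$; and $s_H = \mathrm{diag}(I_{N_1},-I_{N_2})$ is, in the coordinates above, $\mathrm{diag}(\epsilon_i I_{d_i})$ with $\epsilon_i = +1$ on $I_1$ and $-1$ on $I_2$, so its class in $\mS_\psi$ is the given $s$. This re-derives the surjectivity of \eqref{map sending H-parameter to G-parameter} onto the pairs $(\psi, s)$, hence surjectivity of the induced map to $\mS_\psi$.

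For injectivity I would use that $Z(\hat{G})^{W_F} = \{\pm I_N\}$ is central in $\hat{G}$: if $(H,\psi^H)$ and $(H',\psi'^{H'})$ map into the same class $s \in \mS_\psi$, then $s_H$ and $s_{H'}$ differ by an element of $Z(\hat{G})^{W_F}$ and so have the same centralizer, whence $\xi_H(\hat{H}) = \mathrm{Cent}(s_H,\hat{G})^\circ = \mathrm{Cent}(s_{H'},\hat{G})^\circ = \xi_{H'}(\hat{H'})$; an elliptic endoscopic datum for $U(N)$ being determined by this subgroup $GL_{N_1}\times GL_{N_2}$ — equivalently by the pair $\{N_1,N_2\}$ together with the canonical embedding $\eta_{\underline\kappa}$ — this forces $H \cong H'$. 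Finally $\psi^H = \psi'^{H'}$, since $\xi_H$ is injective and both compose with $\xi_H$ to $\psi$ (equivalently, the decomposition $\psi^N = \psi^{H_1}\boxplus\psi^{H_2}$ into the two sub-sums is unique because the simple summands $\psi_i$ are distinct, by Jacquet--Shalika \cite{JS81}). The only genuine work is the structural description of $S_\psi$ in the first step and the parity/normalization bookkeeping in the second; everything after that is formal, given centrality of $Z(\hat{G})^{W_F}$ and injectivity of $\xi_H$. In fact the lemma is essentially a repackaging of the parameter-and-endoscopy computations of \cite{Mok} and \cite{KMSW}, which I would cite rather than reprove in detail.
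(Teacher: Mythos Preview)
Your argument is correct. The paper's own proof, by contrast, consists of a single citation: it points to \cite[\S 1.4]{KMSW}, notes that the statement is a reformulation of Lemma~1.4.3 therein, and records that the square-integrability hypothesis on~$\psi$ forces $S_\psi$ (hence $\bar S_\psi$) to be finite, so that $\bar S_\psi = \mS_\psi$ and the general bijection of \cite{KMSW} between pairs $(H,\psi^H)$ and $\bar S_\psi$ can be read off directly as a bijection with~$\mS_\psi$.

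Your route is genuinely different in character: rather than invoking the abstract correspondence of \cite{KMSW}, you unwind both sides explicitly in the elliptic case, identifying $\mS_\psi$ with unordered two-block partitions of the index set and matching these by hand with the explicit endoscopic data $U(N_1)\times U(N_2)$ together with the sub-sums of~$\psi^N$. The parity check you carry out is exactly the verification that the signature conventions of \S\ref{section on morphisms of L-groups} are internally consistent, and your injectivity argument via centrality of $Z(\hat G)^{W_F}$ is the concrete avatar of what \cite{KMSW} proves more generally. What you gain is a self-contained, transparent description of the bijection that will be reused later in the paper (e.g.\ in identifying the dominant group $H_\psi$); what the paper's citation buys is brevity and the reassurance that the statement holds in the generality of \cite{KMSW}'s framework without re-deriving the endoscopy combinatorics. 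Your closing remark that one would ultimately cite \cite{Mok} and \cite{KMSW} is exactly what the paper does.
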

\begin{proof}
	The proof occupies Section 1.4 of \cite{KMSW}, and the above statement is a reformulation of Lemma 1.4.3 therein. The square-integrability assumption on~$\psi$ implies that $S_\psi$ and a fortiori $\bar{S}_\psi$ are finite. Thus  $\bar{S}_\psi = \mS_{\psi}$ and we use the latter. 
\end{proof}

\subsection{Packets} \label{section on packets} Here, we introduce~$A$-packets of representations associated to Arthur parameters, and the character identities relating their traces to those of corresponding representations for endoscopic groups. 
\subsubsection{Local Arthur Packets}\label{local arthur packets - definition} Let $(G, \xi)$ be a unitary group over a local field. The main local results of Mok \cite[Theorem 2.5.1]{Mok} and Kaletha-Minguez-Shin-White \cite[Theorem 1.6.1]{KMSW} associate to each Arthur parameter $\psi \in \PsiG$ a finite set $\Pi_\psi$ of irreducible unitary representations of~$G(F)$ called a local Arthur packet. This packet~$\Pi_{\psi}$ is empty if $\psi$ is not relevant, and contains only tempered representations when~$\psi$ is bounded. Each nonempty~$\Pi_{\psi}$ is equipped with a pairing \begin{equation} \label{local pairing} \pa{\;}{\;}: \Spsi \times \Pi_\psi \to \{\pm 1\}. \end{equation}In this way, every~$\pi \in \Pi_\psi$ gives rise to a character of~$\Spsi$. Unramified representations correspond to the trivial character.  The pairing depends on the triple $(G, \xi, z)$ realizing $G$ as a pure inner twist, as discussed in Remark \ref{remark about pure inner forms}.

For~$F$ archimedean, all~$\pi \in \Pi_{\psi}$ have the same infinitesimal character. We recall how to compute it from ~$\phi_{\psi}$ following~\cite{NP20Cohomo}. The group~$W_\BR$ is an extension of~$\BC^\times$ by the group~$\langle \sigma \rangle$ of order~$2$. For each~$\psi$, there is a torus~$\hat{T} \in \hat{G}$ such that \[ \phi_{\psi} \mid_{\BC^\times}(z) = z^{\mu}\bar{z}^{\nu}, \quad \mu, \nu \in X_*(\hat{T}). \] The infinitesimal character of the representations~$\pi \in \Pi_{\psi}$ is then identified with~$\mu \in X_*(\hat{T}) \simeq X^*(T)$ via the Harish-Chandra isomorphism.

\begin{lemma} \label{lemmainfcharremainsregular}
Let ~$\psi \in \Psi(G)$ be an archimedean Arthur parameter with regular infinitesimal character, and let~$H \in \mE(G)$ be such that~$\psi = \embend \circ \psi^H$ for~$\psi^H \in \Psi(H)$. Then the infinitesimal character of~$\psi^H$ is also regular.
\end{lemma}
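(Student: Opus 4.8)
The plan is to reduce the statement about $H$ to a purely combinatorial fact about infinitesimal characters and the block-diagonal embedding $\prod_i GL(N_i,\BC) \hookrightarrow GL_N(\BC)$, using the recipe for the infinitesimal character recalled just before the lemma. First I would unwind the definitions: an endoscopic datum $H \in \mE(G)$ is a product $U(N_1) \times U(N_2)$ with $N_1 + N_2 = N$, and the relevant $L$-embedding is $\eta_{\underline{\kappa}}$ from \eqref{embedding of endoscopic groups}. After composing with $\emb$ (resp. the analogous base-change embedding for $H$), the diagram \eqref{eqn embedding into GLN} tells us that the parameter $\psi^N$ of $G$ and the parameter $(\psi^H)^{N}$ of $H$ are related by the block-diagonal inclusion $GL(N_1,\BC) \times GL(N_2,\BC) \hookrightarrow GL(N,\BC)$, up to the twist by the characters $\chi_{\kappa_i}$, which are unitary and hence contribute nothing to the infinitesimal character. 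So it suffices to work on the general linear side.

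Next I would make the infinitesimal character explicit. Restricting $\phi_{\psi}$ to $\BC^\times \subset W_\BR$ and passing to the $GL_N$-side, $\phi_{\psi^N}\mid_{\BC^\times}(z) = z^{\mu}\bar z^{\nu}$ with $\mu = (\mu_1,\dots,\mu_N)$, and the infinitesimal character of $\Pi_\psi$ is $\mu$, viewed in $X^*(T)$ modulo the Weyl group $S_N$; regularity means the $\mu_j$ are pairwise distinct. Because the embedding relating $\psi^H$ to $\psi$ is block-diagonal, the multiset of exponents $\{\mu_j\}$ for $\psi^N$ is exactly the disjoint union of the multiset of exponents for $(\psi^H_1)^{N_1}$ on $GL(N_1)$ and those for $(\psi^H_2)^{N_2}$ on $GL(N_2)$. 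If the $N$ exponents of $\psi$ are pairwise distinct, then so is any sub-multiset, and in particular the $N_1$ exponents coming from the first block and the $N_2$ exponents coming from the second block are each pairwise distinct. That is precisely regularity of the infinitesimal character of $\psi^H_1$ on $U(N_1)$ and of $\psi^H_2$ on $U(N_2)$, hence of $\psi^H$ on $H = U(N_1) \times U(N_2)$. I would phrase this last step carefully as: regularity for a parameter into $^LU(M)$ is detected on the $GL(M,\BC)$-side after base change, using the fact (from \S\ref{How local parameters of unitary groups are the image of base change. }) that the exponents on the two $GL(N)$-blocks of a conjugate self-dual parameter are $\{\mu\}$ and $\{-\mu\}$, so distinctness of one forces distinctness of the other.

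The main (modest) obstacle is bookkeeping rather than mathematics: one must check that passing through the two base-change embeddings $\emb$ (for $G$) and the $\eta_{\kappa_i}$ (for the factors of $H$) does not shuffle or rescale the exponents in a way that could destroy distinctness — i.e. that the twisting Hecke characters $\chi_{\kappa_i}$ appearing in \eqref{embedding of endoscopic groups} are unitary and thus have zero infinitesimal-character contribution, and that the $w_c$-component of the embedding only permutes coordinates. Both are immediate from the explicit formulas, but they are the only things that genuinely need to be said. I would also remark that the argument uses only the shape of $\mE(G)$ for unitary groups (products of two unitary groups, embedded block-diagonally) and the description of the archimedean infinitesimal character via $\phi_\psi$, so it is entirely formal once those inputs are in place.
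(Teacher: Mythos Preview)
Your approach is essentially the paper's: restrict $\phi_\psi$ to $\BC^\times \subset W_\BR$, observe that the $L$-embedding $\eta_{\underline{\kappa}}$ acts block-scalarly there, and conclude that the $N$ distinct exponents of $\psi$ partition into the exponents of $\psi^H_1$ and $\psi^H_2$, which are therefore distinct within each block. One small correction: unitarity of $\chi_{\kappa_i}$ does \emph{not} force its archimedean component to have zero exponent (e.g.\ $z\mapsto z/|z|$), so ``unitary $\Rightarrow$ no infinitesimal-character contribution'' is false as stated; the point that actually does the work---and which both you and the paper rely on---is that on $\BC^\times$ the map $\eta_{\underline{\kappa}}$ is \emph{scalar} on each $GL(N_i)$-block, so whatever shift it induces is constant across that block and cannot destroy distinctness within it.
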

\begin{proof}
By assumption,~$\psi((z,1),I) = z^\mu\bar{z}^\nu$ and the weights appearing in~$\mu$ are distinct. The parameter~$\phi$ factors through~$\embend: {^L}H \to {^L}G.$ Referring to~\eqref{the choice of bas change morphism}, the restriction of $\embend$ to $\BC^\times \subset W_\BR \subset {^L}H$ is trivial, since it factors through~$\chi_{\kappa}$, which takes values in~$\pm 1$. Thus the weights of the $z$-part of~$\phi^H\mid_{\BC^\times}$ are also distinct, and the infinitesimal character of the corresponding packet is regular. 
\end{proof}
For any local~$F$, we record a result initially proved by Mok about the central character of the representations in the packet~$\Pi_{\psi}$ for the quasisplit group~$G^*$.  
\begin{proposition}[Proposition 1.5.2, 2.\cite{KMSW}] \label{proposition on central character} The Langlands parameter of the central character~$\omega_\pi: Z(G^*)(F) \to \BC^\times$  of any~$\pi \in \Pi_{\psi}$ is given by the composition \[ L_F \xrightarrow[]{\phi_{\psi}}{^L}G^* \xrightarrow[]{(\det \rtimes \id) \circ \emb} \BC^\times \rtimes W_F. \]
\end{proposition}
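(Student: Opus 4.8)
The plan is to reduce the statement to the well-known compatibility of the local Langlands correspondence for $GL_N$ with central characters, by way of the defining property of the packets $\Pi_\psi$ for the quasisplit group $G^* = U(N)$. Write $\psi^N := \emb \circ \psi \in \Psi(\GN)$ and let $\pi_{\psi^N}$ be the associated $\theta$-stable irreducible representation of $GL_N(E)$, where $\theta$ is the automorphism exhibiting $\GN = \Res^E_F GL_N$ as a base change situation. By Mok's local classification (Theorem~2.5.1 of \cite{Mok}) the packet $\Pi_\psi$ is characterized by a twisted endoscopic identity of the shape $\sum_{\pi \in \Pi_\psi} \langle s_\psi, \pi\rangle\, \tr\,\pi(f) = \Theta_{\psi^N}^\theta(\tilde f)$, where the right-hand side is the $\theta$-twisted character of $\pi_{\psi^N}$ and $\tilde f$ is a transfer of $f$.

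\emph{Step 1: the members of $\Pi_\psi$ share a central character.} The $\theta$-twisted character $\Theta_{\psi^N}^\theta$ transforms under the $\theta$-twisted action of $Z(GL_N(E)) = E^\times$ by $\omega_{\pi_{\psi^N}}$. Since $Z(G^*)(F) = \{z \in E^\times : z\,\sigma(z) = 1\}$ sits inside $E^\times$ compatibly with $\theta$-twisted conjugation, restricting the identity to test functions supported near $Z(G^*)(F)$ shows that its right-hand side transforms under $Z(G^*)(F)$ by a single explicit character $\omega$. As the signs $\langle s_\psi, \pi\rangle$ are nonzero and the distributions $\tr\,\pi(\cdot)$, $\pi \in \Pi_\psi$, are linearly independent, every $\pi \in \Pi_\psi$ has $\omega_\pi = \omega$. (Equivalently, one may observe that the members of $\Pi_\psi$ all have the same supercuspidal support, read off from $\psi$, hence the same central character, and then compute it for one convenient member.)

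\emph{Step 2: identification of $\omega$.} The local Langlands correspondence for $GL_N$ is compatible with central characters, and it intertwines $\otimes\,\nu(m)$ and isobaric sums with the corresponding operations on parameters; hence $\omega_{\pi_{\psi^N}}$ has Langlands parameter $\det \circ \phi_{\psi^N}$. Directly from the definitions one has $\phi_{\psi^N} = \emb \circ \phi_\psi$, so $\det \circ \phi_{\psi^N} = (\det \rtimes \id) \circ \emb \circ \phi_\psi$ as a homomorphism $W_F \to \BC^\times \rtimes W_F$. It then remains to verify that restricting this character of $E^\times = Z(GL_N(E))$ along $Z(G^*)(F) \hookrightarrow E^\times$ recovers $\omega$, i.e. the claimed Langlands parameter under the identifications $Z(G^*) \simeq U_{E/F}(1)$ and ${}^L Z(G^*) = \BC^\times \rtimes W_F$. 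This is a direct computation using the explicit formulas \eqref{the choice of bas change morphism} for $\emb$ on $W_E$ (where $\chi_\kappa$ enters) and on the class $w_c$ (where $\kappa$ and $\det \Phi_N$ enter), together with the norm map $\norm$ and the description $Z(G^*)(F) = \{z : z\,\sigma(z) = 1\}$.

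\emph{Archimedean case and the main difficulty.} For $F$ archimedean the same argument applies, using twisted base change for $GL_N$ over $E \otimes_F \BR$; alternatively one argues directly from the recipe for the infinitesimal character of $\Pi_\psi$ recalled above together with the compatibility of the Adams--Johnson construction with central characters. I expect the only real obstacle to lie in Step~2: the bookkeeping with $\chi_\kappa$, $\kappa$, $\det \Phi_N$, and the comparison of centers must be carried out precisely enough to guarantee that no spurious twist is introduced, so that the parameter of $\omega_\pi$ is exactly $(\det \rtimes \id) \circ \emb \circ \phi_\psi$. A secondary point is to ensure that in Step~1 the local theorems of Mok and Kaletha--Minguez--Shin--White are invoked in a form strong enough to pin down the central character of every member of the (possibly non-tempered) packet $\Pi_\psi$ at once, rather than merely to produce the packet.
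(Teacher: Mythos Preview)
The paper does not give its own proof of this proposition: it is stated as a citation, attributed to Mok and quoted via \cite[Proposition~1.5.2,~2]{KMSW}, with no argument supplied. So there is nothing in the paper to compare your proposal against beyond the bare reference.

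Your outline is a plausible sketch of how one would argue the result from first principles, and it is in the spirit of how Mok proves it: one reduces to the central-character compatibility of local Langlands for $GL_N$ via the defining twisted character identity for $\Pi_\psi$. A few comments on the proposal itself. In Step~1, the linear-independence argument as written is slightly loose: the twisted transfer identity is an equality of distributions on $\mathscr{H}(G^*)$ and $\mathscr{H}(\GN)$ related by transfer, not a pointwise identity on group elements, so ``restricting to test functions supported near $Z(G^*)(F)$'' needs to be made precise (e.g.\ by evaluating both sides on $f$ and on its central translate $f_z(g) := f(z^{-1}g)$ and using that transfer is equivariant for the inclusion $Z(G^*)(F) \hookrightarrow Z(GL_N(E))$). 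Your parenthetical alternative via common supercuspidal support is cleaner for tempered $\psi$ but does not immediately cover the non-tempered case, where members of $\Pi_\psi$ need not share an $L$-packet. In Step~2 you correctly flag the real content: checking that the map ${^L}G^* \to {^L}Z(G^*)$ induced by $(\det \rtimes \id)\circ\emb$ is exactly the dual of the inclusion $Z(G^*) \hookrightarrow G^*$, with the $\chi_\kappa$ and $\kappa\Phi_N$ contributions cancelling appropriately. None of this is wrong, but since the paper is content to cite the result, your proposal is strictly more than what the paper provides.
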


\subsubsection{Global Arthur Packets}
 Let~$\psi \in \PsiG$ be global with localizations~$\psi_v$.  The global Arthur packet~$\Pi_{\psi}$ is then defined as \[  \Pi_{\psi} = \left\{ \pi = \ten_v \pi_v  \mid \pi_v \in \Pi_{\psi_v}, \; \ip{\cdot}{\pi_v}_{\psi_v} = 1 \text{ for almost all $v$} \right\}. \] It is equipped with a pairing \begin{equation}\label{global pairing} \ip{\;}{\;}_\psi:  \mS_{\psi} \times \Pi_{\psi} \to \{\pm 1\}, \quad \ip{\;}{\pi}_\psi = \prod_v \ip{\;}{\pi_v}_{\psi_v} \end{equation} determined by the maps $\mS_{\psi} \to \mS_{\psi_v}$ induced by localization.  We note once again that this pairing depends on the full inner twist $(G, \xi)$. However, the local dependence on the pure inner twist, i.e. the dependency on the cocycle $z$ appearing in the local definition of the pairing, cancels out globally. This is detailed in \cite[\S 1.7]{KMSW}.

\subsubsection{Test Functions} \label{SectionOnTestFunctions}
Continuing with~$F$ global,  we fix a maximal compact subgroup~$K$ of~$G(\BAF)$. The group~$K$ determines a maximal compact subgroup~$K_v \subset G_v$ at each place~$v$: we choose~$K$ so that $K_v$ is hyperspecial at all the unramified~$v$. We also fix for each~$v$ a Haar measure~$\mu_v$ on~$G_v$ satisfying~$\mu_v(K_v) = 1$, and a corresponding measure $\mu = \prod_v \mu_v$ on~$G(\BAF)$.  WeThe local Hecke algebra~$\mathscr{H}(G_v)$ consists of smooth, compactly supported, left and right~$K_v$-finite functions on~$G_v$. We will call its elements local test functions. The global Hecke algebra is the restricted product~$\mathscr{H}(G) = \ten_v' \mathscr{H}(G_v)$: it consists of smooth, compactly supported,~$K$-finite functions.  Each such test function is a finite sum of factorizable test functions of the form~$f = \prod_v f_v$, where each~$f_v \in \mathscr{H}(G_v)$ and all but finitely many~$f_v$ are the characteristic function of~$K_v$.  

For $\pi_v$ a smooth, admissible representation of $G_v$, each $f_v \in \mH(G_v)$ gives rise to an operator $\pi_v(f_v)$ on th underlying vector space of $\pi_v$, defined as follows: \[  \pi_v(f_v)(x) =   \int_{G_v} f_v(g) \pi_v(g)(x) d\mu_v. \] This operator is of trace class, and we denote its trace by~$\tr \pi_v(f_v)$. Likewise globally, the algebra~$\mathscr{H}(G)$ acts on~$\Aut(G(F) \dom G(\BAF))$ and on its irreducible constituents~$\pi$. We denote the trace of convolution by~$f$ by~$\tr R(f)$ (when considering the right-regular representation on~$\Aut(G(F), G(\BAF))$) or by~$\tr \pi(f)$ (when~$f$ acts on~$\pi$ irreducible). 

\subsubsection{Stable Distributions and Transfer}
We introduce stable distributions on the local and global Hecke algebras, following Sections 3.1 and 4.2 of~\cite{Mok} respectively. Let~$\gamma \in G(F_v)$ and  let~$G(F_v)_{\gamma}$ be its centralizer. For~$f \in \mathscr{H}(G(F_v))$, let~$ f_{G(F_v)}(\gamma) := \int_{G(F_v)/G(F_v)_{\gamma}} f(g\gamma g^{-1})d\mu_v $ be the \emph{orbital integral} associated to~$\gamma$ and~$f$. It only depends on the $G(F_v)$ conjugacy class of $\gamma$. 

We now introduce transfer, which makes use of  \emph{stable conjugacy classes}: the union of the finitely many conjugacy classes of~$G(F_v)$ that are~$G(\bar{F_v})$-conjugate. Let~$G(F_v)$ first be a quasisplit unitary group. Each stable conjugacy class~$\delta$ gives rise to a linear functional \begin{equation} \label{recipe for transfer} f_v^G(\delta) = \sum_{\gamma} \Delta_v(\delta, \gamma) f_{G(F_v)}(\gamma), \end{equation} 
where the sum is taken over representatives $\gamma$ of all the conjugacy classes of~$G(F_v)$. The factor~$\Delta(\delta, \gamma)$ is equal to~$1$ if~$\gamma \in \delta$ and to~$0$ otherwise.  This construction gives a map from $\mathscr{H}(G_v)$ to functions on stable conjugacy classes. Denote the image of this map by $\mS(G_v)$. A linear functional on $\mathscr{H}(G_v)$ is said to be \emph{stable} if it factors through $\mS(G_v)$. 

Now let $G_v$ be an arbitrary unitary group. For each endoscopic group $H_v$ of~$G_v$, the construction of transfer factors by Langlands-Shelstad \cite{LS87} and Kottwitz-Shelstad~\cite{KS99} gives rise to maps~$\mathscr{H}(G_v) \to \mS(H_v)$. The transfer factors are a significantly more delicate generalization of the $\Delta(\delta, \gamma)$ above; in particular, their normalization in \cite[\S 1.1.2]{KMSW} (and thus the notion of transfer) depends on the choice of pure inner form as in Remark \ref{remark about pure inner forms}. This provides a system of maps from the Hecke algebras to their stable counterparts, and two functions~$f_v \in \mathscr{H}(G_v)$ and~$f_v^{H_v} \in \mathscr{H}(H_v)$ will be said to form a \emph{transfer pair} if their images under their respective maps to~$\mS(H_v)$ agree. Although~$f_v^{H_v}$ is not uniquely determined by~$f_v$, we may abuse terminology and refer to a choice of~$f_v^{H_v}$ as the transfer of~$f_v$. 

To extend the notion of transfer to global test functions, it is first necessary to know that the transfer of characteristic functions of maximal compact subgroups of~$G_v$ are the corresponding functions on~$H_v$. This is the fundamental lemma, now a theorem due to Laumon-Ng\^o \cite{LN} in the case of unitary groups, and to Ng\^o \cite{Ngo} in general, after reductions by Waldspurger \cite{Wa06, Wa08}.
\begin{theorem}[Fundamental Lemma] \label{Fundamental Lemma}
	Let~$G_v$ and~$H_v$ be unramified reductive groups over a non-archimedean local field~$F_v$. Let $K(G_v)$ and $K(H_v)$ be respective choices of hyperspecial maximal compact subgroups. Then their characteristic functions $f_v = 1_{K(G_v)}$ and $f_v^{H_v} = 1_{K(H_v)}$ form a transfer pair. 
\end{theorem}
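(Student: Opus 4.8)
The plan is not to reprove the fundamental lemma from scratch but to appeal to the geometric proof of Ng\^o \cite{Ngo}, specialised to unitary groups by Laumon--Ng\^o \cite{LN}, together with the reductions of Waldspurger \cite{Wa06,Wa08}; I sketch that route.

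First I would pass from the group statement to its Lie-algebra counterpart: using a Cayley or truncated exponential map one transports the identity of stable and $\kappa$-twisted orbital integrals from a neighbourhood of the identity in $G_v$ and $H_v$ to a neighbourhood of $0$ in $\fg_v$ and $\fh_v$, and a semisimple-descent argument reduces a general conjugacy class to descent data of the same shape. Waldspurger's further reductions --- including the ``non-standard'' variant comparing $\fsl$ with $\fso$, which does not intervene for unitary groups --- then let one assume that the residue characteristic is large and all data unramified, which is exactly our situation ($H_v = U(N_1)\times U(N_2)$ inside an unramified unitary $G_v$).

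Next I would globalise. One chooses a smooth projective curve $X$ over a finite field $\BF_q$ with a place whose completion is $F_v$; by a product formula and the Grothendieck--Lefschetz trace formula, sums of orbital integrals over a stable class in $\fg_v$ become traces of Frobenius on the stalks of $R h^G_{*}\overline{\BQ}_\ell$ for the Hitchin fibration $h^G\colon \mathcal{M}^G \to \mathcal{A}^G$. The endoscopic group $H$ enters through the action of the component group of the relevant Picard stack on $\mathcal{M}^G$: the stable part is the invariant part and the $H$-part is the $\kappa_{s}$-isotypic part, where $s$ is the semisimple element attached to $H$ as in~\eqref{map sending H-parameter to G-parameter}. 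Matching orbital integrals then amounts to an isomorphism, over the elliptic (anisotropic) locus of $\mathcal{A}^G$, between the $\kappa_{s}$-part of $R h^G_{*}\overline{\BQ}_\ell$ and the stable part of $R h^H_{*}\overline{\BQ}_\ell$.

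The hard part will be establishing this isomorphism of perverse sheaves. Over the generic locus (squarefree discriminant) the Hitchin fibres are torsors under compactified Jacobians and the comparison can be written down directly; to extend it to the whole base one needs Ng\^o's \emph{support theorem}, which forces the simple perverse constituents of $R h^G_{*}\overline{\BQ}_\ell$ over the anisotropic locus to have full support. This rests on the codimension estimates for $\mathcal{A}^G$ governed by the defect invariant $\delta$, together with purity and the decomposition theorem; the delicate points are proving that codimension bound (the ``$\delta$-regularity'' of the elliptic locus) and working throughout with the cohomology of the Artin stack $\mathcal{M}^G$ while keeping careful track of the $\pi_0$ of the Picard stacks, so that the $\kappa$-decompositions are matched correctly on both sides. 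Once the geometric identity is in hand, feeding it back through the reductions of the first step yields the claimed transfer identity for $1_{K(G_v)}$ and $1_{K(H_v)}$.
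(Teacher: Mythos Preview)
Your sketch is in line with the paper's own treatment: the paper does not give a proof of this statement at all, but simply records it as a theorem due to Laumon--Ng\^o~\cite{LN} for unitary groups and Ng\^o~\cite{Ngo} in general, after the reductions of Waldspurger~\cite{Wa06,Wa08}. Your outline follows precisely that route (Waldspurger's reduction to the Lie algebra and to large residue characteristic, globalisation via the Hitchin fibration, and Ng\^o's support theorem), so there is nothing to compare---you have sketched the very proof the paper is citing.
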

With this in mind, the transfer of a factorizable global test function $f = \prod_v f_v \in\Heckev$ is the product $f^{H} = \prod_{v} f_v^{H_v}$ of its transfers, a definition extended linearly to all of $\mathscr{H}(G)$. 
We will likewise define the global stable Hecke algebra~$\mS(G^*) := \ten'_v\mS(G^*_v)$. A linear functional on $\mathscr{H}(G^*)$ is \emph{stable} if it factors through $\mS(G^*)$. 
\subsubsection{Local Character Identities} \label{charid} The transfer of representations between $G$ and its endoscopic groups $H$ is encoded via identities between linear combinations of characters; the coefficients are determined by the pairings~\eqref{local pairing}.  We start with distributions $f^G(\psi)$ on $\mathscr{H}(G)$. Let $F$ be local and $G^*/F$ be a quasisplit unitary group or a product thereof, and $\psi$ be an Arthur parameter of $G^*$. Then Mok attaches a stable linear form to $\psi$.
\begin{theorem}[Theorem 3.2.1 (a), \cite{Mok}] Let $\psi \in \Psi(G^*)$. Then there exists a unique stable linear form $f \mapsto f^{G^*}(\psi)$ on $\mathscr{H}(G^*)$,  determined by transfer properties to $GL_N$. If $G^* = G^*_1 \times G^*_2$ and $\psi = \psi_1 \times \psi_2$, then $f^{G^*}(\psi) = f^{G^*_1}(\psi_1) \times f^{G^*_2}(\psi_2)$.
\end{theorem}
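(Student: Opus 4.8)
The plan is to reduce everything to twisted base change from $\GN = \Res^E_F GL_N$, in the manner of Arthur and Mok. Equip $\GN$ with the automorphism $\theta$ for which $U(N)$, with the embedding $\emb$, is an elliptic twisted endoscopic datum of $(\GN,\theta)$ — and likewise $G^*$ with the corresponding composite embedding when $G^*$ is a product; explicitly $\theta(g) = \Phi_N\,\sigma(g)^{-t}\,\Phi_N^{-1}$ on $\GN(F) = GL_N(E)$. On the $GL_N$ side, the parameter $\psi^N := \emb\circ\psi = \boxplus_i \mu_i\boxtimes\nu(m_i)\in\tilde{\Psi}(N)$ determines a $\theta$-stable representation $\pi_{\psi^N}$ of $GL_N(E)$, namely the (irreducible, unitary) isobaric sum of the Speh representations attached to the pairs $(\mu_i,m_i)$; using the $\theta$-stable Whittaker datum of $GL_N$ one pins down the canonical self-intertwiner $A_{\psi^N}$ extending $\pi_{\psi^N}$ to $GL_N(E)\rtimes\theta$, and thereby a $\theta$-twisted invariant distribution $\tilde f\mapsto \tilde f^N(\psi^N) := \tr\big(\pi_{\psi^N}(\tilde f)\,A_{\psi^N}\big)$ on the twisted Hecke algebra of $\GN$.

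The next step is to \emph{descend} this distribution to $G^*$. Langlands--Shelstad and Kottwitz--Shelstad twisted transfer gives a map $\tilde f\mapsto\tilde f^{G^*}$ from the twisted Hecke algebra of $\GN$ into the stable Hecke algebra $\mS(G^*)$, and I want to \emph{define} the sought linear form by the requirement $(\tilde f^{G^*})(\psi) = \tilde f^N(\psi^N)$ for every $\tilde f$. Two facts make this legitimate. First, the twisted transfer $\tilde f\mapsto\tilde f^{G^*}$ is surjective onto $\mS(G^*)$ (a consequence of the fundamental lemma together with descent, the norm map being onto the relevant stable semisimple classes): this both lets one extend the prospective form to all of $\mS(G^*)$ and makes any such form unique. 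Second, $\tilde f\mapsto\tilde f^{G^*}$ has the same kernel as the map recording all $\theta$-twisted stable orbital integrals of $\tilde f$ — the Whittaker-normalised twisted transfer factors are a fixed nonzero constant along the norm correspondence — and the distribution $\tilde f\mapsto\tilde f^N(\psi^N)$, being the twisted character of a $\theta$-stable admissible representation of $GL_N(E)$, kills that kernel: on $GL_N$ there is no proper twisted endoscopy (twisted centralisers are connected with vanishing $H^1$, so twisted and twisted-stable conjugacy coincide), hence every $\theta$-invariant distribution is $\theta$-stable. Therefore $\tilde f^N(\psi^N)$ factors through $\tilde f\mapsto\tilde f^{G^*}$, and composing with the surjection $f\mapsto f^{G^*}$, $\mathscr{H}(G^*)\twoheadrightarrow\mS(G^*)$, yields the stable linear form $f\mapsto f^{G^*}(\psi)$ characterised by transfer to $GL_N$; uniqueness is immediate.

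The product statement is then formal. If $G^* = G^*_1\times G^*_2$, $\psi = \psi_1\times\psi_2$ and $N = N_1+N_2$, then $\GN$ contains the block-diagonal $G(N_1)\times G(N_2)$, compatibly with $\theta$ up to the sign bookkeeping encoded in $\eta_{\underline{\kappa}}$; the representation factors as $\pi_{\psi^N} = \pi_{\psi_1^{N_1}}\boxtimes\pi_{\psi_2^{N_2}}$ with $A_{\psi^N} = A_{\psi_1^{N_1}}\otimes A_{\psi_2^{N_2}}$, so the twisted character multiplies on factorisable functions, and twisted transfer is multiplicative; hence $f^{G^*}(\psi) = f^{G^*_1}(\psi_1)\,f^{G^*_2}(\psi_2)$ for $f = f_1\otimes f_2$, extended by linearity.

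The formal \emph{descent} above is not where the difficulty lies; the real content is imported. One needs the construction, unitarity and canonical $\theta$-normalisation of the Speh representation $\pi_{\psi^N}$ (from the local classification for $GL_N$ and the description of its unitary dual) and the surjectivity of twisted base-change transfer onto $\mS(G^*)$ (from the fundamental lemma plus descent). With these in hand the statement above is essentially soft; the genuinely deep companion results of \cite{Mok} — the local packets $\Pi_\psi$, their pairings, and the endoscopic character identities — are the ones requiring the stabilisation of the twisted trace formula, but they are not needed here. In the present paper it therefore suffices to invoke \cite[Theorem 3.2.1]{Mok}.
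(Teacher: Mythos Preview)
The paper does not prove this statement at all; it is stated as a citation of \cite[Theorem~3.2.1(a)]{Mok} and used as a black box. Your proposal correctly identifies this in its final sentence, and the sketch you give of the twisted-endoscopic descent from $\GN$ is indeed the mechanism behind Mok's construction, so there is nothing to correct.
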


We will not discuss in detail the character identities relating $f^{G^*}(\psi)$ to traces on~$GL_N$, save for reminding the reader that this distribution is related to the trace~$\tr \pi_{\psi, N}(f)$ where~$\pi_{\psi, N}$ corresponds to~$\psi$ under the Local Langlands Correspondence. We will focus on the relation between the~$f^H(\psi^H)$ for the groups~$H \in \mE(G)$ and the characters of representations in~$\Pi_{\psi}$. If~$G = G^*$, these identities were established by Mok, and for inner forms by Kaletha-Minguez-Shin-White. Recall that~$s_\psi$ is the distinguished element of~$\mS_{\psi}$ defined in~\eqref{spsi}.  

\begin{theorem}[Theorem 3.2.1 (b), \cite{Mok}] \label{lci for qs}
	Let $G^*$ be a
	quasisplit unitary group, let~$\psi \in \Psi(G^*)$, and let~$\Pi_{\psi}$ be the associated Arthur packet 
	equipped with the pairing of equation~\eqref{local pairing}. Let $s_H \in \mS_{\psi}$ be such that $(H, \psi^H)$ correspond to $(\psi, s_H)$ under the correspondence of Lemma \ref{lemma on endoscopic bijection}. Then for a transfer pair $(f, f^H)$ we have \[ f^H(\psi^H) = \sum_{\pi \in \Pi_{\psi} }  \ip{s_\psi s_H}{\pi}  \tr \pi (f). \]
\end{theorem}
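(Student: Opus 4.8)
The plan is to follow Arthur's template, adapted to unitary groups by twisted base change: the identity is not established in isolation but as one component of a simultaneous induction on $N$ whose other components are part~(a) (construction of the stable linear forms $f\mapsto f^{G^*}(\psi)$), the global classification of the discrete spectrum of $U(N)$, the stable multiplicity formula, and the associated global character identities. The vehicle is the comparison of the stabilized invariant trace formula for $G^*$ with the stabilized $\theta$-twisted invariant trace formula for $G(N)=\Res^E_F GL_N$, where $\theta$ is the automorphism whose fixed points cut out $U(N)$.

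First I would set up this comparison. The elliptic $\theta$-twisted endoscopic data for $G(N)$ are precisely the pairs $(U(N_1)\times U(N_2),\eta_{\underline{\kappa}})\in\tilde{\mE}(N)$, so the stabilized twisted trace formula for $GL_N$ expands as a sum of stable distributions on products of quasisplit unitary groups; matching spectral sides, using the fundamental lemma (Theorem~\ref{Fundamental Lemma}) together with its weighted and $\theta$-twisted-weighted variants — the last being the input still owed by Chaudouard--Laumon flagged in the Conditionality subsection — yields global stable identities relating the $\psi$-part $S^{G^*}_{\mathrm{disc},\psi}$ to the $\psi^N$-contribution on $GL_N$. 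Part~(a) is the local shadow: $\psi^N=\emb\circ\psi$ corresponds under local Langlands for $GL_N$ to a conjugate-self-dual Speh-type representation, hence is $\theta$-stable and carries a twisted character, and one defines $f^{G^*}(\psi)$ as the pullback of this twisted character along twisted transfer; the content is that it depends only on the stable orbital integrals of $f$, which follows from the (twisted, weighted) fundamental lemma and descent. For $G^*=G^*_1\times G^*_2$ one takes the product of the stable forms.

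The heart of (b) is then a global-to-local passage. Given a local Arthur parameter, embed it into a global parameter $\psi$ for a global quasisplit unitary group that localizes to it at the place in question and is tempered, or otherwise already covered by the inductive hypotheses or by the archimedean case, at every other place. Apply the stable multiplicity formula and the stabilization $\Idisci(f)=\sum_H \SHdisci(f^H)$ of \eqref{Stable Trace Formula Intro}, expand the global packet $\Pi_\psi$ with its pairing $\ip{\cdot}{\pi}_\psi=\prod_u\ip{\cdot}{\pi_u}_{\psi_u}$, freeze the (known) factors away from the distinguished place, and let the test function vary there; linear independence of characters then isolates the asserted identity $f^H(\psi^H)=\sum_{\pi\in\Pi_\psi}\ip{s_\psi s_H}{\pi}\,\tr\pi(f)$, the correspondence $(H,\psi^H)\leftrightarrow(\psi,s_H)$ being Lemma~\ref{lemma on endoscopic bijection}. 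The element $s_\psi=\psi(1,\mathrm{diag}(-1,-1))$ appears because the packet pairing is normalized to absorb the global sign character $\epsilon_\psi$ and because $s_\psi$ records the shift between the Langlands and Arthur parametrizations of the packet. The base of the induction is archimedean: there $\Pi_\psi$ is an Adams--Johnson packet and the identities reduce to Shelstad's real endoscopy for the tempered constituents together with the explicit sign computations of Adams--Johnson and Arthur, which, with the $p$-adic tempered case proved earlier, anchor the global argument.

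The main obstacle is exactly this entanglement: (b) cannot be decoupled from part~(a), the global classification, and the stable multiplicity formula, so the whole package must be carried through one carefully stratified induction on $N$ — refined by the number of simple summands of $\psi^N$ and by whether $\psi$ is simple, elliptic, or tempered — with meticulous bookkeeping of transfer-factor normalizations and of the signs $s_\psi$ and $\epsilon_\psi$; and the entire argument rests on the stabilized twisted trace formula for $GL_N$, hence on the twisted weighted fundamental lemma.
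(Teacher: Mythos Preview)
The paper does not prove this statement: it is quoted verbatim as Theorem~3.2.1(b) of Mok~\cite{Mok} and used as a black box, so there is no ``paper's own proof'' to compare against. Your outline is a faithful high-level sketch of how Mok (following Arthur) actually establishes the identity---via the long simultaneous induction on $N$ coupling the stabilized trace formula for $G^*$ with the stabilized twisted trace formula for $G(N)$, globalizing local parameters, and isolating the local identity by varying the test function at one place---but none of that argument appears in, or is expected by, the present paper.
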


\begin{theorem}[Theorem 1.6.1, \cite{KMSW}] \label{local character identities for general unitary groups} Let $(G,\xi)$ be an inner form of $U(N)$ and let $\psi$, $\Pi_{\psi}$, $H$, $s_H$, and $(f, f^H)$ be as above. Let $e(G)$ be the Kottwitz sign. Then \[ f^H(\psi^H) = e(G) \sum_{\pi \in \Pi_{\psi}} \ip{s_\psi s_H}{\pi}  \tr \pi (f).   \] 
\end{theorem}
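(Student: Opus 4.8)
The quasi-split case is Theorem~\ref{lci for qs} (Mok), so the substance of the statement is the passage to an arbitrary inner twist $(G,\xi)$ together with the appearance of the Kottwitz sign $e(G)$. I would argue by the standard \emph{local--global} method, which is in any case essentially forced: for an inner form the packet $\Pi_\psi$ and its pairing $\langle\cdot,\cdot\rangle$ are not defined intrinsically but are extracted from the global endoscopic classification, so this character identity must be proved in tandem with the classification itself. \textbf{Globalization.} By Lemma~\ref{classification of global inner forms}, realize the given local group $G/F_{v_0}$ as the localization at a single place $v_0$ of a global inner form $\dot G$ of $U_{E/F}(N)$ that is quasi-split at every $v\neq v_0$. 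Choose a global elliptic parameter $\dot\psi$ with $\dot\psi_{v_0}=\psi$, unramified outside a finite set $S\ni v_0$, and with the components $\dot\psi_v$ for $v\in S\setminus\{v_0\}$ chosen so that the packets $\Pi_{\dot\psi_v}$ and their character identities are already known (this is an induction on $N$ and on the number of summands of $\psi$), subject to the global conjugate-self-duality, ellipticity and relevance constraints. Fix a decomposable test function $\dot f=\prod_v \dot f_v$ with $\dot f_{v_0}=f$ and $\dot f_v=\mathbf 1_{K_v}$ for $v\notin S$.

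\textbf{Comparison of stabilizations.} Apply the stabilization of the $\dot\psi$-part of the discrete trace formula to $\dot G$ and to $\dot G^*$. The stable distributions $S^H_{\mathrm{disc},\dot\psi}$ attached to the endoscopic data $H\in\mE(\dot G)$, which coincides with $\mE(\dot G^*)$, are intrinsic to $H$ and hence common to the two groups; on the spectral side, the endoscopic classification for $\dot G^*$ (Mok) and the stable multiplicity formula express $I^{\dot G^*}_{\mathrm{disc},\dot\psi}$ and each $S^H_{\mathrm{disc},\dot\psi}(\dot f^H)$ in terms of the characters of $\mS_{\dot\psi}$ and the factorizable traces $\prod_v \tr\pi_v(\dot f_v)$. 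Feeding this into Arthur's stabilization $I^{\dot G}_{\mathrm{disc},\dot\psi}(\dot f)=\sum_{H\in\mE(\dot G)}\iota(\dot G,H)\,S^H_{\mathrm{disc},\dot\psi}(\dot f^H)$ and subtracting, place by place over $v\in S\setminus\{v_0\}$, the quasi-split identities of Theorem~\ref{lci for qs} already in hand, one is left with a single global identity in which the only unconstrained data is at $v_0$.

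\textbf{Descent to $v_0$ and the sign.} Now vary $\dot f_v$ for $v\in S\setminus\{v_0\}$. Using linear independence of the characters of the finitely many members of each $\Pi_{\dot\psi_v}$ together with surjectivity of transfer onto stable orbital integrals, strip off all places other than $v_0$ to obtain
\[ f^H(\psi^H)=c\sum_{\pi\in\Pi_\psi}\langle s_\psi s_H,\pi\rangle\,\tr\pi(f) \]
for a constant $c$ independent of $f$ (the $s_\psi$-shift is carried through from the normalization of the pairing and needs no new input). The constant $c$ is a product over all places of the local normalization constants of the transfer factors of the inner twist (Remark~\ref{remark about pure inner forms}); these are $1$ at every quasi-split $v\neq v_0$, and Kottwitz's global sign relation $\prod_v e(\dot G_v)=1$ then forces $c=e(\dot G_{v_0})=e(G)$. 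When $v_0$ is archimedean one instead substitutes Shelstad's real endoscopic character identities for the Adams--Johnson packets, whose normalization already carries the sign $(-1)^{q(G)-q(G^*)}$, so the bookkeeping there is identical.

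\textbf{Main obstacle.} The genuine difficulty is not any single computation but the fact that the argument cannot be run in isolation: the objects $\Pi_\psi$, the pairing $\langle\cdot,\cdot\rangle$, and even the $G$-relevance of $\dot\psi$ are outputs of the very induction this theorem belongs to, so one must organize a joint local--global induction in which every auxiliary statement invoked at the places of $S\setminus\{v_0\}$ is genuinely available one step earlier, and one must choose $\dot\psi$ so that these auxiliary packets are small enough for the linear-independence step while still matching $\psi$ at $v_0$ and meeting all the global constraints. This coordination, rather than the sign calculation, is the delicate heart of \cite{KMSW}, and — as noted in the introduction — is exactly where the available proof remains incomplete for non-generic $\psi$.
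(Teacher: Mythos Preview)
The paper does not prove this theorem at all: it is quoted verbatim as Theorem~1.6.1 of \cite{KMSW} and used as a black box, with the immediately following Remark~\ref{remark on conditionality} explicitly flagging that the proof in \cite{KMSW} is incomplete for non-bounded parameters and that the results are conditional on the forthcoming \cite{KMS}. So there is no ``paper's own proof'' to compare your proposal against.

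That said, your sketch is a reasonable outline of the local--global strategy that \cite{KMSW} actually pursues, and you correctly identify the genuine obstacle: the interlocking induction in which the packets, pairings, and relevance conditions you need at auxiliary places are themselves outputs of the classification being proved. Your final paragraph is exactly the point the present paper makes in its introduction and in Remark~\ref{remark on conditionality}. One small caution: your claim that the constant $c$ is pinned down to $e(G)$ purely by Kottwitz's product formula $\prod_v e(\dot G_v)=1$ is too quick --- the Kottwitz sign enters through the explicit normalization of transfer factors for the pure inner twist (cf.\ Remark~\ref{remark about pure inner forms}) and the comparison of Whittaker-normalized and $z$-normalized pairings, not merely as the residual factor after cancelling quasi-split places; the product formula is what makes the \emph{global} identity consistent, but the local identification $c=e(G_{v_0})$ requires the local analysis of the transfer-factor normalization carried out in \cite{KMSW}.
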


\begin{remark}\label{remark on conditionality}
Let us recall a discussion from the introduction: the proofs in~\cite{KMSW} are not given in full generality. For example, Theorem~\ref{local character identities for general unitary groups} is only proved for bounded parameters. The authors of \cite{KMSW} anticipate that the proof will appear in a pair of papers, the first of which \cite{KMS} should contain the results we use here. 
\end{remark}

\subsubsection{Local Packets for General Linear Groups} \label{Section on Local packets for GLN}

As discussed in \ref{section on quasisplit unitary groups}, if $F$ is local and corresponds to a place splitting in our global CM extension, then $G \simeq GL_N$.  In this situation the local Arthur packet and the pairing are especially simple. 

\begin{theorem}[Section 2, \cite{Mok}] \label{lemam on packets for $GL_N$}
	If $G = GL_N$ and $\psi$ is an Arthur parameter for $G$, then the packet $\Pi_{\psi}$ contains one element: the irreducible representation associated to $\phi_{\psi}$ by the local Langlands correspondence. The character $\ip{\;}{\pi_{\psi}}$ is trivial. 
\end{theorem}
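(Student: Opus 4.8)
\emph{Proof proposal.} The statement is essentially a repackaging of the classification of the admissible dual of $GL_N$ over a local field, so the plan is to make the construction of $\Pi_\psi$ in the $GL_N$ case explicit and then to observe that the group carrying the pairing attached to $\psi$ is connected, which forces the pairing to be trivial.

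First I would decompose $\psi\colon L_F\times SL_2(\BC)\to GL_N(\BC)$ into irreducible summands, $\psi=\bigoplus_i\bigl(\phi_i\boxtimes\nu(m_i)\bigr)$, with each $\phi_i$ a bounded irreducible $n_i$-dimensional parameter of $L_F$ and $\sum_i n_im_i=N$. Under the local Langlands correspondence for $GL_{n_i}$ \cite{HT01,He00,Sc13} each $\phi_i$ corresponds to a discrete series representation $\delta_i$ of $GL_{n_i}(F)$, and, following the M\oe glin--Waldspurger description of the residual spectrum of $GL_N$ \cite{MW89}, one attaches to $\phi_i\boxtimes\nu(m_i)$ the Speh representation $\mathrm{Speh}(\delta_i,m_i)$, the Langlands quotient of the standard module induced from $\delta_i\lvert\det\rvert^{(m_i-1)/2}\otimes\cdots\otimes\delta_i\lvert\det\rvert^{-(m_i-1)/2}$. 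By construction (Section~2 of \cite{Mok}, following \cite{A}) the packet $\Pi_\psi$ for $GL_N$ is the singleton $\{\pi_\psi\}$, where $\pi_\psi$ is obtained by parabolic induction from $\bigotimes_i\mathrm{Speh}(\delta_i,m_i)$; so the first point to verify is that $\pi_\psi$ is genuinely irreducible and coincides with the representation attached to $\phi_\psi$. Irreducibility is standard: a product of Speh representations built from unitary discrete series is irreducible and unitary, and here all the inducing data are tempered, so $\pi_\psi$ is irreducible. Its Langlands parameter is the direct sum of the parameters of the factors $\mathrm{Speh}(\delta_i,m_i)$, namely $\bigoplus_i\bigoplus_{j=0}^{m_i-1}\phi_i\,\lvert\cdot\rvert^{(m_i-1)/2-j}$; this is exactly $\phi_\psi$, which by definition sends $\sigma$ to $\psi\bigl(\sigma,\mathrm{diag}(\lvert\sigma\rvert^{1/2},\lvert\sigma\rvert^{-1/2})\bigr)$. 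Hence $\pi_\psi$ is the irreducible representation attached to $\phi_\psi$ by local Langlands.

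For the triviality of the pairing I would compute the relevant group directly. Since $GL_N$ is split, $W_F$ acts trivially on $\hat G=GL_N(\BC)$ and $Z(\hat G)^{W_F}=\BC^\times$; grouping the summands of $\psi$ into isotypic blocks and applying Schur's lemma identifies $S_\psi=\mathrm{Cent}(\mathrm{Im}\,\psi,\hat G)$ with a product $\prod_\rho GL_{e_\rho}(\BC)$ over the distinct irreducible summands $\rho$, with $e_\rho$ their multiplicities, a connected group. Hence $\bar S_\psi=S_\psi/\BC^\times$ is connected and $\mS_\psi=\pi_0(\bar S_\psi)=\{1\}$; equivalently the group $\Spsi$ occurring in the local pairing is connected, in fact $\simeq\BC^\times$ (cf.\ Remark~\ref{we will not use S-becare}). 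Since any continuous homomorphism from a connected group to the discrete group $\{\pm1\}$ is trivial, the character $\ip{\;}{\pi_\psi}$ is trivial. The only step that is not pure bookkeeping is the one in the previous paragraph --- the irreducibility of $\pi_\psi$ and the identification of its Langlands parameter with $\phi_\psi$ --- which is where the special features of $GL_N$ (absence of nontrivial endoscopy, the classification of its unitary dual, compatibility of the Langlands classification with parabolic induction) genuinely intervene; everything else reduces to the decomposition of $\psi$ and the elementary computation of $S_\psi$.
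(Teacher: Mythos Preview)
The paper does not supply its own proof of this statement: it is quoted directly from Mok with the citation in the theorem header, and the surrounding discussion in \S\ref{Section on Local packets for GLN} simply records the consequences. Your sketch is a correct unpacking of what lies behind the citation, and the two-step strategy---construct $\pi_\psi$ explicitly as an induced-from-Speh representation and then compute $S_\psi$ to be connected---is exactly the standard route.

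One small correction: in your irreducibility argument you write ``here all the inducing data are tempered, so $\pi_\psi$ is irreducible.'' The Speh blocks $\mathrm{Speh}(\delta_i,m_i)$ are unitary but not tempered when $m_i>1$, so temperedness of the inducing data is not the reason. The relevant input is Bernstein's theorem that unitary parabolic induction for $GL_N$ over a local field is always irreducible (equivalently, the classification of the unitary dual of $GL_N$ by Tadi\'c and Vogan); this is what guarantees that the induction of a tensor product of Speh representations stays irreducible. With that adjustment your argument goes through. The notational slip at the end---writing that $\Spsi\simeq\BC^\times$---conflates $\mS_\psi$ (which is trivial) with $S^\natural_\psi$ (which is $\BC^\times$, as in Remark~\ref{we will not use S-becare}); either formulation yields a trivial character, so the conclusion is unaffected.
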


We now consider character of identities between representations of $G$ and those of its endoscopic groups. They are alluded to in \cite{Mok} and \cite{KMSW}, but we give a more explicit description based on \cite[\S 3.3]{Sh11}. For~$G=GL_N$, stable and regular conjugacy classes coincide, so~$\mS(G) = \mathscr{H}(G)$. Since the global extension giving rise to our unitary group is~CM, we  may assume that~$F$ is non-archimedean. If~$H = GL_{N_1}\times GL_{N_2}$ with $N_1 + N_2 = N$, then the embedding $\embend$ realizes~$H$ as a Levi subgroup of~$G$. Let~$P = HN$ be a parabolic subgroup of $G$ containing~$H$. Given~$f \in \mathscr{H}(G)$, define the constant term along $P$ as \[  f^{P}(h) := \del^{1/2}_{P}(h) \int_{N} \int_{K}f(khnk^{-1})dkdn, \quad h \in H(F). \] Here the integrals are taken with respect to suitably normalized Haar measures and~$\delta_{P}$ is the modulus character. The function~$f_v^P$ is smooth and compactly supported, and by results of van Dijk~\cite{va72}, it satisfies the requisite orbital integrals identities to be a transfer of~$f$, so we let~$f^H := f^P$. If~$f$ is unramified, then~$f^H$ is the image of~$f$ under the map~$\mathscr{H}(G)^{ur} \to \mathscr{H}(H)^{ur}$ induced by the Satake isomorphism. Thus this notion of transfer satisfies the fundamental lemma. 

For a parameter~$\psi$ of $G$, we let~$f^{G}(\psi) = \tr \pi_{\psi}(f)$ \cite[\S 1.5]{KMSW} for the unique~$\pi_{\psi} \in \Pi_{\psi}$ and extend this definition multiplicatively to products of general linear groups. 
Let~$\pi_{\psi}^H$ be the unique representation in the packet associated to~$\psi^H$. It follows from the local Langlands correspondence (see for example~\cite[p.6]{HT01} and note that the twist therein is accounted for here in the definition of the embedding ~$\embend$) that ~$\pi_{\psi} = \mI_P(\pi^H_{\psi})$, where~$\mI_P$ denotes normalized parabolic induction with respect to~$P$. In view of this and of Theorem~\ref{lemam on packets for $GL_N$}, the local character identities for~$GL_N$ amount to an equality of traces between~$\tr\pi(f^H)$ and the trace of $f$ on the corresponding induced representation. Again this is a result of van Dijk, which we record below.
\begin{theorem}[Section 5, \cite{va72}]\label{lci for GLN}
	Let $G, H, P$, and $f^H$ be as above. Let~$\pi$ be a unitary irreducible representation of~$H$ and let~$\mI_P(\pi)$ be its normalized parabolic induction with respect to~$P$. Then~$\tr \pi (f^H) = \tr \mI_P(\pi)(f)$. 
\end{theorem}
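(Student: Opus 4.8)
The statement is a theorem of van Dijk \cite[\S 5]{va72}, and the plan is to reprove it by an explicit trace computation in the compact model of the parabolically induced representation. Since $F$ is non-archimedean, I would realize $\mI_P(\pi)$ on the space $\mathcal{V}$ of locally constant functions $\varphi \colon K \to V_\pi$ with $\varphi(uk) = \pi(u)\varphi(k)$ for all $u \in K \cap H$, using that $K \cap N$ is trivial for our choice of $K$; here $G$ acts by $(\mI_P(\pi)(g)\varphi)(k) = \tilde{\varphi}(kg)$, where $\tilde{\varphi}$ is the extension of $\varphi$ to $G$ determined by $\tilde{\varphi}(hnk) = \delta_P(h)^{1/2}\pi(h)\varphi(k)$ for $h \in H$, $n \in N$, $k \in K$. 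Invoking the Iwasawa decomposition $G = HNK$ and the attendant integration formula, a single change of variables $g \mapsto k^{-1}g$ would exhibit $\mI_P(\pi)(f)$, for $f \in \mathscr{H}(G)$, as an integral operator on $\mathcal{V}$ with operator-valued kernel $\mathbf{K}(k,k') = \int_H \int_N f(k^{-1}hnk')\,\delta_P(h)^{-1/2}\,\pi(h)\,dn\,dh = \pi(\beta_{k,k'})$, where $\beta_{k,k'}(h) = \delta_P(h)^{-1/2}\int_N f(k^{-1}hnk')\,dn$ lies in $\mathscr{H}(H)$.

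Next I would compute the trace of this operator. Since $\pi$ is admissible (its smooth vectors form an admissible representation, which is all the argument uses) and $f$ is compactly supported and bi-invariant under some compact open subgroup of $K$, the operator $\mI_P(\pi)(f)$ factors through a finite-dimensional space and is therefore trace class, with trace equal to the integral over the diagonal of the fibrewise trace of its kernel:
\[ \tr \mI_P(\pi)(f) \;=\; \int_{(K\cap H)\backslash K} \tr_{V_\pi}\mathbf{K}(k,k)\,dk \;=\; \int_{(K\cap H)\backslash K} \Theta_\pi(\beta_{k,k})\,dk \;=\; \Theta_\pi\!\left(\int_{(K\cap H)\backslash K}\beta_{k,k}\,dk\right), \]
where $\Theta_\pi$ is the Harish-Chandra character of $\pi$, viewed as an invariant distribution on $H$, and the last equality is just continuity of $\Theta_\pi$ applied to an $\mathscr{H}(H)$-valued integral over the compact set $(K\cap H)\backslash K$ (one must first check that $h \mapsto \int_{(K\cap H)\backslash K}\int_N f(k^{-1}hnk)\,dn\,dk$ is compactly supported on $H$, which follows from $H$ normalizing $N$ and $HN$ being closed in $G$, so that the image in $H$ of $HN \cap K\,\operatorname{supp}(f)\,K$ is compact). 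Finally, I would identify $\int_{(K\cap H)\backslash K}\beta_{k,k}\,dk$ with the constant term $f^P = f^H$ of the statement: this is a routine manipulation using that $\delta_P$ and $\Theta_\pi$ are invariant under conjugation by the compact group $K \cap H$, which preserves $N$ with unit Jacobian, so that $\int_{(K\cap H)\backslash K}$ can be rewritten as an integral over $K$ (or, after normalizing $\operatorname{vol}(K\cap H) = 1$, over $K^H$ as in the displayed definition of $f^P$), absorbing the $\delta_P$-powers appropriately. That yields $\tr \mI_P(\pi)(f) = \Theta_\pi(f^H) = \tr \pi(f^H)$.

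I expect the genuine content, and the main place where care is needed, to be the analytic step "$\tr \mI_P(\pi)(f) = \int_{(K\cap H)\backslash K} \tr_{V_\pi}\mathbf{K}(k,k)\,dk$" together with the surrounding normalization bookkeeping: one passes to a compact open $K_0$ under which $f$ is bi-invariant, observes that $\mI_P(\pi)(f)$ is supported on the finite-dimensional subspace $\mathcal{V}^{K_0}$, and identifies its trace there with the diagonal-kernel integral via an orthonormal basis adapted to $K_0$-isotypic decompositions — all while keeping track of the $\delta_P^{1/2}$ factors coming from normalized induction, the Iwasawa integration formula, and the Jacobian of conjugation on $N$, which is where the precise form of the constant term (including the $K^H$-versus-$(K\cap H)\backslash K$ normalization) is pinned down. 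The interchange of the infinite-dimensional fibrewise trace over $V_\pi$ with the $H$- and $N$-integrations, by contrast, is harmless: for each fixed $k$ it is nothing more than the evaluation of the distribution $\Theta_\pi$ on the test function $\beta_{k,k} \in \mathscr{H}(H)$.
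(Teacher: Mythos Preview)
The paper does not prove this statement; it is quoted as a theorem of van Dijk with a bare citation to \cite[\S 5]{va72} and used as a black box. Your argument is the standard proof of this descent formula (and is in essence what van Dijk does): realize $\mI_P(\pi)$ in the compact picture, express $\mI_P(\pi)(f)$ as an integral operator on $K$, compute the trace as the integral of the fibrewise trace along the diagonal, and identify the result with $\Theta_\pi(f^P)$. So you are reproducing the cited result rather than proposing an alternative route, and your outline is sound.

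There is one slip worth flagging. You assert that ``$K \cap N$ is trivial for our choice of $K$'' and use this to describe the compact model as functions on $K$ that are merely left $(K\cap H)$-equivariant. But for $G = GL_N(F)$ with $K = GL_N(\mO_F)$ and $N$ the unipotent radical of a standard parabolic, $K\cap N = N(\mO_F)$ is certainly not trivial. The correct condition is left $(K\cap P)$-equivariance, i.e.\ $\varphi(hnk) = \pi(h)\varphi(k)$ for $h \in K\cap H$ and $n \in K\cap N$ (using that $\delta_P$ is trivial on the compact group $K\cap H$ and that $\pi$ is extended trivially across $N$). This does not derail your argument: once the extra left $(K\cap N)$-invariance is imposed, the kernel computation and the trace-as-diagonal step go through, with the diagonal integral now taken over $(K\cap P)\dom K$ rather than $(K\cap H)\dom K$. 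The ensuing normalization bookkeeping --- matching this integral with the $\int_{K^H}$ appearing in the paper's definition of $f^P$ --- then involves the volume of $K\cap N$, which you should track alongside the $\delta_P$-powers you already mention.
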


\subsection{The Trace Formula and its Stabilization} \label{section on the trace formula} 
We now introduce Arthur's trace formula following~\cite[\S 3]{KMSW} (see~\cite[\S 3]{A} for a more detailed exposition), focusing on the statements needed for our applications. The rough picture is as follows: for~$F$ a number field, and a connected reductive group~$G/F$, the trace formula~$\Idisc$ (sometimes denoted $I^G_{\text{disc}}$) is a distribution on the Hecke algebra~$\mathscr{H}(G)$, defined in terms of the traces of intertwining operators on variants of~$\Aut(G(F) \dom G(\BAF))$ indexed by a system of Levi subgroups of~$G$. The contribution of the group $G$ itself is the trace $\tr \Rdisc(f) := \tr R(f)$ introduced in Section \ref{SectionOnTestFunctions}. The trace formula admits two decompositions: a spectral one into a sum over the contributions of~$\psi \in \PsiG$, and an endoscopic one (or stabilization) into a sum of stable distributions on endoscopic groups. Our proof will follow from the interplay of these two decompositions.
\subsubsection{Contribution of a Parameter} We start by directly introducing the distributions given by the contribution of each Arthur parameter as in~\cite[\S 3.3]{KMSW}. For following paragraphs, let $(G, \bcend)$ be a pair consisting an inner form of a (possible product of) unitary groups, and an embedding~$\bcend: {^L}G \to {^L}\GN$ as in Section~\ref{section on morphisms of L-groups}. When~$G$ is an inner form of $U(N)$, we have $\bcend = \emb$. 

Recall $\mC(G)$, the set of families of conjugacy classes introduced in~\ref{parameters determine conjugacy classes}. To any automorphic representation $\pi$ of $G$, we can associate an element $c_\pi \in \mC(G)$ by letting $c_{\pi,v}$ be the Satake parameter of $\pi_v$ at all the unramified places $v$. Likewise, we associate to $\pi$ an infinitesimal character $\mu_\pi$. Then for $c \in \mC(G)$ and a positive real number $t$, the distribution $\Idisctc$ is described in~\cite[\S 3.1]{KMSW}. It is the restriction of the traces defining $\Idisc$ to representations $\pi$ such that $c = c_\pi$, and such that~$\mu_\pi$ satisfies~$|\text{Im}\mu_\pi| = t$ under a suitable metric. To go from conjugacy classes to parameters, recall that in~\ref{parameters determine conjugacy classes} we identified ~$\Psi(N)$ with~$\mC_{aut}(N) \subset \mC(N)$. To each~$\psi^N \in \Psi(N)$ is thus associated an element~$c(\psi^N) \in \mC_{aut}(N)$ as well as a positive real number $t(\psi^N)$ coming from the infinitesimal character of $\psi^N$. For each 
parameter~$\psi^N \in \Psi(N)$, we follow \cite[\S 3.3]{KMSW} and define
\[ I_{\text{disc}, \psi^N, \bcend}=  \sum_{c \mapsto c(\psi^N), t \mapsto t(\psi^N)} \Idisctc. \] The sum runs over the~$c \in \mC(G)$ that map to~$c(\psi^N)$ under the map~$\mC(G) \to \mC(N)$ induced by~$\bcend$. When $G^* = U(N)$, we follow \cite[\S3.3]{KMSW} and shorten~$I_{\text{disc}, \psi^N, \eta_{\kappa}}$ to~$\Idisci$ when $\psi = (\psi^N, \tilde{\psi}) \in \Psi(G^*, \eta_{\kappa})$, using the injection $\eta_{\kappa,*}$ of Section~\ref{SectionOnGlobalArthurParameter}. We similarly obtain distributions~$ \tr R_{\text{disc},c,t}$, $R_{\text{disc},\psi^N, \eta_{\kappa, \underline{\kappa}}}$ and~$ \tr R_{\text{disc},\psi^N,\eta_{\kappa}} :=\tr \Rdisci$. If we have $G^* \in \tilde{\mE}_{sim}(N)$, as well as  $(H,\embend) \in \mE(G)$ and $\psi \in \Psiqs$, we will also shorten notation and denote $I^H_{\text{disc},\psi} = I^H_{\text{disc},\psi^N, \embend \circ \eta_{\kappa}}$.

An essential step in the proof of the endoscopic classification of representations is showing that~$\tr \Rdisci$ computes the traces of the representations in~$\Pi_{\psi}$, provided that~$\psi$ satisfies the two conditions of Remark~\ref{our restriction on the parameter}. 
\begin{theorem}[From \cite{Mok}, (5.7.27), and \cite{KMSW}, proof of Theorem 5.0.5] \label{Rdisc computes the traces on the packet.}
Let~$\psi \in \PsiG$ be a square-integrable parameter associated to~$\Pi_{\psi}$, and let~$f \in \mathscr{H}(G)$. Then \[ \tr \Rdisci(f) = \sum_{\pi \in \Pi_{\psi}} m(\pi) \tr \pi(f).\] 
	
\end{theorem}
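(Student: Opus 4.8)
The strategy I would follow is the one carried out over the concluding chapters of \cite{A}, \cite{Mok}, and \cite{KMSW}: compare the two expansions of the distribution $I^G_{\mathrm{disc},\psi}$. The right-hand side of the asserted identity is the trace of $f$ acting on the $\psi$-isotypic summand of $\Aut(G(F)\dom G(\BAF))$; since a unitary representation is determined up to isomorphism by its character, it suffices to prove the equality $\Rdisci(f) = \sum_{\pi\in\Pi_\psi} m(\pi)\tr\pi(f)$ of distributions on $\mathscr{H}(G)$, and I would obtain this by computing $\Rdisci(f)$ in two ways.

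\emph{Spectral side.} Arthur's expansion of $I^G_{\mathrm{disc},t}$ over the Levi subgroups of $G$, decomposed according to $\psi$, takes the form $I^G_{\mathrm{disc},\psi}(f) = \Rdisci(f) + (\text{terms indexed by proper Levis } M\subsetneq G)$, the $M=G$ contribution being $\Rdisci(f)$ itself. I would show the proper-Levi terms vanish. A discrete automorphic parameter for a proper Levi of $G$ --- of the shape $U(N_0)\times\prod_j\mathrm{Res}_{E/F}GL_{M_j}$ up to inner twisting, with the $GL$-part nontrivial --- induces a parameter for $GL_N/E$ that is a sum $\boxplus$ in which each $GL$-block contributes a \emph{pair} $\tau_j\boxtimes\nu(d_j)\ \boxplus\ \bar\tau_j^{\vee}\boxtimes\nu(d_j)$. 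Matching such a sum against $\psi^N=\boxplus_i\mu_i\boxtimes\nu(m_i)$ forces some summand $\mu_a\boxtimes\nu(m_a)$ to satisfy $\mu_a\simeq\bar\mu_a^{\vee}\simeq\mu_b$ with $a\neq b$, or forces $\psi^N$ to contain a repeated summand; both are excluded by the square-integrability hypotheses (the $\mu_i$ pairwise distinct and each conjugate self-dual, Remark \ref{our restriction on the parameter}). Hence $M=G$ is the only surviving term, and combined with the injectivity of $\psi^N\mapsto c(\psi^N)$ (Jacquet--Shalika, \S\ref{parameters determine conjugacy classes}), which makes $\psi$ the unique parameter with Satake family $c(\psi^N)$, this gives $I^G_{\mathrm{disc},\psi}=\Rdisci$.

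\emph{Endoscopic side.} Apply the stabilization $I^G_{\mathrm{disc},\psi}(f)=\sum_{H\in\mathcal{E}(G)}\iota(G,H)\,S^H_{\mathrm{disc},\psi}(f^H)$, in which every $H$ is a product of quasisplit unitary groups. Square-integrability makes $\bar S_{\psi^H}$ finite, so its identity component is trivial, Arthur's constant $\sigma(\bar S^{0}_{\psi^H})$ equals $1$, and the stable multiplicity formula collapses: $S^H_{\mathrm{disc},\psi}(f^H)$ is a fixed nonzero multiple of Mok's stable linear form $f^H(\psi^H)$, the multiple involving $|\mathcal{S}_{\psi^H}|^{-1}$ and the sign $\epsilon_\psi(s_H)$. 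Next expand $f^H(\psi^H)=\prod_v f^{H_v}_v(\psi^H_v)$ and apply the local character identities place by place --- Theorem \ref{lci for qs} at quasisplit places, Theorem \ref{local character identities for general unitary groups} at places where $G$ is not quasisplit --- so that the local Kottwitz signs multiply to $\prod_v e(G_v)=1$ and, using that the global pairing factors as the product of the local ones, $f^H(\psi^H)=\sum_{\pi\in\Pi_\psi}\langle s_\psi s_H,\pi\rangle\,\tr\pi(f)$. Re-indexing the sum over $H$ by the corresponding $s_H\in\mathcal{S}_\psi$ via Lemma \ref{lemma on endoscopic bijection}, and invoking the sign calculus of \cite[\S 4]{A} to assemble the combinatorial constants, the coefficient of $\tr\pi(f)$ becomes the expression of Arthur's multiplicity formula:
\[ I^G_{\mathrm{disc},\psi}(f)=\sum_{\pi\in\Pi_\psi} m(\pi)\,\tr\pi(f),\qquad m(\pi)=\frac{1}{|\mathcal{S}_\psi|}\sum_{s\in\mathcal{S}_\psi}\epsilon_\psi(s)\,\langle s_\psi s,\pi\rangle. \]
By orthogonality of characters of the $2$-group $\mathcal{S}_\psi$, this $m(\pi)$ equals $1$ when the character $\langle\,\cdot\,,\pi\rangle$ attached to $\pi$ coincides with $\epsilon_\psi$ and $0$ otherwise. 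Comparing the two computations and invoking linear independence of irreducible admissible characters forces the $\psi$-part of the discrete spectrum to be $\bigoplus_{\pi\in\Pi_\psi}m(\pi)\pi$ with no extraneous constituents, which is the assertion.

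\emph{Main obstacle.} The two inputs invoked above --- the stabilization of $I^G_{\mathrm{disc},\psi}$ and the stable multiplicity formula for $S^{G^*}_{\mathrm{disc},\psi}$ --- are the genuinely hard part; they are established only at the very end of \cite{Mok} and \cite{KMSW} by a simultaneous induction on $N$ that feeds the global comparison of stabilized trace formulas back into the local classification (the local packets, the endoscopic character identities recalled above, and for inner forms the transfer-factor normalizations of \cite{KMSW}). This is precisely the stage not carried out for non-generic $\psi$ in \cite{KMSW}, whence the conditionality recorded in Remark \ref{remark on conditionality}. By contrast, the two reductions sketched above are formal once those inputs are granted.
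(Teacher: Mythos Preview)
The paper does not prove this theorem; it is cited verbatim from \cite{Mok}, (5.7.27), and \cite{KMSW}, Theorem~5.0.5, and used as a black box. The only commentary in the paper is the sentence after the statement, recording that the multiplicity $m(\pi)$ is given by $\epsilon_\psi$ and the pairing, and flagging the conditionality for non-generic $\psi$. So there is no in-paper argument to compare against.

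That said, your outline is an accurate high-level summary of how the proof runs in those references: expand $I^G_{\mathrm{disc},\psi}$ both spectrally and endoscopically; on the spectral side, kill the proper-Levi terms by observing that any $\mathrm{Res}_{E/F}GL_{M}$-factor of a proper Levi forces a paired block $\tau\boxplus\bar\tau^{\vee}$ in the $GL_N$-parameter, which is incompatible with the ellipticity constraints of Remark~\ref{our restriction on the parameter}; on the endoscopic side, insert the stable multiplicity formula and the local character identities (Theorems~\ref{lci for qs}, \ref{local character identities for general unitary groups}, \ref{lci for GLN}), use Lemma~\ref{lemma on endoscopic bijection} to re-index over $\mathcal{S}_\psi$, and read off Arthur's multiplicity formula via orthogonality on the $2$-group $\mathcal{S}_\psi$. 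You also correctly isolate the genuine content---the stable multiplicity formula and the non-generic local character relations---as the parts established only at the end of the simultaneous induction in \cite{Mok} and \cite{KMSW}, and note that the latter is precisely what is not yet complete in \cite{KMSW}, matching Remark~\ref{remark on conditionality}. In short, you have supplied more than the paper does, and what you supplied is faithful to the cited sources.
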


In the notation of Section 2.5, the multiplicity $m(\pi)$ is equal to $1$ if~$\ip{\pi}{\;}_\psi = \epsilon_\psi$ as characters of $\mS_\psi$, and $0$ otherwise, see \cite[\S 1.7]{KMSW}. Note again that~\cite[Theorem 5.0.5]{KMSW} is stated, but not fully proved, in the case of non-generic parameters, as mentioned in the introduction and in Remark~\ref{remark on conditionality}. 

Following a result of Bergeron-Clozel, the distributions~$\tr \Rdisci$ and~$\Idisci$ agree if the infinitesimal character is regular.

\begin{theorem}[Theorem 6.2, \cite{BC}] \label{Bergeron-Clozel on parameters with reg inf char}
Let~$G$ be a connected reductive group. Let~$\psi \in \Psi(G)$ be a global Arthur parameter such that~$\psi_\infty$ has regular infinitesimal character. Then the contributions of the Levi subgroups~$M \neq G$ to the distribution~$\Idisci$ vanish. In particular for all~$f \in \mathscr{H}(G)$ we have~$\Idisci(f) =\tr \Rdisci(f)$.
\end{theorem}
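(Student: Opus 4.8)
The plan is to derive this from Arthur's refined spectral expansion of the discrete part of the trace formula, and to use the regularity hypothesis to annihilate every term but the one indexed by $M=G$. Recall that, for a fixed $t\ge 0$, that expansion writes $\Idisct(f)$ as a sum over Levi subgroups $M\subseteq G$ containing a fixed minimal one, of terms
\[
\frac{|W_0^M|}{|W_0^G|}\sum_{w\in W(M)_{\mathrm{reg}}}\frac{1}{|\det(w-1)_{\mathfrak{a}_M^G}|}\;\tr\bigl(M_P(w,0)\,\mathcal{I}_{P,t}(f)\bigr),
\]
where $P$ is a fixed parabolic with Levi $M$, $\mathcal{I}_{P,t}$ is the representation of $G(\BAF)$ induced from the $t$-part $L^2_{\mathrm{disc},t}(M)$ of the discrete spectrum of $M(\BAF)$, $M_P(w,0)$ is the normalised global intertwining operator at the point $\lambda=0$, and $W(M)_{\mathrm{reg}}=\{\,w\in W(A_M):\det(w-1)_{\mathfrak{a}_M^G}\ne 0\,\}$. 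For $M=G$ one has $W(M)_{\mathrm{reg}}=\{1\}$ and $M_P(1,0)=\mathrm{id}$, so this term is $\Rdisc(f)$ restricted to the $t$-part; after the further restriction of $c$ to $c(\psi^N)$ in the definition of $\Idisci$ it becomes exactly $\Rdisci(f)$. That same restriction cuts each induced space $\mathcal{I}_{P,t}$ down to the summands attached to those $\sigma\in L^2_{\mathrm{disc},t}(M)$ whose induced Satake family maps to $c(\psi^N)$; by the injectivity of $\psi^N\mapsto c(\psi^N)$ (Jacquet--Shalika) together with compatibility with localisation, every such $\sigma$ has $\mathcal{I}_P(\sigma_\infty)$ of infinitesimal character that of $\psi_\infty$ --- hence regular. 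So it suffices to prove that, after this restriction, every term with $M\subsetneq G$ vanishes.

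Fix such an $M$. Then $\mathfrak{a}_M^G\ne 0$, so $1\notin W(M)_{\mathrm{reg}}$ and every $w$ in play is nontrivial. First I would decompose the restricted induced space into $\sigma$-isotypic blocks $\bigoplus_\sigma\mathcal{I}_P(\sigma)^{\oplus m(\sigma)}$: the operator $\mathcal{I}_{P,t}(f)$ preserves each block, while the defining property of $M_P(w,0)$ is that it carries the $\sigma$-block into the $(w\sigma)$-block (the normalising scalars do not affect this). Hence the composite $M_P(w,0)\,\mathcal{I}_{P,t}(f)$ has vanishing diagonal on every block with $w\sigma\not\cong\sigma$, and its trace receives a contribution only from $\sigma$ with $w\sigma\cong\sigma$, so in particular $w\sigma_\infty\cong\sigma_\infty$. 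Then comes the elementary point: lift $w$ to $\tilde w\in W^G=W(G,T)$ normalising $\mathfrak{a}_M$ and choose a representative $\lambda\in\mathfrak{t}^*_{\BC}$ of the infinitesimal character of $\sigma_\infty$, which is $W^G$-regular by the previous paragraph. From $w\sigma_\infty\cong\sigma_\infty$ one gets $\tilde w\lambda=u\lambda$ for some $u\in W^M$, hence $(u^{-1}\tilde w)\lambda=\lambda$, hence $u^{-1}\tilde w=1$ in $W^G$ by regularity, hence $\tilde w=u\in W^M$; so the image $w$ of $\tilde w$ in $W(A_M)\simeq\{\,v\in W^G:v\mathfrak{a}_M=\mathfrak{a}_M\,\}/W^M$ is trivial, contradicting $w\ne 1$. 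Thus no such $\sigma$ exists, every term with $M\subsetneq G$ dies on the $\psi$-part, and $\Idisci(f)=\Rdisci(f)$.

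The Weyl-group manipulation above is the easy part; the substance is the two structural facts it rests on, both of which I would quote from Arthur's framework rather than reprove. The first is that the normalised intertwining operators are holomorphic at $\lambda=0$ on the unitary axis, so that the spectral expansion is meaningful and its discrete part is trace-class against $f\in\mathscr H(G)$. The second --- the real point --- is the block-permutation property: that $M_P(w,0)$ maps the $\sigma$-isotypic part of the space induced from $L^2_{\mathrm{disc}}(M)$ into the $(w\sigma)$-isotypic part, which is exactly what converts ``$M_P(w,0)$ contributes to the trace'' into ``$w$ fixes $\sigma$''. One must also check with some care that the $(c,t)$-restriction defining $\Idisci$ genuinely forces the archimedean infinitesimal character of every inducing datum $\sigma$ to equal that of $\psi_\infty$; this is where strong multiplicity one for $GL_N$ enters, since $c(\psi^N)$ determines $\psi^N$ and hence $\psi^N_\infty$, together with the compatibility of the embeddings $\eta_{\underline\kappa}$ with passage to archimedean places, in the spirit of Lemma~\ref{lemmainfcharremainsregular}. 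Granting these inputs, the argument closes as above.
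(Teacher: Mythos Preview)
The paper does not supply its own proof of this statement: it is quoted verbatim as Theorem~6.2 of Bergeron--Clozel \cite{BC} and used as a black box. So there is nothing in the paper to compare your argument against line by line.

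That said, your sketch is the standard argument and is essentially what one finds in \cite{BC} (and, before that, in Arthur): write the refined spectral expansion of $\Idisct$, observe that the intertwining operator $M_P(w,0)$ sends the $\sigma$-block to the $w\sigma$-block so that only $w$-fixed $\sigma$ contribute to the trace, and then use regularity of the infinitesimal character to force any such $w$ to lie in $W^M$, hence to be trivial in $W(\fa_M)$. Two small points are worth tightening. First, the passage between the relative Weyl group $W(\fa_M)$ appearing in Arthur's expansion and the absolute Weyl group $W(\fg_\infty,\fh)$ acting on infinitesimal characters deserves a sentence: a representative of $w$ in $G_\infty$ normalises $M_\infty$, hence carries a Cartan $\fh\subset\fm_\infty$ to another, and after correcting by an element of $M_\infty$ one obtains a well-defined class in $W(\fg_\infty,\fh)/W(\fm_\infty,\fh)$; it is this class that acts on the $W^M$-orbit of $\lambda$, and your equation $\tilde w\lambda=u\lambda$ is then exactly the statement that this class is trivial. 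Second, you are right to single out the verification that the $(c,t)$-restriction pins down the archimedean infinitesimal character of every contributing $\sigma$; this is where the appeal to Jacquet--Shalika and the compatibility of $\eta_{\underline\kappa}$ with localisation enters, and it is the one place in the argument that is specific to the present setting of parameters for $\GN$ rather than a general reductive group. With those caveats, your outline is correct.
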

If $H = U(N_1) \times U(N_2)$, and $\psi^H = \psi_1 \times \psi_2$ we can write \[ R_{\text{disc}, \psi^H}(f) = R_{\text{disc},\psi_1}(f_1)\cdot R_{\text{disc},\psi_2}(f_2), \quad f = f_1 \times f_2 \in \mathscr{H}(H). \] Following the above result, we can also write this as $I_{\text{disc}, \psi^H}(f)$ provided that $\psi^H$ has regular infinitesimal character. 

\subsubsection{Stabilization} \label{Stabilization}
We now recall the identity that drives our theorems: the stabilization of~$\Idisci$, i.e. its decomposition into sum of stable traces of the transfers~$f^H$ of~$f$ for the endoscopic groups~$H \in \mE(G)$.  Our references are to Arthur~\cite{A}, but the versions for unitary groups are formally identical, see for example~\cite[(3.3.2)]{KMSW} and~\cite[(4.2.1)]{Mok}. Recall that~$\tilde{\Psi}(N)$ is the set of conjugate self-dual parameters, and~$\PsiG \subset \tilde{\Psi}(N)$.

\begin{theorem}[\cite{A}, Corollary 3.3.2(b)] Suppose that~$\psi \in \tilde{\Psi}(N)$ and let~$f \in \mathscr{H}(G)$. Then for each endoscopic datum~$(H, \xi_H) \in \mE(G)$ there is a constant~$\iota(G,H)$ and stable distributions~$\SHdisci$ on~$\mathscr{H}(H)$, defined inductively, such that \begin{equation} \label{stable trace formula} \Idisci (f) = \sum_{H \in \mathcal{E}(G)}
		\iota(G,H) \SHdisci(f^H). \end{equation}
\end{theorem}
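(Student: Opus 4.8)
This statement is the specialization to unitary groups of Arthur's stabilization of the invariant trace formula, so the plan is to transcribe Arthur's argument (\cite{A}, and the formally identical versions in \cite{Mok,KMSW}) into the $L$-group formalism of Section~\ref{section on L-groups of unitary groups}, the only genuinely new inputs being the fundamental lemma for unitary groups of Laumon--Ng\^o \cite{LN} and the compatibility of the embeddings $\embend$ with endoscopic transfer. \emph{Step 1: the invariant trace formula.} Start from $I^G(f)=I^G_{\mathrm{geom}}(f)=I^G_{\mathrm{spec}}(f)$, where both sides are finite sums over conjugacy classes of Levi subgroups $M\subseteq G$ of invariant distributions $I^G_M(\cdot,f)$ assembled from weighted orbital integrals (geometric side) and weighted characters (spectral side). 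The $M=G$ term of the spectral side contributes $\Rdisc(f)$ together with the contributions of the continuous spectrum; grouping terms so that only genuinely discrete data survive isolates $\Idisc$.

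\emph{Step 2: stabilize the geometric side and define the $S^H$.} Using the Langlands--Shelstad and Kottwitz--Shelstad transfer maps \cite{LS87,KS99} --- a theorem via Waldspurger's reduction \cite{Wa06,Wa08} to the fundamental lemma of Ng\^o \cite{Ngo} (Laumon--Ng\^o \cite{LN} in our case) --- together with the weighted fundamental lemma of Chaudouard--Laumon, one defines stable distributions $S^H$ on $\mathscr{H}(H)$ inductively by
\[ I^G(f)=\sum_{H\in\mE(G)}\iota(G,H)\,S^H(f^H), \]
where $\iota(G,H)$ is the constant of \cite{A} and each $S^H$ for proper $H$ is already known by induction on $\dim H$; one then \emph{proves} that the $S^G$ so defined is stable. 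This last point is the core of Arthur's work: a simultaneous induction on dimension, using the descent and splitting formulas for weighted orbital integrals to reduce to (i) the unramified places, handled by the (weighted) fundamental lemma, and (ii) the archimedean and ramified places, handled by the local trace formula and vanishing properties of the local stable transfer, together with global vanishing arguments. The mirror argument on the spectral side produces the stable spectral expansion, so the spectral side of each $S^H$ is a sum of stable distributions attached to the parameters $\psi^H\in\Psi(H,\emb\circ\xi_H)$.

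\emph{Step 3: descend to $\Idisci$.} Decompose $I^G$ by the archimedean norm parameter $t\in\BR_{\geq 0}$; Arthur's multiplier-convergence estimates make $I^G=\sum_t I^G_t$ valid, and each piece stabilizes, $I^G_{\mathrm{disc},t}(f)=\sum_{H}\iota(G,H)\,S^H_{\mathrm{disc},t}(f^H)$, once one checks that the discrete parts match compatibly on both sides (the contributions of proper Levi subgroups to the two sides correspond by the inductive hypothesis). Next decompose by the family $c\in\mC(G)$ of Satake parameters outside a finite set of places: the unramified Hecke operators act semisimply and commute with transfer through the Satake isomorphism and the fundamental lemma, so one obtains $\Idisctc(f)=\sum_{H}\iota(G,H)\,S^H_{\mathrm{disc},t,c}(f^H)$, the sum on $H$ running over the families compatible with $c$ under $\xi_H$. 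Finally, for $\psi\in\tilde{\Psi}(N)$ with associated data $c(\psi^N)$ and $t(\psi^N)$, summing over $c\mapsto c(\psi^N)$ at $t=t(\psi^N)$ exactly as in the definition of $\Idisci$ preceding the theorem, and doing likewise on each $H$, yields
\[ \Idisci(f)=\sum_{H\in\mE(G)}\iota(G,H)\,\SHdisci(f^H), \]
where $\SHdisci$ is the sub-sum of $S^H$ indexed by those $\psi^H$ with $\xi_H\circ\psi^H=\psi$; this is the assertion.

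\emph{Main obstacle.} The hard part is entirely in Step 2: proving that the inductively-defined geometric distribution $S^G$ is in fact stable. All the local harmonic analysis (descent, splitting, the local trace formula, vanishing of cuspidal stable distributions away from elliptic elements) and all the deep geometric input (the fundamental lemma and its weighted generalization) are needed precisely to close that induction. The remaining steps --- isolating $t$, isolating $c$, and summing over $c\mapsto c(\psi^N)$ --- are comparatively formal bookkeeping, though they still rely on Arthur's nontrivial convergence estimates for the spectral expansion.
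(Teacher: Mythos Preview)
The paper does not give its own proof of this theorem: it is stated as a citation of Arthur~\cite[Corollary 3.3.2(b)]{A}, with the parenthetical remark that the formally identical versions for unitary groups appear in~\cite[(3.3.2)]{KMSW} and~\cite[(4.2.1)]{Mok}. It is treated as a black-box input to the paper, and the only content the paper adds is the subsequent remark computing the constants $\iota(G,H)$ explicitly for $H=U(N_1)\times U(N_2)$.

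Your outline is a fair high-level summary of how the stabilization is actually established in those references (inductive definition of $S^H$, stability of $S^G$ via the fundamental lemma and its weighted variant, then refinement by $t$ and $c$ to isolate $\Idisci$), so there is nothing to correct in terms of strategy. But be aware that what you have written is a sketch of Arthur's proof, not a comparison point: the paper simply invokes the result. If the intent is to match the paper's treatment, the appropriate response is a one-line citation rather than a proof.
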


\begin{remark}
	For unitary groups, the global factor~$\iota(G,H)$ is introduced in \cite[\S 4.2]{Mok} and \cite[\S 3.1]{KMSW}. It is independent of the inner form~$G$. If~$G = U(N)$ and~$H = U(N_1) \times U(N_2)$, then following~\cite[4.2]{Mok} we have \begin{equation}\label{bounds on side of i(G,H)} \iota(G,H) = \begin{cases}
			1 & N_1N_2 = 0 \\ \frac{1}{2} & N_1, N_2 \neq 0, N_1 \neq N_2 \\ \frac{1}{4} & N_1 = N_2 \neq 0. 
	\end{cases} \end{equation} 
\end{remark}

\section{Upper Bounds from the Stabilization} \label{section on upper bounds}

In this section we unpack the summands of the stabilization of $\Idisci$ and extract upper bounds on the trace of test functions from the character identities. 

\subsection{The Stable Multiplicity Formula}

Let $\psi = (\psi^N, \tilde{\psi}) \in \Psi(G^*, \eta_{\kappa})$. Recall the decomposition from~\eqref{stable trace formula}: \begin{equation} \label{stable again}\Idisci (f) = \sum_{H \in \mathcal{E}(G)}
	\iota(G,H) \SHdisci(f^H).\end{equation} The stable multiplicity formula expresses each~$\SHdisci$ as a sum of traces. If~$f_v$ is a local test function and $\psi_v$ a local parameter, the formula for~$f^{H_v}(\psi_v)$ was given in Section~\ref{charid}. If~$f = \prod_v f_v$ and $\psi$ are global, we write~$f^H(\psi) := \prod_v f^{H_v}(\psi_v)$. The group~$\mS_{\psi}$ and the element~$s_\psi$ were defined in~\ref{section where we compute the stabilizers}, and~$\epsilon_\psi$ in~\ref{sectionon epsilon factors}. The stable multiplicity formula, only defined for quasisplit groups, is the following expression: 
\begin{theorem}[\cite{Mok}, Theorem 5.1.2]
	For~$\psi \in \Psi(G, \eta_{\kappa})$, we have \[ S^G_{\text{disc}, \psi}(f) = |\mS_\psi|^{-1}\epsilon_\psi^G(s_\psi)\sigma(\bar{S}_\psi^0)f^G(\psi). \]
\end{theorem}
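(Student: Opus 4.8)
The plan is to deduce the formula by induction on $N$, carried out simultaneously with the local and global classification theorems for unitary groups of rank $<N$, and to extract it from the interplay between the two expansions of the distribution $\Idisci$: its spectral expansion and its endoscopic expansion (the stabilization~\eqref{stable trace formula}). Throughout, $G=G^*=U(N)$ is quasisplit (a product of quasisplit unitary groups being handled factor by factor), and one fixes a square-integrable $\psi\in\Psiqs$, so that $S_\psi$, $\bar{S}_\psi$ and $\mS_\psi$ are finite, $\bar{S}_\psi^0=\{1\}$, and $\sigma(\bar{S}_\psi^0)=\sigma(\{1\})=1$; the factor is retained only to match the general shape of Arthur's formula. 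I will freely identify the parameter $\psi$ with $\psi^N$ via $\eta_{\kappa,*}$, as the paper does.

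First I would isolate the term of $G$ itself in the stabilization. Writing $\iota(G,G)=1$, rearrange~\eqref{stable trace formula} as
\[ \SGdisci(f) = \Idisci(f) - \sum_{\substack{H\in\mE(G)\\ H\text{ proper}}} \iota(G,H)\,\SHdisci(f^H). \]
For each proper $H=U(N_1)\times U(N_2)$ with $N_1,N_2<N$, the inductive hypothesis gives the stable multiplicity formula for $H$, so $\SHdisci(f^H)$ is a known multiple of $f^H(\psi^H)=\prod_v f_v^{H_v}(\psi^H_v)$, summed over those $\psi^H\in\Psi(H,\embend\circ\xi_H)$ with $\xi_H\circ\psi^H=\psi$; by Lemma~\ref{lemma on endoscopic bijection} these pairs $(H,\psi^H)$ are exactly indexed by the nontrivial $s=s_H\in\mS_\psi$. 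Feeding in the local character identities of Theorem~\ref{lci for qs} turns each local factor $f_v^{H_v}(\psi^H_v)$ into $\sum_{\pi_v\in\Pi_{\psi_v}}\ip{s_{\psi_v}s_H}{\pi_v}\,\tr\pi_v(f_v)$, so the whole subtracted sum becomes a double sum, over $\pi\in\Pi_\psi$ and over $1\neq s\in\mS_\psi$, of $\ip{s_\psi s}{\pi}\,\tr\pi(f)$ with explicit constants; combined with Theorems~\ref{Rdisc computes the traces on the packet.} and~\ref{Bergeron-Clozel on parameters with reg inf char} (or Arthur's analysis of $\Idisc$ in general) this leaves an identity purely among traces $\tr\pi(f)$, $\pi\in\Pi_\psi$, plus the unknown $\SGdisci(f)$.

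Next I would bring in the $GL_N$ side, which is what breaks the apparent circularity, since $\Idisci$ itself is not yet independently known. The parameter $\psi^N\in\tilde{\Psi}(N)$ is conjugate self-dual, hence $\theta$-stable for the outer automorphism $\theta$ defining $\GN=\Res^E_F GL_N$; Mo{\oe}glin--Waldspurger's description of the discrete spectrum of $GL_N$ identifies the $\psi^N$-component of the twisted distribution $\tilde I_{\mathrm{disc},\psi^N}$ explicitly, and its value already carries the sign $\epsilon_\psi(s_\psi)$ and (in the non-elliptic range) the factor $\sigma(\bar{S}_\psi^0)$ through Arthur's combinatorial ``sign lemma'' relating the symplectic root numbers $\epsilon(1/2,\mu_i\times\mu_j)$ to characters of $\mS_\psi$. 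Stabilizing the twisted trace formula for $\GN\rtimes\theta$ writes $\tilde I_{\mathrm{disc},\psi^N}(\tilde f)$ as a sum over $\tilde{\mE}_{\mathrm{sim}}(N)$ — i.e.\ over $(U(N),\eta_{\pm 1})$ — of (an analogue of) $\iota$ times $S^G_{\mathrm{disc},\psi^N}$, plus contributions of proper twisted endoscopic data already known by induction. Matching this stabilized expression against the explicit Mo{\oe}glin--Waldspurger computation, and using that only one of the two simple twisted data contributes for a given $\psi^N$ (the sign condition on parity), one solves the resulting linear identity for $S^G_{\mathrm{disc},\psi^N}$ and reads off precisely $|\mS_\psi|^{-1}\epsilon_\psi(s_\psi)\sigma(\bar{S}_\psi^0)\,f^G(\psi)$.

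The main obstacle is the simultaneous induction itself: the displayed identity is not an isolated computation but one knot in a web of mutually dependent statements — existence and stability of the $\SHdisci$, the local packets and pairings, the multiplicity formula for $\Rdisci$, and the identity $\Idisci=\Rdisci$ — so one must organize the argument so that at each stage only strictly lower-rank data is invoked. Concretely, the hardest technical points are: (i) showing each term of~\eqref{stable trace formula} is genuinely a \emph{stable} distribution, which is where the local classification for $G$ is needed, so that $S^G_{\mathrm{disc},\psi^N}$ is even well-defined; (ii) controlling the spectral side, namely the vanishing or cancellation of the contributions of proper Levi subgroups to $\Idisci$ (immediate from Theorem~\ref{Bergeron-Clozel on parameters with reg inf char} for regular infinitesimal character, but requiring Arthur's full treatment of $\Idisc$ together with vanishing of the $\psi$-component of $R_{\mathrm{disc}}$ for properly induced $\psi$ in general); and (iii) the sign-and-normalization bookkeeping — transfer factors, Kottwitz signs, and Arthur's function $\sigma$ — which is exactly the ``standard model'' computation and is where the coefficient $\epsilon_\psi(s_\psi)\sigma(\bar{S}_\psi^0)$ has to be produced on the nose.
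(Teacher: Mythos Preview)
The paper does not prove this theorem at all: it is quoted verbatim as Theorem 5.1.2 of \cite{Mok} and used as a black box, so there is no ``paper's own proof'' to compare your proposal against. Your outline is a reasonable high-level sketch of the strategy actually used in \cite{Mok} (and in \cite{A}) --- isolate $\SGdisci$ in the ordinary stabilization, invoke the stable multiplicity formula inductively for proper endoscopic $H$, and break the circularity via the twisted trace formula for $\GN\rtimes\theta$ together with M{\oe}glin--Waldspurger --- but none of that argument appears in the present paper.

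One caution about your sketch as written: you invoke Theorems~\ref{Rdisc computes the traces on the packet.} and~\ref{Bergeron-Clozel on parameters with reg inf char} as inputs, but in the actual inductive scheme of \cite{Mok} the stable multiplicity formula is proved \emph{simultaneously} with (and logically prior to, for the group $G$ itself) the multiplicity formula for $\Rdisci$; you cannot assume $\Idisci=\Rdisci$ or the packet decomposition for $G$ at rank $N$ when establishing the stable multiplicity formula at rank $N$. You acknowledge this in your discussion of obstacles, but the first paragraph of your argument reads as though those results are already available, which would be circular.
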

For any connected reductive group~$S$, the quantity~$\sigma(S)$  was defined by Arthur in~\cite[\S 4.1]{A}. The centralizers $S_\psi$ of our~$\psi$ are always finite, so~$\bar{S}^0_\psi$ is trivial and~$\sigma(\bar{S}^0_\psi) = 1$, see~\cite[Remark 5.1.4]{Mok}. The stable multiplicity formula is stated for~$G$ a unitary group (in which case the map~$\psi \mapsto \psi^N$ is injective), but can be extended to products~$H  \in \mE(G)$. Let~$\Psi(H, \psi^N)$ be the set consisting of parameters~$\psi^H = (\psi^{N,H}, \tilde{\psi}^H)$ with~$\psi^{N,H} = \psi^N$. The stable multiplicity formula for~$H$, given in \cite[(5.6.3)]{Mok}, is: \begin{equation} \label{stable multiplicity formula for endoscopic groups} S^H_{\text{disc}, \psi}(f^H) = \sum_{\psi^H \in \Psi(H, \psi^N)} \frac{1}{|\mS_{\psi^H}|} \epsilon_\psi^H(s_{\psi^H}^H)\sigma(\bar{S}_{\psi^H}^0)f^H(\psi^H). \end{equation} 
We combine equations~\eqref{stable again} and~\eqref{stable multiplicity formula for endoscopic groups} and rewrite the resulting expression as a sum over pairs~$(H, \psi^H)$ to get: \begin{equation} \label{first simplification of the STF}\Idisci (f) = \sum_{(H, \psi^H)} \iota(G,H) \frac{1}{|\mS_{\psi^H}|} \epsilon_\psi^H(s_{\psi^H}^H)\sigma(\bar{S}_{\psi^H}^0)f^H(\psi^H).\end{equation}
We now collect the terms that can be bounded uniformly, and let \begin{equation}C(\psi,H) := \iota(G,H) \sigma(\bar{S}_{\psi^H}^0)|\mS_{\psi^H}|^{-1}. \end{equation}

\begin{lemma} \label{lemma with the constant} Let~$\psi \in \Psiqs$ and let~$(H, \xi_H, s_H) \in \mE(G)$ be an endoscopic datum such that~$\psi$ factors through~$\xi_H$. Then 
	\begin{itemize}
		\item[(i)] The contribution of~$(H, \psi^H)$ to the sum \eqref{first simplification of the STF} is equal to~$ C(\psi, H) \epsilon_\psi^H(s_\psi^H)f^H(\psi^H).$
		\item[(ii)] The constant ~$C(\psi, H)$ is bounded uniformly in $\psi$ and $H$: it always satisfies $2^{-(N+1)}\leq C(\psi,H) \leq 1$, where $N$ is the rank of $G$. 
	\end{itemize}
\end{lemma}
\begin{proof}
	Part (i) follows immediately from~\eqref{first simplification of the STF} and it suffices to exhibit the bound on~$ C(\psi, H)$. As stated above, we have $\sigma(\bar{S}_\psi^0) =1$ since $\psi$ is elliptic. We also gave uniform bounds on $\iota(G,H)$ in \eqref{bounds on side of i(G,H)} and on $|\mS_{\psi}|$ in \eqref{size of S_psi}.
\end{proof}
In Lemma~\ref{lemma on endoscopic bijection}, we gave a bijection between~$\mS_{\psi}$ and the set of pairs~$(H, \psi^H)$. We use it to re-index the sum~\eqref{first simplification of the STF} and obtain the expression \begin{equation}\label{simple form in terms of Spsi}\Idisci(f) = \sum_{s_H \in \mS_{\psi}} C(\psi, s_H) \epsilon_\psi^H(s_{\psi^H}^H)f^H(\psi^H). \end{equation}
This sum depends on parameters and representations of~$H$, which we want to rewrite in terms of~$G$. For~$\epsilon_\psi$, we use Mok's so-called \emph{endoscopic sign lemma}. 
\begin{lemma}[Lemma 5.6.1, \cite{Mok}] \label{endoscopic sign lemma}
	Let~$(H, \xi, s_H) \in \mE(G)$ and~$\psi \in \Psiqs$ be such that~$(H, \psi^H)$ corresponds to~$(\psi, s_H)$. Let~$\epsilon_\psi^{G^*}$ and~$\epsilon_{\psi}^H$ be the respective characters of~$\psi$ and~$\psi^H$. Let~$s_{\psi^H}^H$ be the image of~$\psi^H(-I)$ in the quotient $\mS_{\psi}^H$ associated to $H$. Then we have  \[ \epsilon_\psi^{H}(s^H_{\psi^H}) = \epsilon^{G^*}(s_\psi s_H).  \] 
\end{lemma}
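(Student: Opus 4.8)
The plan is to unwind Arthur's definition of the character $\epsilon_\psi$ from \cite[\S 1.5]{A} on both sides of the asserted identity and to match the two resulting products of signs constituent by constituent. Write $\psi^N = \boxplus_{i=1}^r \mu_i\boxtimes\nu(m_i)$ and let $\{1,\dots,r\} = I_1\sqcup I_2$ be the partition of the simple summands corresponding to the decomposition $\psi^H = \psi^{H_1}\times\psi^{H_2}$ with $H = U(N_1)\times U(N_2)\in\mE(G)$. Recall that $\epsilon_\psi$ is the character of $\mS_{\psi}$ obtained by decomposing the adjoint representation $\mathrm{Ad}\circ\psi$ of $L_F\times SL_2(\BC)$ into irreducible constituents, \emph{retaining} those whose underlying general linear representation is of (conjugate-)symplectic type and has symplectic root number $\epsilon(\tfrac12,\cdot) = -1$, and multiplying together the scalars by which the argument acts on the retained constituents. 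The first point to record is that, since $\psi^{N,H} = \psi^N$, the characters $\epsilon_\psi^{G^*}$ and $\epsilon_\psi^H$ are built from the \emph{same} list of root numbers; the content of the lemma is therefore entirely a matter of bookkeeping — of matching which constituents are retained and which scalars occur.

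First I would make the two evaluations explicit. The constituents of $\mathrm{Ad}\circ\psi$ are indexed by unordered pairs $\{i,j\}$, with a diagonal, Asai-type contribution when $i=j$; using $\nu(m)(-I) = (-1)^{m-1}$ one computes that $s_\psi = \psi(1,-I)$ acts on the $\{i,j\}$-constituent by the scalar $(-1)^{m_i+m_j}$, while $s_H$ — which, up to the center, is the block element of $\hat G = GL_N(\BC)$ whose centralizer is $GL_{N_1}(\BC)\times GL_{N_2}(\BC)$ — acts by $+1$ when $i$ and $j$ lie in a common block and by $-1$ otherwise. On the other side, $\epsilon_\psi^H = \epsilon^{H_1}_{\psi^{H_1}}\cdot\epsilon^{H_2}_{\psi^{H_2}}$ is assembled only from the constituents with $i,j$ in a common block, and $s^H_{\psi^H} = \bigl(\psi^{H_1}(1,-I),\psi^{H_2}(1,-I)\bigr)$ acts on such a constituent again by $(-1)^{m_i+m_j}$. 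Thus on every same-block constituent the scalar attached to $s_\psi s_H$ equals that attached to $s^H_{\psi^H}$, whereas the mixed-block constituents appear only on the $G^*$ side, where $s_\psi s_H$ acts on them by $(-1)^{m_i+m_j+1}$.

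Next I would dispose of the mixed-block constituents. For a pair $\{i,j\}$ with $i\in I_1$, $j\in I_2$, the relevant general linear representation is essentially $\Ind_{W_E}^{W_F}(\mu_i\otimes\bar\mu_j)$, which is conjugate-symplectic precisely when $\mu_i\otimes\bar\mu_j$ has conjugate-duality parity $-1$; since every summand of $\psi$ has conjugate-duality parity $(-1)^{N+1}\kappa$ relative to $\emb$, the parities of $\mu_i$ and $\bar\mu_j$ are $(-1)^{N+m_i}\kappa$ and $(-1)^{N+m_j}\kappa$, so the parity of $\mu_i\otimes\bar\mu_j$ is $(-1)^{m_i+m_j}$. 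Hence such a constituent can only be retained when $m_i + m_j$ is odd, in which case $s_\psi s_H$ acts on it by $(-1)^{m_i+m_j+1} = +1$. The mixed-block constituents therefore contribute $+1$ to $\epsilon_\psi^{G^*}(s_\psi s_H)$, and they do not appear in $\epsilon_\psi^H(s^H_{\psi^H})$ at all.

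It remains — and this is where I expect the real work to lie — to check that the set of retained same-block constituents is the same whether computed inside $G^*$ (relative to $\emb$) or inside the factor $H_\ell = U(N_\ell)$. By Lemma~\ref{lemma on endoscopic bijection} the parameter $\psi^{H_\ell}$ lives in $\Psi(H_\ell,\emb\circ\xi_H)$, and the composite embedding $\emb\circ\xi_H$ has signature $\kappa\kappa_\ell$ on the $\ell$-th block with $\kappa_\ell = (-1)^{N-N_\ell}$ (Section~\ref{section on morphisms of L-groups}); the identity $(-1)^{N_\ell+1}\kappa\kappa_\ell = (-1)^{N+1}\kappa$ then forces the conjugate-duality parities of the summands, and the parity of the relevant Asai representation appearing in the diagonal constituents, to coincide with those seen by $U(N)$. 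Since the off-diagonal same-block constituents are governed by the Rankin--Selberg data $\mu_i\times\bar\mu_j$ over $E$, which is insensitive to $\kappa$, I would conclude that a constituent is retained for $G^*$ if and only if it is retained for $H_\ell$, so the same-block contributions to the two sides agree; combined with the previous paragraph this yields $\epsilon_\psi^H(s^H_{\psi^H}) = \epsilon_\psi^{G^*}(s_\psi s_H)$. The main obstacle is the consistent handling of all the competing sign conventions — the matrix $\Phi_N$, the choice of $w_c$, the characters $\chi_{\kappa}$, and the sign in the Asai twist — through this last comparison; this is essentially the sign computation carried out by Mok in \cite[\S 5.6]{Mok}.
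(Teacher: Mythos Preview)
The paper does not prove this lemma; it is stated with attribution to Mok \cite[Lemma~5.6.1]{Mok} and then invoked without further argument in the proof of Proposition~\ref{trace over SH}. So there is no in-paper proof to compare your proposal against.

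That said, your proposal is a correct outline of the computation Mok carries out. The strategy --- partition the irreducible constituents of the adjoint representation into same-block and mixed-block pairs relative to $H = U(N_1)\times U(N_2)$, show that the mixed-block constituents contribute $+1$ to $\epsilon_\psi^{G^*}(s_\psi s_H)$ because they are retained only when $m_i+m_j$ is odd (whence $s_\psi s_H$ acts by $(-1)^{m_i+m_j+1}=+1$), and match the same-block constituents by checking that the conjugate-duality parity of each summand computed from $U(N_\ell)$ via the composite embedding $\emb\circ\xi_H$ coincides with that computed from $U(N)$ via $\emb$ --- is exactly Mok's argument. Your verification of the parity identity $(-1)^{N_\ell+1}\kappa\kappa_\ell = (-1)^{N+1}\kappa$ from $\kappa_\ell = (-1)^{N-N_\ell}$ is the key compatibility. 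The place where your sketch is thinnest is the diagonal case $i=j$, where the relevant $L$-function is Asai rather than Rankin--Selberg and the parity of the Asai constituent genuinely depends on the embedding sign; but your parity-matching for the composite embedding handles this too, once one unwinds that the Asai parity is dictated by the sign $\kappa$ of the embedding. You are right that the remaining work is tracking the sign conventions ($\Phi_N$, $w_c$, $\chi_\kappa$) through these identifications, and that is indeed the content of Mok's \S 5.6.
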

We can now rewrite~$\Idisci(f)$ in a form conducive to extracting bounds.
\begin{prop}\label{trace over SH}
	Let $\psi \in \PsiG$, and let $f \in \mathscr{H}(G)$ be factorizable. Then  \begin{align} \Idisci(f) &= \sum_{s_H \in \mS_{\psi}} C(\psi, s_H)\epsilon^{G^*}_\psi(s_\psi s_H) \prod_v\left(\sum_{\pi_v \in \Pi_{\psi_v}} \ip{s_{\psi_v} s_{H_v}}{\pi_v}  \tr \pi_v(f_v)\right)  \nonumber \\
		&\label{intermediate version} = \sum_{s_H \in \mS_\psi} C(\psi,s_H) \sum_{\pi \in \Pi_{\psi}} \epsilon^{G^*}_\psi(s_\psi s_H) \ip{s_\psi s_H}{\pi} \tr \pi (f). 
	\end{align}
\end{prop}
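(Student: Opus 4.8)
The plan is to assemble Proposition~\ref{trace over SH} by chaining together the identities already in place, then converting products of local traces into a single global trace.

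\textbf{Step 1: Start from the re-indexed stabilization.} I would begin with equation~\eqref{simple form in terms of Spsi}, which already expresses $\Idisci(f)$ as a sum over $s_H \in \mS_\psi$ of terms $C(\psi, s_H)\,\epsilon_\psi^H(s_{\psi^H}^H)\, f^H(\psi^H)$, where the pair $(H,\psi^H)$ is the one corresponding to $s_H$ under the bijection of Lemma~\ref{lemma on endoscopic bijection}. The constant $C(\psi,s_H)$ is the one produced in Lemma~\ref{lemma with the constant}, which absorbs $\iota(G,H)$, $\sigma(\bar S^0_{\psi^H})=1$, and $|\mS_{\psi^H}|^{-1}$, and is uniformly bounded above and away from zero.

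\textbf{Step 2: Rewrite the epsilon factor on $H$ in terms of $G^*$.} Apply the endoscopic sign lemma (Lemma~\ref{endoscopic sign lemma}) to replace $\epsilon_\psi^H(s_{\psi^H}^H)$ by $\epsilon^{G^*}_\psi(s_\psi s_H)$. This is a purely notational substitution inside the sum; no convergence or measure-theoretic issue arises.

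\textbf{Step 3: Expand $f^H(\psi^H)$ via local character identities.} Since $f = \prod_v f_v$ is factorizable and $f^H(\psi^H)$ was defined as $\prod_v f^{H_v}(\psi_v^H)$, apply the local character identity at each place. At the archimedean and non-split non-archimedean places this is Theorem~\ref{local character identities for general unitary groups}, giving $f^{H_v}(\psi_v^H) = e(G_v)\sum_{\pi_v \in \Pi_{\psi_v}} \ip{s_{\psi_v} s_{H_v}}{\pi_v} \tr\pi_v(f_v)$; at split places $G_v \simeq GL_{N,v}$, where by Theorem~\ref{lci for GLN} (via van Dijk) the same formula holds with the packet a singleton, the pairing trivial, and $e(G_v)=1$. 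The product of the Kottwitz signs over all places is $\prod_v e(G_v) = 1$ by Kottwitz's global vanishing result cited in Section~\ref{Kottwitz signs subsection}, so the signs disappear from the global product. Substituting this into Step~2's expression yields the first displayed equality of the proposition.

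\textbf{Step 4: Interchange product and sum to form a global trace.} Expanding $\prod_v\big(\sum_{\pi_v \in \Pi_{\psi_v}} \ip{s_{\psi_v}s_{H_v}}{\pi_v}\tr\pi_v(f_v)\big)$ gives a sum over tuples $(\pi_v)_v$ with $\pi_v \in \Pi_{\psi_v}$; for all but finitely many $v$ the function $f_v$ is the unit of the Hecke algebra and only the unramified $\pi_v$ (with trivial pairing value) contributes, so the tuples that survive are exactly the elements $\pi = \otimes_v \pi_v$ of the global packet $\Pi_\psi$ as defined in Section~\ref{section on packets}. Using multiplicativity of the trace, $\prod_v \tr\pi_v(f_v) = \tr\pi(f)$, and multiplicativity of the global pairing~\eqref{global pairing}, $\prod_v \ip{s_{\psi_v}s_{H_v}}{\pi_v} = \ip{s_\psi s_H}{\pi}$ (compatibility under the localization maps $\mS_\psi \to \mS_{\psi_v}$), the product over $v$ collapses to $\sum_{\pi \in \Pi_\psi} \ip{s_\psi s_H}{\pi}\tr\pi(f)$. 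Since $\epsilon^{G^*}_\psi(s_\psi s_H)$ and $C(\psi,s_H)$ are independent of $\pi$, they can be pulled inside the $\pi$-sum, giving exactly the second displayed equality~\eqref{intermediate version}.

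\textbf{Main obstacle.} The only genuinely substantive point is the bookkeeping in Step~4: verifying that the tuples appearing after expanding the product are precisely the members of $\Pi_\psi$ (i.e., that the almost-everywhere-unramified condition in the definition of the global packet matches the vanishing of all but finitely many factors), and that the local pairings multiply to the global pairing under the localization maps. Both facts are built into the definitions recalled in Section~\ref{section on packets} and in Lemma~\ref{lemma on endoscopic bijection}, so the proof is essentially a matter of carefully threading these compatibilities together; there is no new analytic input required.
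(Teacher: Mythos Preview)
Your proposal is correct and follows essentially the same route as the paper's proof: start from~\eqref{simple form in terms of Spsi}, apply the endoscopic sign lemma and the local character identities (with the Kottwitz signs cancelling globally), then interchange the product over places with the sum over local packet members using the almost-everywhere-unramified condition and the definition of the global pairing. The only cosmetic difference is that the paper applies the local character identities before invoking Lemma~\ref{endoscopic sign lemma}, whereas you swap the order; since these two substitutions are independent, this has no effect.
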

\begin{proof}
	We start from the equality \eqref{simple form in terms of Spsi}: \[ \Idisci(f) = \sum_{s_H \in \mS_{\psi}} C(\psi, s_H) \epsilon_\psi^H(s_\psi^H)f^H(\psi^H).\] The distribution $f^H(\psi^H)$ was defined as $f^H(\psi^H) = \prod_v f_v^{H_v}(\psi_v^H)$. Each local factor can be written in terms of the trace of representations in $\Pi_{\psi_v}$ by Theorems \ref{lci for qs}, \ref{local character identities for general unitary groups}, and ~\ref{lci for GLN}. In all cases, the identity is: \[ f_v^{H_v}(\psi_v^{H_v}) = e(G_v) \sum_{\pi_v \in \Pi_{\psi_v}} \ip{s_{\psi_v} s_{H_v}}{\pi_v}  \tr \pi_v (f_v). \] The local Kottwitz signs cancel out globally, and using Lemma \ref{endoscopic sign lemma}, we rewrite \[ \Idisci(f) = \sum_{s_H \in \mS_{\psi}} C(\psi, s_H)\epsilon^{G^*}_\psi(s_\psi s_H) \prod_v \left(\sum_{\pi_v \in \Pi_{\psi_v}} \ip{s_{\psi_v} s_{H_v}}{\pi_v}  \tr \pi_v(f_v)\right). \]  At all but finitely many~$v$, we have~$f_v = 1_{K_v}$ for a hyperspecial maximal compact subgroup~$K_v$. At these places,~$\tr \pi_v (f_v)$ is only nonzero on~$K_v$-unramified representations~$\pi_v$. Unramified local packets contain exactly one unramified representation following~\cite[Proposition 1.5.2 (5)]{KMSW} so we interchange the sum and product to get \[ \prod_v \left(\sum_{\pi_v \in \Pi_{\psi_v}} \ip{s_{\psi_v} s_{H_v}}{\pi_v}  \tr \pi_v(f_v)\right) = \sum_{\pi \in \Pi_{\psi}} \left(\prod_v\ip{s_{\psi_v} s_{H_v}}{\pi_v} \right) \tr \pi (f)  \] for~$\pi = \ten_v \pi_v$. Using the definition~$\ip{\cdot}{\pi} := \prod_v\ip{\cdot}{\pi_v}$, we rewrite \[\Idisci(f) = \sum_{s_H \in \mS_\psi} C(\psi,s_H) \sum_{\pi \in \Pi_{\psi}} \epsilon^{G^*}_\psi(s_\psi s_H) \ip{s_\psi s_H}{\pi} \tr \pi (f). \]
\end{proof}

\subsection{Upper Bounds and the Dominant Group} \label{Bounds by the Dominang Group}

Recall once more the bijection~$ (H,\psi^H) \leftrightarrow (\psi, s_H)$ from \ref{section on endoscopic data and parameters}. We will single out one object on either side, and show that for certain~$f$, its contribution to the distribution~$\Idisci(f)$ bounds the others. Recall that $s_\psi \in \mS_{\psi}$ was the image of the matrix~$-I \in SL_2$ under~$\psi$. 

\begin{definition}
	Let $(H_\psi, \psi^{H_\psi})$ be the pair corresponding to the pair $(\psi, s_\psi)$ containing the distinguished element $s_\psi$ under the bijection $ (H,\psi^H) \leftrightarrow (\psi, s_H)$.
\end{definition}
Note that it is possible that $H_\psi = G$, for example when $\psi$ is bounded. \begin{definition}\label{defstableshorthand}
	Let~$\psi \in \PsiG$ and let~$(H,\xi_H,s_H)$ be such that~$\psi$ factors through~$\xi_H$. Let~$f$ be a global test function. Then define \[ S(\psi,s_H, f) = C(\psi,s_H) \sum_{\pi \in \Pi_{\psi}} \epsilon^{G^*}_\psi(s_\psi s_H) \ip{s_\psi s_H}{\pi} \tr \pi (f). \]
\end{definition}
Proposition \ref{trace over SH} can then be reformulated as stating that: 
\begin{equation}\label{eqTF simplified}
	\Idisci(f) = \sum_{s_H \in \mS_\psi} S(\psi,s_H,f). 
\end{equation} 
\begin{lemma} \label{lemma main coincidence}
	If $H = H_\psi$, then \begin{equation}
		S(\psi,s_\psi,f) = C(\psi,s_\psi)\sum_{\pi \in \Pi_\psi} \tr(\pi)(f).
	\end{equation} 
\end{lemma}
\begin{proof}
	This follows since $s_{H_\psi} = s_\psi$ by definition. Since $\mS_\psi \simeq (\BZ/2\BZ)^{r-1}$, this implies that $\epsilon_\psi^{G^*}(s_\psi^2) = 1$ and $\ip{s_\psi^2}{\pi}= 1$ for all $\pi$.
\end{proof}

This allows us to state our main application of the stable trace formula.
\begin{theorem} \label{main theorem}
	Let $G$ be a unitary group, let~$\psi \in \PsiG$, and let~$f \in \mathscr{H}(G)$ be a factorizable test function with~$\tr \pi(f)$ real and nonnegative for all~$\pi \in \Pi_\psi$. Then there exist a constant~$C(\psi)$ such that \[ \Idisci(f) \leq C(\psi) S(\psi,s_\psi, f). \] The constant $C(\psi)$ satisfies $2^{-(N+1)} \leq C(\psi) \leq 2^{2N}$; it is thus bounded above and below independently of $\psi$. 
\end{theorem}
\begin{proof}
	We compare the various terms appearing in~\eqref{eqTF simplified}: 	\[	\Idisci(f) = \sum_{s_H \in \mS_\psi} S(\psi,s_H,f).  \] Ignoring for a moment the constants $C(\psi, s_H)$, the summands only differ from one another via the signs~$\epsilon^{G^*}_\psi(s_\psi s_H) \ip{s_\psi s_H}{\pi} \in \{\pm 1\}$ appearing as coefficients of the traces~$\tr \pi(f)$. In the term coming from~$s_\psi$, we get \[ S(\psi, s_\psi,f) = C(\psi, s_\psi) \sum_{\pi \in \Pi_\psi} \tr(\pi)(f) \] from Lemma \ref{lemma main coincidence}. For any other~$s_H \in \mS_{\psi}$, the coefficients~$\epsilon^{G^*}_\psi(s_\psi s_H) \ip{s_\psi s_H}{\pi}$ have the potential to be equal to~$-1$. Thus if~$\tr \pi(f) \geq 0$ for all~$ \pi \in \Pi_{\psi}$, we have \begin{align*}
		S(\psi, s_H, f)  &= C(\psi,s_H) \sum_{\pi \in \Pi_{\psi}} \epsilon^{G^*}_\psi(s_\psi s_H) \ip{s_\psi s_H}{\pi} \tr \pi (f) \\ &\leq C(\psi, s_H) \sum_{\pi \in \Pi_\psi} \tr(\pi)(f) = 	\frac{C(\psi, s_H)}{C(\psi, s_\psi)} \cdot S(\psi, s_\psi, f). 
	\end{align*} Summing over the~$s_H$ we get \[ \Idisci(f) \leq \left(\frac{\sum_{s_H \in \mS_{\psi}}{C(\psi, s_H)}}{C(\psi, s_\psi)}\right) S(\psi,s_\psi, f): = C(\psi)S(\psi,s_\psi, f) . \] For the bounds, we showed in Lemma \ref{lemma with the constant} that $2^{-(N+1)} \leq C(\psi, s_H) \leq 1$. As for the cardinality of~$\mS_{\psi}$, it is bounded between $1$ and $2^{N-1}$ as we saw in Section \ref{section where we compute the stabilizers}. 
\end{proof}
In practice, the group $H_\psi$ is easily computed from $\psi \mid_{SL_2}$. \begin{lemma}
	Let $\psi = \boxplus_i (\mu_i \boxtimes \nu(m_i)) \in \Psi(N)$ be a global square-integrable Arthur parameter, and let $N_1 = \sum_{m_i \equiv 1 \mathrm{ mod }\, 2} m_i$. Then the group $H_\psi$ is \[ H_\psi  = U(N_1) \times U(N-N_1). \] 
\end{lemma}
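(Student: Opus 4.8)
The plan is to read off $H_\psi$ directly from its defining property. By Lemma~\ref{lemma on endoscopic bijection}, the pair $(H_\psi,\psi^{H_\psi})$ is the one corresponding under the bijection $(H,\psi^H)\leftrightarrow(\psi,s_H)$ to the element $s_\psi\in\mS_\psi$ of~\eqref{spsi}; by the axioms for an endoscopic datum recalled in Section~\ref{Section on Endoscopic data.}, this forces $\xi_{H_\psi}(\hat H_\psi)=\mathrm{Cent}(s_\psi,\hat G)^0$, the connected centralizer of $s_\psi$ in $\hat G=GL_N(\BC)$. Moreover the equivalence class of an endoscopic datum of $U(N)$ is determined by the unordered partition $N=N_1+N_2$, with the sign $\underline{\kappa}=((-1)^{N-N_1},(-1)^{N-N_2})$ then fixed. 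So it suffices to compute the eigenvalue multiplicities of $s_\psi$ as an element of $GL_N(\BC)$.

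First I would make $s_\psi$ explicit. Viewing $\psi$ (via $\emb$) as the $N$-dimensional representation $\boxplus_i \mu_i\boxtimes\nu(m_i)$ of $L_F\times SL_2(\BC)$, the Arthur $SL_2$ acts on the $i$-th block through $\nu(m_i)=\mathrm{Sym}^{m_i-1}$ of the standard representation, so $-I\in SL_2(\BC)$ acts there by the scalar $(-1)^{m_i-1}$. Hence $s_\psi=\psi(1,\mathrm{diag}(-1,-1))$ acts as $+1$ on every block with $m_i$ odd and as $-1$ on every block with $m_i$ even. Consequently $s_\psi$ is semisimple in $GL_N(\BC)$, its $(+1)$-eigenspace has dimension $\sum_{m_i\equiv 1\,(2)}m_i=N_1$, and its $(-1)$-eigenspace has dimension $N-N_1$. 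Therefore $\mathrm{Cent}(s_\psi,\hat G)^0=GL_{N_1}(\BC)\times GL_{N-N_1}(\BC)$, which is the dual group of $U(N_1)\times U(N-N_1)$, and combining with the classification of endoscopic data above we get $H_\psi=U(N_1)\times U(N-N_1)$.

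The only points needing care — and the part I would write out most carefully — are the interface conventions. Since $s_\psi$ is only well defined in $\mS_\psi=S_\psi/Z(\hat G)^{W_F}$ and here $Z(\hat G)^{W_F}=\{\pm 1\}$, the element $s_\psi$ and $-s_\psi$ are identified; but $-s_\psi$ merely interchanges the two eigenspaces, giving the datum $U(N-N_1)\times U(N_1)$, which is the same endoscopic datum, so there is no ambiguity. One should also record the degenerate case in which all $m_i$ are odd: then $N_1=N$, $s_\psi$ is trivial in $\mS_\psi$, and $H_\psi=U(N)\times U(0)=G$, consistent with the remark following Definition~\ref{defstableshorthand}. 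Finally one must check that the datum is indeed $G$-relevant when $\psi\in\PsiG$ and that $\psi$ factors through $\xi_{H_\psi}$, but both are immediate from Lemma~\ref{lemma on endoscopic bijection} since $s_\psi\in S_\psi$. No step here is a genuine obstacle; the content is entirely the computation $\nu(m)(-I)=(-1)^{m-1}$ together with the parametrization of $\mE(U(N))$ by eigenspace dimensions of the associated semisimple element.
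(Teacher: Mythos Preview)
Your proof is correct and follows essentially the same route as the paper: both compute $\nu(m)(-I)=(-1)^{m-1}I_m$, read off the eigenspace dimensions of $s_\psi$ in $GL_N(\BC)$, and invoke the classification of $\mE(U(N))$ to identify $H_\psi$. Your additional remarks on the $\pm1$ ambiguity in $\mS_\psi$ and the degenerate case are welcome bookkeeping but not needed for the argument.
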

\begin{proof}
	By definition $ s_\psi= \psi(1,-I) \in GL_N$. The image of~$-I$ under the~$m$-dimensional representation of $SL_2$ is $(-1)^{m+1}I_m$. Thus~$s_\psi = \text{diag}(-I_{N_1}, I_{N_2})$, where $N_1 = \sum_{m_i \equiv 1 \mathrm{ mod }\, 2} m_i$ and $N_2 = N-N_1$, with centralizer $GL_{N_1 } \times GL_{N_2}$. 
\end{proof}
	The image $\psi(SL_2)$ and the group $H_\psi$ are determined by any localization $\psi_v(SL_2)$.  In Section \ref{section on cohomology}, we will use this, together with the known (archimedean) parameters of cohomological representations, to bound growth of cohomology.

\subsection{Hyperendoscopy} 
\label{section on hyperendoscopy} We recall the notion of hyperendoscopic datum first introduced by Ferrari \cite{Fe}. We will use it to bound the expression $S(\psi, s_\psi, f)$. As pointed out by Dalal \cite{Dalal20Sato}, the results of \cite{Fe} do not quite hold in full generality, but they do hold for unitary groups, which have simply connected derived subgroups.

\begin{definition}
	A chain of hyperendoscopic data for the (local or global) group~$G$ is a collection \[ \mH = (G, H_1,...,H_q), \] where~$H_1$ is a proper endoscopic datum for $G$ and $H_{i+1}$ is a proper endoscopic datum for $H_i$. 
\end{definition} 
The integer $p(\mH) = q$ is the \emph{depth} of the datum. Denote  \[ \iota(\mH) = (-1)^{p(\mH)}\iota(G,H_1)\cdot \iota(H_1,H_2)\cdot ... \cdot \iota(H_{q-1},H_q), \] and $\Idisc^{\mH} := \Idisc^{H_q}$. As with endoscopic data, two chains of hyperendoscopic data will be considered equivalent if they are conjugate under $\hat{G}$. Ferrari denotes by~$\mH\mE(G)$ the collection of equivalence classes of chains of hyperendoscopic data for~$G$.  If~$\psi \in \Psi(G)$ is an Arthur parameter, we will denote by $\mH\mE(G,\psi)$ the collection of equivalence classes of chains of hyperendoscopic data $\mH \in \mH\mE(G)$ such that~$\psi$ factors through~ the embedding $\xi_{p(\mH)}$ associated to $H_{p(\mH)}$. Note that the depth of chains in $\mH\mE(G,\psi)$ is bounded above by the number of simple constituents of $\psi$.

If $\mH \in \mH\mE(G)$ is a chain of hyperendoscopic data, and~$f^G$ is a test function, we inductively define~$f^{H_{i+1}} = (f^{H_i})^{H_{i+1}}$. 
The function $f^{H_{p(\mH)}}$ depends on a choice of transfer $f^{H_i}$ at each step. We allow this, but require that our choice of $f^{H_i}$ be consistent over chains that are truncations of one another. 
The following is the specialization to a parameter~$\psi$ of a trick initially discovered by Ferrari \cite[3.4.2]{Fe}. 
\begin{proposition}\label{prop hyperendoscopy} Let~$(G, \eta) \in \tilde{\mE}(N)$ be quasisplit and let~$\psi^N \in \Psi(N)$. Then \[ S^G_{\text{disc}, \psi^N, \eta}(f) = \sum_{\mH \in \mH\mE(G,\psi)} \iota(\mH) I_{\text{disc},\psi^N, \eta \circ \xi_{p(\mH)}}^{H_q} (f^{H_q}) \]
\end{proposition}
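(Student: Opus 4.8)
The plan is to prove Proposition~\ref{prop hyperendoscopy} by induction on the depth of the hyperendoscopic chains, using the ordinary stabilization~\eqref{stable trace formula} as the base case and iterating it. The key observation is that~\eqref{stable trace formula}, specialized to the parameter~$\psi$, reads
\[ \Idisci(f) = \sum_{H \in \mE(G)} \iota(G,H)\, \SHdisci(f^H) \]
where the sum is over endoscopic data through which~$\psi$ factors (the other terms vanish, since~$S^H_{\disc,\psi}$ is built only from the constituent~$\psi^H$ with~$\psi^{N,H} = \psi^N$, which requires~$\psi$ to factor through~$\xi_H$). Isolating the term~$H = G$, for which~$\iota(G,G) = 1$ and~$\SGdisci(f) = \SGdisci(f)$, gives
\[ \SGdisci(f) = \Idisci(f) - \sum_{\substack{H \in \mE(G) \\ H \text{ proper}}} \iota(G,H)\, \SHdisci(f^H). \]

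First I would set up the induction on~$p = p(\mH)$, or more precisely on the number of simple constituents of~$\psi$, which bounds the depth of any chain in~$\mH\mE(G,\psi)$; since each proper endoscopic step strictly decreases this quantity (an endoscopic group~$U(N_1)\times U(N_2)$ of~$U(N)$ through which~$\psi$ factors distributes the simple summands of~$\psi$ nontrivially between the two factors when the datum is proper), the recursion terminates. For the inductive step, I apply the displayed identity above to each proper~$H$, substituting the inductive hypothesis
\[ \SHdisci(f^H) = \sum_{\mH' \in \mH\mE(H,\psi)} \iota(\mH')\, I^{H'_{q'}}_{\disc,\psi}\big((f^H)^{H'_{q'}}\big). \]
Prepending~$G$ to each chain~$\mH' = (H, H'_1,\dots)$ yields a chain~$\mH = (G, H, H'_1,\dots) \in \mH\mE(G,\psi)$, and every chain in~$\mH\mE(G,\psi)$ of positive depth arises this way from a unique~$H = H_1$ and a unique~$\mH'$. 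Under this correspondence one checks~$\iota(\mH) = -\iota(G,H)\iota(\mH')$ (the extra sign and factor~$\iota(G,H)$ from the definition of~$\iota(\mH)$), so that~$-\iota(G,H)\iota(\mH') = \iota(\mH)$ exactly cancels the overall minus sign in front of the proper-$H$ sum; meanwhile the depth-zero chain~$\mH = (G)$ contributes~$\iota((G)) = 1$ and the term~$\Idisci(f)$. Assembling the pieces gives the claimed formula. I would also record that the transfer functions are well-defined along the iteration: this is precisely the consistency-of-truncations condition imposed before the proposition, so~$(f^H)^{H'_{q'}} = f^{H_{p(\mH)}}$ unambiguously.

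The main obstacle I anticipate is bookkeeping rather than conceptual: one must verify that the passage ``$\mH' \leftrightarrow \mH$ by prepending~$G$'' is a genuine bijection between~$\bigsqcup_{H \text{ proper}} \mH\mE(H,\psi)$ and the positive-depth part of~$\mH\mE(G,\psi)$, compatibly with the equivalence relation (conjugacy under~$\hat G$ versus~$\hat H$), and that the multiplicativity of the~$\iota$-factors and the sign~$(-1)^{p(\mH)}$ track correctly through the recursion. A secondary point requiring care is that~\eqref{stable trace formula} is stated for~$\psi^N \in \tilde\Psi(N)$ and for each~$G \in \tilde\mE(N)$; since every proper endoscopic datum~$H = U(N_1)\times U(N_2)$ of such a~$G$ is again an element of~$\tilde\mE(N)$ and the induction only ever descends through such data, the hypothesis is preserved at every stage, so the theorem applies uniformly along the chain. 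Finally, one should note that the vanishing of~$\SHdisci(f^H)$ for~$H$ not admitting~$\psi$ (equivalently, the reduction of the sum in~\eqref{stable trace formula} to~$\mE(G)$-data through which~$\psi$ factors) is what makes~$\mH\mE(G,\psi)$, rather than all of~$\mH\mE(G)$, the correct index set; this follows from the definition of~$\Idisci$ as the~$\psi^N$-isotypic piece together with the inductive definition of~$S^H_{\disc}$.
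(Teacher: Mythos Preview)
Your proposal is correct and follows essentially the same argument as the paper: isolate the $H=G$ term in the stabilization~\eqref{stable trace formula} to express $\SGdisci(f)$ as $\Idisci(f)$ minus the proper endoscopic contributions, then apply the inductive hypothesis to each $\SHdisci(f^H)$ and reindex via the bijection between chains in $\mH\mE(H,\psi)$ and positive-depth chains in $\mH\mE(G,\psi)$ obtained by prepending $G$. Your treatment is in fact more explicit than the paper's about the sign and $\iota$-factor bookkeeping, the termination of the induction, and the restriction of the sum to $\mH\mE(G,\psi)$.
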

\begin{proof} If $\psi^N \in \Psi_{\text{sim}}(N)$, then  $I^G_{\text{disc}, \psi^N, \eta}(f) = S^G_{\text{disc}, \psi^N, \eta}(f)$ and the result holds trivially. Otherwise, we have that \begin{equation} \label{eq first hyperendoscopy} S^G_{\text{disc}, \psi^N, \eta}(f) = I^G_{\text{disc}, \psi^N, \eta}(f) - \sum_{H \in \mE(G, \psi^N)} \iota(G,H)S^H_{\text{disc}, \psi^N, \eta \circ \xi}(f^H).  \end{equation} By induction, for each $H$ in $\mE(G,\psi)$, we have \begin{equation} \label{eqn induction hypothesis hyperendoscopy} S^H_{\text{disc}, \psi^N, \eta \circ \xi}(f^H) =\sum_{\mH \in \mH\mE(G,\psi)} \iota(\mH) I_{\text{disc},\psi^N, \eta \circ \xi_{p(\mH)}}^{H_q} (f^{H_q}).  \end{equation} By construction, each~$\mH \in \mH\mE(G, \psi^N)$ is obtained from a hyperendoscopic datum~$\mH' \in \mH\mE(H, \psi^N)$ for some~$H \in \mE(G, \psi^N)$, and~$p(\mH) = p(\mH') +1$. Substituting~\eqref{eqn induction hypothesis hyperendoscopy} into~\eqref{eq first hyperendoscopy} yields the result.

\end{proof}
Recall that when~$G \in \tilde{\mE}_{sim}(N)$, the map~$\psi \mapsto \psi^N$ is injective. On the other hand, if~$H$ is an product of unitary groups, there could be several parameters~$\psi^H$ for~$H$ such that~$\psi^H \mapsto \psi^N$ under $\bcend$. From~\cite[\S 5.6]{Mok} we see that if~$H = H_1 \times H_2$ with~$H_i = U(N_i)$, and~$f^H = f^{H_1} \times f^{H_2}$, then \[ S^H_{\text{disc},\psi^N, \bcend}(f^H) = \sum_{\psi^H = \psi_1 \times \psi_2, \; \psi^H \mapsto \psi^N} S^{H_1}_{\mathrm{disc}, \psi_1}(f^{H_1}) \times S^{H_2}_{\mathrm{disc}, \psi_2}(f^{H_2}).\] The expression~$S(\psi,s_H,f)$ of Definition~\ref{defstableshorthand} picks out one of these summands. \begin{definition}
Let~$H = H_1 \times H_2$ as above, and let~$\mH \in \mH\mE(H)$ with~$p(\mH) = q$. Then~$H_q = H_{q_1} \times H_{q_2}$ with~$H_{q_i} \in \mH\mE(H_i)$. Let~$\psi^H = \psi_1 \times \psi_2 \in \Psi(H,\bcend)$. For a test function~$f^{H_q} = f^{H_{q_1}} \times f^{H_{q_2}}$, define \[ I^{H_q}_{\text{disc}, \psi^H}(f^H) = I^{H_{q_1}}_{\text{disc}, \psi_1}(f^{H_{q_1}}) \times I^{H_{q_2}}_{\text{disc}, \psi_2}(f^{H_{q_2}}). \] 
\end{definition} 
Here we use the notation $I^{H_{q_i}}_{\text{disc},\psi_i}$ as in Section \ref{section on the trace formula} since $U(N_i) \in \tilde{\mE}_{sim}(N_i)$. 
\begin{corollary} Let~$H = H_1 \times H_2$ as above and let~$\psi^H = \psi_1 \times \psi_2 \in \Psi(H, \embend)$ so that~$(H, \psi^H)$ corresponds to~$(\psi^N,s_H)$ under the correspondence of Lemma~\ref{lemma on endoscopic bijection}. Assume that $f^H = f^{H_1} \times f^{H_2}$. Then 
	\[S(\psi,s_H,f) = \iota(G,H) \sum_{\mH \in \mH\mE(H,\psi)} \iota(\mH)  I^{H_q}_{\text{disc},\psi^H}(f^{H_q}). \] \label{corollary on hyperendoscopy S(psi,sH,f)}
\end{corollary}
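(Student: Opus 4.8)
The plan is to combine three ingredients from earlier in the paper: the repackaging of $S(\psi,s_H,f)$ carried out inside the proof of Proposition~\ref{trace over SH}, the multiplicativity of the stable terms over the two factors of $H$, and two applications of the hyperendoscopic expansion of Proposition~\ref{prop hyperendoscopy}.

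First I would rewrite $S(\psi,s_H,f)$ as a single refined stable distribution. Running the chain of identities in the proof of Proposition~\ref{trace over SH} in reverse --- the local character identities (Theorems~\ref{lci for qs}, \ref{local character identities for general unitary groups}, and~\ref{lci for GLN}), the global cancellation $\prod_v e(G_v)=1$ of Kottwitz signs, and the endoscopic sign lemma (Lemma~\ref{endoscopic sign lemma}) --- the expression of Definition~\ref{defstableshorthand} becomes
\[ S(\psi,s_H,f) = C(\psi,s_H)\,\epsilon^{H}_\psi(s^{H}_{\psi^H})\,f^{H}(\psi^H), \qquad f^{H}(\psi^H) = \prod_v f^{H_v}_v(\psi^H_v). \]
Substituting $C(\psi,s_H)=\iota(G,H)\,\sigma(\bar{S}^0_{\psi^H})\,|\mS_{\psi^H}|^{-1}$ from Lemma~\ref{lemma with the constant} and comparing with the stable multiplicity formula~\eqref{stable multiplicity formula for endoscopic groups} for $H$, I would observe that $S(\psi,s_H,f)$ is exactly $\iota(G,H)$ times the one summand indexed by $\psi^H$ in the expansion of $\SHdisci(f^H)$.

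Next I would split this summand along the factors of $H$. Since $H=H_1\times H_2$, $\psi^H=\psi_1\times\psi_2$, and $f^{H}=f^{H_1}\times f^{H_2}$, the quantities $\mS_{\psi^H}$, $s^{H}_{\psi^H}$, $\epsilon^{H}_\psi$ and $f^{H}(\psi^H)$ are all multiplicative over the two factors (this is the displayed identity from \cite[\S 5.6]{Mok} recalled before the statement), so the summand equals $S^{H_1}_{\mathrm{disc},\psi_1}(f^{H_1})\times S^{H_2}_{\mathrm{disc},\psi_2}(f^{H_2})$. As each $H_i=U(N_i)$ is a simple quasisplit unitary group, $\psi_i$ is the unique parameter over $\psi^{N_i}_i$ and $S^{H_i}_{\mathrm{disc},\psi_i}=S^{H_i}_{\mathrm{disc},\psi^{N_i}_i}$, so Proposition~\ref{prop hyperendoscopy} applies verbatim to each $H_i$:
\[ S^{H_i}_{\mathrm{disc},\psi_i}(f^{H_i}) = \sum_{\mH_i\in\mH\mE(H_i,\psi_i)}\iota(\mH_i)\,I^{H_{i,q_i}}_{\mathrm{disc},\psi_i}(f^{H_{i,q_i}}). \]
I would then multiply the two expansions: a pair of chains $(\mH_1,\mH_2)$ assembles into a chain $\mH\in\mH\mE(H,\psi)$ with endpoint $H_q=H_{1,q_1}\times H_{2,q_2}$ and $\iota(\mH)=\iota(\mH_1)\iota(\mH_2)$ (the depths add, so the global signs multiply, and each local factor $\iota(\cdot,\cdot)$ in the chain only involves the unitary factor being modified), and, after expanding each $I^{H_{i,q_i}}_{\mathrm{disc},\psi_i}$ into its refined pieces and recombining via the definition $I^{H_q}_{\mathrm{disc},\psi_1'\times\psi_2'}(f^{H_q})=I^{H_{1,q_1}}_{\mathrm{disc},\psi_1'}(f^{H_{1,q_1}})\times I^{H_{2,q_2}}_{\mathrm{disc},\psi_2'}(f^{H_{2,q_2}})$ given before the statement, the product of the two inner sums collapses to $\sum_{\psi_1'\times\psi_2'\in\Psi(H_q,\psi^N)}I^{H_q}_{\mathrm{disc},\psi_1'\times\psi_2'}(f^{H_q})$. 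Multiplying through by $\iota(G,H)$ then gives the asserted identity.

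The hard part is the bookkeeping in this last step: identifying $\mH\mE(H,\psi)$ with pairs of chains in $\mH\mE(H_1,\psi_1)\times\mH\mE(H_2,\psi_2)$ consistently with the paper's convention for hyperendoscopic data of products of unitary groups --- in particular the relative order in which endoscopic steps are taken on the two factors --- verifying the multiplicativity $\iota(\mH)=\iota(\mH_1)\iota(\mH_2)$ at each level, and checking that the double sum over refined parameters produced by the two separate applications of Proposition~\ref{prop hyperendoscopy} reassembles exactly into the single sum over $\Psi(H_q,\psi^N)$ written in the statement.
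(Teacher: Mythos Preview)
Your proposal is correct and follows essentially the same route as the paper's proof: both identify $S(\psi,s_H,f)/\iota(G,H)$ with the single summand $S^{H_1}_{\mathrm{disc},\psi_1}(f^{H_1})\times S^{H_2}_{\mathrm{disc},\psi_2}(f^{H_2})$ via the argument of Proposition~\ref{trace over SH} (run in reverse together with Lemma~\ref{endoscopic sign lemma} and the value of $C(\psi,s_H)$), apply Proposition~\ref{prop hyperendoscopy} separately to each factor, and then recombine. The only cosmetic difference is the order of presentation: the paper starts from the product of stable traces and shows it equals $S(\psi,s_H,f)/\iota(G,H)$, whereas you start from $S(\psi,s_H,f)$ and unwind it to the product; the ingredients and logic are the same. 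Your closing remark about the interleaving of chains on the two factors is well taken---the paper passes over this point without comment when it equates the product of the two hyperendoscopic sums with a single sum over $\mH\mE(H,\psi)$.
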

\begin{proof}
	We see in~\cite[\S 5.6]{Mok} that the term $ S^{H_1}_{\mathrm{disc}, \psi_1}(f^{H_1}) \times S^{H_2}_{\mathrm{disc}, \psi_2}(f^{H_2})$ is equal to \[ \frac{1}{|\mS_{\psi_1}||\mS_{\psi_1}|}\epsilon^{H_1}(\psi_1)\epsilon^{H_2}(\psi_2)f^{H_1}(\psi_1)f^{H_2}(\psi_2) = \frac{1}{|\mS_{\psi}|}\epsilon^H(\psi)f^H(\psi). \] By the argument of Proposition~\ref{trace over SH}, the last expression is equal to \begin{align*} \frac{1}{|\mS_{\psi}|}\sum_{\pi \in \Pi_{\psi}} \epsilon^{G^*}_{\psi}(s_{\psi} s_H) \ip{s_{\psi} s_H}{\pi} \tr \pi (f) &= \frac{S(\psi, s_H, f) \cdot }{|\mS_{\psi}|C(\psi, s_H)} 
	=\frac{S(\psi, s_H, f)}{\iota(G,H)}. \end{align*}
	Applying Proposition \ref{prop hyperendoscopy} to each factor of  $S^{H_1}_{\mathrm{disc}, \psi_1}(f^{H_1}) \times S^{H_2}_{\mathrm{disc}, \psi_2}(f^{H_2})$, we get \begin{align*}  \frac{S(\psi, s_H, f)}{\iota(G,H)} &= \left(  \sum_{\mH_1 \in \mH\mE(H_1, \psi_1)} \iota(\mH_1)  I^{H_{q_1}}_{\text{disc}, \psi_1}(f^{H_{q_1}}) \right) \cdot \left(  \sum_{\mH_2 \in \mH\mE(H_2, \psi_2)} \iota(\mH_2)  I^{H_{q_2}}_{\text{disc}, \psi_2}(f^{H_{q_2}}) \right) \\ 
& = \sum_{\mH \in \mH\mE(H, \psi)} \iota(\mH)  I^{H_q}_{\text{disc},\psi^H}(f^{H_q}) \end{align*}
\end{proof}

\section{Limit Multiplicity} \label{section on limit multiplicity}
Here we apply the results of the previous section to the limit multiplicity problem. 

\subsection{Level Structures} \label{section on level structures}
Let~$\mO_E$ and~$\mO_F$ be the rings of integers of the global fields~$E$ and~$F$. We introduce sets of places of~$F$: 
\begin{itemize}
	\item[-]~$S_f$ is a finite set of finite places of $F$, containing the places 
	which ramify in~$E$ as well as the places below those where the character~$\chi_{-}$ introduced in Section~\ref{section on notation} is ramified. 
	\item[-] $S_\infty$ is the set of all infinite places of~$F$. 
	\item[-] $S_0 \subsetneq S_\infty$ is a nonempty subset of the infinite places.
	\item[-] $S = S_f \cup S_\infty$.
\end{itemize} 
Note that the third requirement implies that~$F \neq \BQ$. Let~$\cp$ be an ideal of~$F$ with residue characteristic strictly greater than~$N^2[F:\BQ]+1$, corresponding to a place~$v_\cp \notin S$. For each finite place $v$ of $F$, denote by $\MO_{F_v}$ the ring of integers of $F_v$, and let $\hat{\MO}_F = \prod_v \MO_{F_v}$, and similarly for $\hat{\MO}_E$. We define the subgroups~$U(N,\cp^n) \subset U(N, \BAFf)$ to be \[ U(N,\cp^n) := \{ g \in U(N, \hat\MO_F) \subset GL_N(\hat\MO_E) \mid g \equiv I_N \; (\cp^n \MO_E) \}. \] For any finite place~$v$ of~$F$, let~$U(N,\cp^n)_v = U(N,\cp^n) \cap U(N)_v$. At the expense of possibly enlarging the set~$S_f$, note that for all~$v \notin S \cup \{v_\cp\}$, the subgroup~$U(N,\cp^n)_v$ is a hyperspecial maximal compact subgroup of~$U(N)_v$. This gives level structures on the quasisplit group~$U(N)$. If $H =U(N_1) \times ... \times U(N_r)$ is a product of quasisplit unitary groups, we define level subgroups~$H(\cp^n) = U(N_1, \cp^n)  \times ... \times U(N_r, \cp^n)$.  

Let~$(G,\xi)$ be an
inner form of~$U(N,F)$ defined with respect to a Hermitian inner product and with prescribed signatures~$U(a_v,b_v)$ at the archimedean places. We require that~$G_v$ be compact at the archimedean places contained in~$S_0$: this ensures that the group~$G$ is anisotropic. Following Proposition~\ref{classification of global inner forms}, if~$N$ is odd, the group~$G$ can be chosen so that~$G_v$ is quasisplit at all finite places. If $N$ is even, then $G$ is determined by choosing at most one place $v \in S_f$, up to again enlarging $S_f$. Once that choice is made, the group $G$ can be chosen to be quasisplit away from $\{v\} \cup S_\infty$. In both cases, this group $G$ is realized as an inner form $(G, \xi)$ as in Section \ref{section on inner forms}.

For each 
finite $v \notin S_f$, the inner twist induces isomorphisms $\xi_v: G_v \simeq U(N)_v$.
For each natural number~$n$, we fix a compact subgroup~$K(\cp^n) = \prod_v K_v(\cp^n)$ of~$G(\BAF)$ 
as follows:  at all finite~$v \notin S$, we
let~$K_v(\cp^n) = \xi_v^{-1}(U(N,\cp^n)_v)$; at $v \in S_f$, 
the subgroup~$K_v(\cp^n)$ is an arbitrary open compact 
subgroup fixed once and for all independently of $n$; at the archimedean places we let $K_v(\cp^n) \simeq U_{a_v}(\BR) \times U_{b_v}(\BR)$
be a maximal compact subgroup. Let~$K_f(\cp^n) = \prod_{v<\infty} K_v(\cp^n)$ and $K_\infty(\cp^n) = \prod_{v \mid \infty} K_v(\cp^n)$.   We may use the notation~$K_v$ instead of~$K_v(\cp^n)$ for $v \neq v_\cp$.  We extend these definitions to products of unitary groups.

We now define the (cocompact since $G$ is anisotropic) lattices $$ \Gamma(\cp^n) := G(F) \cap K_f(\cp^n).$$ Recall that $G_\infty = \prod_{v \mid \infty} G_v$ and let $X_G = G_\infty /K_\infty Z_{G_\infty}$. Assume that $G_\infty$ has at least one noncompact factor. The diagonal embedding~$ \Gamma(\cp^n) \hookrightarrow \prod_{v \mid \infty} G_v$ induces an action~$\Gamma(\cp^n) \curvearrowright X_G$, and we let~$X(\cp^n) := \Gamma(\cp^n) \dom X_G$. We start by comparing them to their disconnected counterparts realized as adelic double quotients. Let \[Y(\cp^n) = G(F) \dom G(\BAF) /K(\cp^n) Z_G(\BAF). \] The quotient $Y(\cp^n)$ is a disjoint union of finitely many connected locally symmetric spaces, each associated to a conjugate ot $K(\cp^n)$. In particular, the summand corresponding to $K(\cp^n)$ is $X(\cp^n)$. 

\begin{proposition} \label{proposition on the set of components}
	Let $G$ be an inner form of $U(N)$ and $Y(\cp^n)$ be defined as above. The cardinality of the set of components $\pi_0(Y(\cp^n))$ is bounded independently of~$n$. 
\end{proposition}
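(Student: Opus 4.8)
The plan is to realize $\pi_0(Y(\fp^n))$ as a quotient of a fixed finite adelic group, independent of $n$. Recall that $Y(\fp^n) = G(F)\dom G(\BAF)/K(\fp^n)Z_G(\BAF)$. Since $G_\infty$ has a noncompact factor and $G$ is semisimple (being an inner form of $U(N)$), strong approximation does not apply to $G$ directly, but the set of connected components is controlled by abelianizing. First I would use the fact that $K(\fp^n)$ contains a fixed open compact subgroup at the finitely many ``bad'' places $S_f$ and at $v_\fp$ its image stabilizes: for $v\notin S\cup\{v_\fp\}$ the group $K_v(\fp^n)$ is hyperspecial maximal compact for all $n$ (after possibly enlarging $S_f$ as in the construction of the level structures), and only at the single place $v_\fp$ does the group shrink with $n$. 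So the ``shrinking'' happens at exactly one place.

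The key step is to pass to the derived/simply-connected cover. Because $U(N)$ and its inner forms have simply connected derived subgroup $G^{\mathrm{der}}=G^{\mathrm{sc}}$, strong approximation holds for $G^{\mathrm{sc}}$ relative to the noncompact archimedean place: $G^{\mathrm{sc}}(F)$ is dense in $G^{\mathrm{sc}}(\BAF^{S_0})$, hence $G^{\mathrm{sc}}(F)\cdot\prod_{v\in S_0}G^{\mathrm{sc}}(F_v)$ is dense in $G^{\mathrm{sc}}(\BAF)$. Consequently the image of $G^{\mathrm{sc}}(\BAFf)$ inside $G(\BAFf)$, together with $G(F)$, surjects onto $Y(\fp^n)$ up to the obstruction measured by the abelianization. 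Concretely, I would invoke the exact sequence $1\to G^{\mathrm{sc}}\to G\to D\to 1$ with $D$ the cocenter (a torus), and the resulting pointed-set exact sequence in Galois/adelic cohomology; the number of components of $Y(\fp^n)$ is then bounded by the cardinality of the double coset space
\[
D(F)\dom D(\BAFf)/\overline{\mathrm{im}\,K_f(\fp^n)},
\]
plus a contribution from $\ker(H^1(F,G^{\mathrm{sc}})\to H^1(F,G))$, which is finite and $n$-independent (in fact trivial here since $G^{\mathrm{sc}}$ is simply connected, by Kneser/Harder for local fields and the Hasse principle). Since $D$ is a torus of bounded rank and $K_f(\fp^n)$ differs from $K_f(\fp)$ only at $v_\fp$ where the index $[K_{v_\fp}(1):K_{v_\fp}(\fp^n)]$ is a power of $\norm(\fp)$, the image of $K_f(\fp^n)$ in $D(\BAFf)$ stabilizes for $n$ large (a $\fp$-power-index subgroup of a fixed compact group $D(\MO_{F,v_\fp})$ has image with bounded index in the finite quotient $D(F)\dom D(\BAFf)/D(\hat\MO_F)$, and that finite quotient is itself $n$-independent). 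Hence $|\pi_0(Y(\fp^n))|$ is bounded by a quantity depending only on $G$, $E/F$, $S_f$, and $\fp$, but not on $n$.

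The main obstacle I anticipate is bookkeeping the precise abelian obstruction: one must be careful that the relevant ``cocenter'' for a unitary group is $U(1)=U_{E/F}(1)$ (norm-one torus), so $D(F)\dom D(\BAFf)/D(\hat\MO_F)$ is a ray class group of $E$ relative to $F$, which is finite, and that the transition maps $D(\BAFf)\supset \mathrm{im}\,K_f(\fp^n)$ stabilize. Making the cohomological exact sequence argument clean — in particular checking that $\ker(H^1(F,G^{\mathrm{sc}})\to H^1(F,G))$ contributes only a finite, $n$-independent factor and that continuity/openness of all the maps is respected — is the technical heart. I would either cite the general principle (e.g.\ Borel's theorem on finiteness of class numbers for reductive groups, or the Platonov–Rapinchuk treatment of $\pi_0$ of adelic quotients) to get $n$-independence immediately, or spell out the torus computation above. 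Either way the statement reduces to: shrinking the level at a single finite place changes $\pi_0$ only through a quotient of a fixed finite abelian group, and finite abelian groups have boundedly many quotients.
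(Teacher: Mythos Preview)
Your outline matches the paper's: pass to the simply connected derived subgroup (the paper writes it as $G^1 = \ker(\det)$), invoke strong approximation there to make the fibers connected, and reduce $\pi_0(Y(\fp^n))$ to a double coset in the one-dimensional torus $D = U_{E/F}(1)$. But the step where you bound the torus quotient has a genuine gap. You assert that ``the image of $K_f(\fp^n)$ in $D(\BAFf)$ stabilizes for $n$ large,'' and justify this by saying its image has bounded index in the fixed finite class group $D(F)\dom D(\BAFf)/D(\hat\MO_F)$. That is the wrong quotient: what you need to bound is $D(F)\dom D(\BAFf)/\overline{\mathrm{im}\,K_f(\fp^n)}$, and the image of $K_{v_\fp}(\fp^n)$ under $\det$ at $v_\fp$ is the group of norm-one units congruent to $1$ modulo $\fp^n$, which shrinks without bound. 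So without further input this quotient genuinely grows.

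What rescues the statement is the quotient by $Z_G(\BAF)$, which you record in the definition of $Y(\fp^n)$ but then never use. The image $\det(Z_G(\BAF))$ in $D(\BAF) = \BAE^1$ is $(\BAE^1)^N$, and at the single place $v_\fp$ the subgroup $(E_{v_\fp}^1)^N$ already has finite index in $E_{v_\fp}^1$ independently of $n$. This fixed open subgroup absorbs the shrinking of $\det(K_{v_\fp}(\fp^n))$ and forces $|\pi_0(Y(\fp^n))|$ to be bounded. The paper's proof is exactly this computation. (A minor slip: $U(N)$ is reductive but not semisimple, so your parenthetical ``$G$ is semisimple'' is incorrect, though harmless since you immediately pass to $G^{\mathrm{der}}$.)
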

\begin{proof}
	We adapt an argument from \cite[\S 2]{MR15}. Considering~$G$ as a subgroup of~$GL_N/E$, let $\det: G \to U(1,E/F)$ be the determinant map and let $G^1 = \ker (\det)$. This map induces a fibering of $Y(\cp^n)$ over \[ U(1,F) \dom U(1,\BAF) / \det(Z(\BAF)K(\cp^n)).  \] The fibers are adelic double quotients for the group $G^1$, which is simply connected and has at least one noncompact factor at infinity. So by \cite[7.12]{PR94}, the group~$G^1$ satisfies strong approximation with respect to the set~$S_\infty$ and~$G^1(F) $ is dense in~$G^1(\BAFf)$, making the fibers connected. Thus we find that \[ \pi_0(Y(\cp^n)) \simeq U(1,F) \dom U(1,\BAF) / \det(Z(\BAF)K(\cp^n)) = E^1 \dom \BAE^1 / \det(Z(\BAF)K(\cp^n)). \] Now the image $\det (Z(\BAF)) $ is the subgroup $(\BAE^1)^N$ of $\BAE^1$. For each finite place $w$, the factor corresponding to $E_w$ in the quotient $\BAE^1/(\BAE^1)^N$ is a finite set. It follows that by increasing the level in powers of a single prime $\cp$, one can only produce a bounded number of components. 
\end{proof}

We now fix a unitary irreducible admissible representation $\pi_{\infty} = \ten_{v\mid \infty} \pi_v$ of $G_{\infty}$ with trivial central character and such that $\pi_v$ is the trivial representation if $G_v$ is compact. Denote \begin{equation}
	\label{definitionmPiInfty} m(\pi_{\infty}, \cp^n) := \dim \Hom_{G_\infty} (\pi_\infty,L^2(\Gamma(\cp^n) \dom G_\infty)).
\end{equation} Since $X(\cp^n)$ is one of the connected components of $Y(\cp^n)$, we have \begin{align}
\label{adelic multiplicity} 
m(\pi_\infty, \cp^n) &\leq  \dim\Hom_{G_\infty}(\pi_\infty,L^2(Y(\cp^n))) \\
&=  \sum_{\pi = \pi_\infty \ten \pi_f} m(\pi) \dim \pi_f^{K_f(\cp^n)}. \nonumber
\end{align}

 We will be interested in the asymptotics of the multiplicities $m(\pi_\infty, \cp^n)$ as $n \to \infty$.  
\subsection{Choice of Test Functions.} \label{Choice of Test Functions}
We define test functions whose traces will compute the multiplicity of archimedean representations at level~$\cp^n$. Recall that~$\mu_v$ denotes the Haar measure on~$G_v$. 
\begin{definition}
	At each finite place~$v$, let~$f_v(\cp^n):= 1_{K_v(\cp^n)}/\mu_v(K_v(\cp^n)).$
\end{definition}
\begin{definition}
	Let~$v \in S_0$ be an archimedean place such that~$G_v$ is compact. Let~$f_v(\cp^n)$ be equal to the constant function~$f_v = \mu_v(G_v)^{-1}.$
\end{definition}
The traces of these test functions count the dimension of spaces of~$K(\cp^n)$-fixed vectors. At~$v \in S_0$, they only detect the trivial representation and have vanishing trace on all other representations of~$G_v$. We want functions that play the same role at the non-compact archimedean places: they should detect representations~$\pi_v$ contained in a specific subset~$\Pi^0_{\psi_v} \subset \Pi_{\psi_v}$ and vanish on $\Pi_{\psi_v} \setminus \Pi^0_{\psi_v}$. The key is that we will only be working with Arthur packets attached to parameters~$\psi$ all having one specific~$\psi_\infty$. As such, the test function at an infinite place~$v$ only needs to isolate~$\pi_{v} \in \Pi^0_v$ from the other finitely many representations in the same packet. 
\begin{lemma}
	Let~$\psi_v$ be a local Arthur parameter with associated Arthur packet~$\Pi_{\psi_v}$. Fix a subset  $\Pi^0_{\psi_v} \subset \Pi_{\psi_v}$. Then there exists a function~$f_v^0 \in \mathscr{H}(G_v)$ such that \[ \tr \pi_v(f^0_v) = \begin{cases}
	1 &  \pi_v = \Pi^0_{\psi_v} \\ 
	0 & \text{otherwise,}
	\end{cases}\quad \quad  \pi_v \in \Pi_v. \]
\end{lemma}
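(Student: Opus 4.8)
The plan is to realize $f_v^0$ as a preimage under the trace map, exploiting the linear independence of the characters of the finitely many members of $\Pi_{\psi_v}$. Introduce the linear map
\[ T \colon \mathscr{H}(G_v) \longrightarrow \BC^{\,\Pi_{\psi_v}}, \qquad f \longmapsto \big(\tr \pi_v(f)\big)_{\pi_v \in \Pi_{\psi_v}}, \]
which is well defined because each $\pi_v \in \Pi_{\psi_v}$ is admissible, so that $\pi_v(f)$ has finite rank. It suffices to show that $T$ is surjective: any $f_v^0$ lying in the preimage of the vector $\big(\mathbf{1}_{\Pi^0_{\psi_v}}(\pi_v)\big)_{\pi_v \in \Pi_{\psi_v}}$ then satisfies the required identities, and no condition is imposed on $\tr\pi_v(f_v^0)$ for $\pi_v$ outside the packet. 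By finite-dimensional linear duality, surjectivity of $T$ is equivalent to the linear independence, as functionals on $\mathscr{H}(G_v)$, of the maps $f \mapsto \tr\pi_v(f)$, equivalently of the distribution characters of the (pairwise inequivalent, irreducible) representations $\pi_v \in \Pi_{\psi_v}$.

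When $F_v$ is non-archimedean, $\mathscr{H}(G_v)$ is the full space $C_c^\infty(G_v)$ of locally constant compactly supported functions, and linear independence of characters of pairwise inequivalent irreducible admissible representations is classical; it follows, for instance, from the fact that the characters are represented by distinct locally integrable functions, or from Kazhdan's density theorem. When $F_v$ is archimedean, $\mathscr{H}(G_v)$ is only the subalgebra of $K_v$-finite functions inside $C_c^\infty(G_v)$, and I would bridge the gap by a density argument: each character is an invariant eigendistribution in the sense of Harish-Chandra, hence continuous for the usual topology on $C_c^\infty(G_v)$, while the $K_v$-finite functions are dense in $C_c^\infty(G_v)$ (average an arbitrary $f \in C_c^\infty(G_v)$ over $K_v \times K_v$ against finite sums of matrix coefficients of $K_v$, producing $K_v$-bi-finite approximants with controlled support that converge to $f$ together with all derivatives). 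Consequently any linear relation among these characters holding on $\mathscr{H}(G_v)$ propagates to all of $C_c^\infty(G_v)$, where Harish-Chandra's linear independence of characters of inequivalent irreducibles forces it to be trivial. In either case $T$ is surjective, which proves the lemma.

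The argument is soft, so there is no genuinely hard step; the one point deserving attention — and what I would flag as the main, if minor, obstacle — is precisely this reduction from $C_c^\infty(G_v)$ to its $K_v$-finite part in the archimedean case, i.e.\ checking that restricting to $K_v$-finite test functions does not cost the ability to separate the finitely many packet members. One should also note, so that the statement is read with $\Pi_{\psi_v}$ as a set of isomorphism classes, that the local Arthur packets entering this paper (the Adams--Johnson packets occurring here, and their endoscopic transfers) are multiplicity-free, so the characters indexed by $\Pi_{\psi_v}$ are genuinely distinct and duality applies to the finite-dimensional space $\BC^{\,\Pi_{\psi_v}}$ as written.
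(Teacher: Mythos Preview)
Your proof is correct and follows the same idea as the paper: the paper's proof is a single sentence citing linear independence of characters for admissible representations, with a reference to Harish-Chandra in the archimedean case. You have simply filled in the details, including the passage from $C_c^\infty(G_v)$ to the $K_v$-finite Hecke algebra in the archimedean case, a point the paper leaves implicit.
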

\begin{proof}
	This follows directly from linear independence of characters for admissible representations. If~$v$ is archimedean this was proved by Harish-Chandra in~\cite{HC}. \end{proof}

\begin{definition}
	Let~$v$ be a non-compact archimedean place, let $\psi_v$ be an Arthur parameter and fix a subset $\Pi^0_{\psi_v} \subset \Pi_{\psi_{v}}$. Let $f_v(\cp^n) = f_{v}(\cp^n, \Pi_{\psi_v}^0)$ be the function $f^0_{v}$ described above.
\end{definition}

\begin{definition}\label{definition of test function}
	Let the function $f(\cp^n)$ be defined as $f(\cp^n) = \prod_vf_v(\cp^n_v)$. We will also denote $f_f(\cp^n) = \prod_{v \nmid \infty } f_v(\cp^n_v)$. 
\end{definition}

Given a choice of $\psi_\infty$ and $\Pi^0_{\psi_\infty}$, the function $f(\cp^n)$ satisfies the assumption of Theorem \ref{main theorem}: it is  factorizable and has nonnegative trace on $\pi \in \Pi_{\psi}$. 

\begin{prop} \label{Rdisc computes K-fixed vectors}
	Let $\psi \in \PsiG$. For each $v \in S_\infty \setminus S_0$, fix a subset $\Pi^0_{\psi_v}$ and a corresponding function $f(\cp^n) = f(\cp^n, \Pi^0_{\psi_v})$. Then we have  \[ \tr \Rdisci(f(\cp^n)) = \sum_{\pi} m(\pi) \dim \pi_f^{K_f(\cp^n)}  \] where the sum is taken over representations $\pi = (\ten_{v \mid \infty} \pi_v) \ten \pi_f \in \Pi_{\psi}$ such that for archimedean $v$, the representation $\pi_v$ is trivial if $v \in S_0$ and $\pi_v \in \Pi^0_v$ otherwise. 
\end{prop}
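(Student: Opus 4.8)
The plan is to deduce the statement directly from Theorem~\ref{Rdisc computes the traces on the packet.}, combined with the factorization of $f(\fp^n)$ and a place-by-place computation of the local traces $\tr\pi_v(f_v(\fp^n_v))$. The cited theorem applies because $\psi \in \PsiG$ is square-integrable: every elliptic parameter is, by the discussion in Remark~\ref{our restriction on the parameter}. Thus I would begin by writing
\[ \Rdisci(f(\fp^n)) = \sum_{\pi \in \Pi_{\psi}} m(\pi)\,\tr\pi(f(\fp^n)). \]
Since $f(\fp^n) = \prod_v f_v(\fp^n_v)$ is factorizable (Definition~\ref{definition of test function}) and every $\pi = \ten_v \pi_v \in \Pi_{\psi}$ has local components $\pi_v \in \Pi_{\psi_v}$, the trace factors as $\tr\pi(f(\fp^n)) = \prod_v \tr\pi_v(f_v(\fp^n_v))$, with all but finitely many factors equal to $1$ (at the unramified places, where $\pi_v$ is $K_v$-spherical and $f_v(\fp^n_v) = 1_{K_v}/\mu_v(K_v)$). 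It then remains to evaluate each local factor.

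At a finite place $v$ one has $f_v(\fp^n_v) = 1_{K_v(\fp^n)}/\mu_v(K_v(\fp^n))$, so $\pi_v(f_v(\fp^n_v))$ is exactly the projection of the space of $\pi_v$ onto its subspace of $K_v(\fp^n)$-fixed vectors; its trace is therefore $\dim \pi_v^{K_v(\fp^n)}$, which is finite by admissibility. Here one uses the Haar-measure normalization of Section~\ref{section on level structures} so that this is $\dim \pi_v^{K_v(\fp^n)}$ and not a volume multiple of it. Multiplying over all finite $v$ gives $\prod_{v\nmid\infty}\dim\pi_v^{K_v(\fp^n)} = \dim \pi_f^{K_f(\fp^n)}$, since $\pi_f = \ten_{v\nmid\infty}\pi_v$ and $K_f(\fp^n) = \prod_{v\nmid\infty} K_v(\fp^n)$.

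For the archimedean factors: if $v \in S_0$, then $G_v$ is compact and $f_v(\fp^n_v)$ is the normalized Haar measure $\mu_v(G_v)^{-1}$, so $\pi_v(f_v(\fp^n_v))$ is the projection onto $G_v$-invariants and $\tr\pi_v(f_v(\fp^n_v)) = \dim\pi_v^{G_v}$, equal to $1$ if $\pi_v$ is trivial and $0$ otherwise. If $v \in S_\infty \setminus S_0$, then $f_v(\fp^n_v) = f^0_v$ was chosen (via linear independence of characters, the lemma in Section~\ref{Choice of Test Functions}) so that $\tr\pi_v(f^0_v) = 1$ for $\pi_v \in \Pi^0_{\psi_v}$ and $\tr\pi_v(f^0_v) = 0$ for the remaining members of $\Pi_{\psi_v}$. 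Assembling the three cases, the summand indexed by $\pi = (\ten_{v\mid\infty}\pi_v)\ten\pi_f \in \Pi_{\psi}$ equals $m(\pi)\dim\pi_f^{K_f(\fp^n)}$ precisely when $\pi_v$ is trivial for every $v \in S_0$ and $\pi_v \in \Pi^0_{\psi_v}$ for every $v \in S_\infty \setminus S_0$, and vanishes otherwise; this is the asserted identity.

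There is no substantial obstacle here: the argument is an unwinding of Theorem~\ref{Rdisc computes the traces on the packet.} along the factorization of $f(\fp^n)$. The only points needing care are checking that the hypotheses of that theorem are met — square-integrability of $\psi$, which holds because $\psi \in \PsiG$ is elliptic — and the bookkeeping with the normalizations fixed in Sections~\ref{section on level structures} and~\ref{Choice of Test Functions}, which ensure that the local trace at a finite $v$ is exactly $\dim\pi_v^{K_v(\fp^n)}$ and that the archimedean factors isolate the prescribed representations.
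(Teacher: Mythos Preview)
Your proposal is correct and follows essentially the same approach as the paper: invoke Theorem~\ref{Rdisc computes the traces on the packet.}, factor the trace over places, and compute each local factor (finite places give $\dim\pi_v^{K_v(\fp^n)}$ via the normalized projector, $v\in S_0$ isolates the trivial representation, and $v\in S_\infty\setminus S_0$ isolates $\Pi^0_{\psi_v}$ by construction of $f^0_v$). Your write-up is in fact more explicit about the volume normalizations than the paper's own proof.
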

\begin{proof}
	As stated in Theorem \ref{Rdisc computes the traces on the packet.}, the  distribution $\tr \Rdisci(f)$ computes the sum of~$\tr \pi(f) = \prod_v \tr \pi_v(f_v)$ over all representations in the packet $\Pi_{\psi}$. At the finite places, the trace of convolution by the characteristic function of a compact open subgroup $K_v$ is equal to the product~$\mu_v(K_v) \cdot \dim \pi_v^{K_v}$. For archimedean places~$v \in S_0$, the representations~$\pi_v$ are finite-dimensional so the only representation with a~$K_v$-fixed vector is the trivial  representation. At~$v \in S_\infty \setminus S_0$,  the function~$f_{v}(\cp^n)$ was chosen precisely to detect~$\pi_v \in  \Pi^0_{\psi_v}$. 
\end{proof}

The key input allowing us to compare multiplicity growth on $G$ and $H \in \mE(H)$ is a fundamental lemma for congruence subgroups, proved by Ferrari~\cite{Fe}.

\begin{theorem}[Theorem 3.2.3, \cite{Fe}]\label{Ferrari}
	Let~$\cp$ be a prime of~$F$ with localization $F_{v_\cp}$ and residue field $k_\cp$. Let $\norm(\cp)$ be the cardinality of $k_\cp$ and let $p$ be its characteristic. Assume that~$p > N^2[F:\BQ]+1$. Let~$H \in \mE(G)$, and ~$d(G,H) = \frac{\dim G - \dim H}{2}$. Then the functions $$ f_{v_{\cp}}(\cp^n) = \frac{1_{K_{v_\cp}(\cp^n)}}{{\mu_{v_\cp}}(K_{v_\cp}(\cp^n))} \quad \text{and} \quad   f^H_{v_\cp}(\cp^n) = \norm(\cp)^{-n\cdot d(G,H)}\frac{1_{K_{v_\cp}(\cp^n)^H}}{{\mu_{v_\cp}} (K_{v_\cp}(\cp^n)^H)} $$ form a transfer pair. 
\end{theorem}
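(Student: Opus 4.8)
The plan is to transport the fundamental lemma from hyperspecial level down to the deep level $\fp^n$, passing to the Lie algebra via the exponential map and using the homogeneity of orbital integrals to reduce the level back to zero. First note that, since $v_\fp\notin S$, the group $G_{v_\fp}$ is isomorphic to the quasisplit $U(N)_{v_\fp}$, so at this place there is no inner-form subtlety; write $\MO_{v_\fp}$ for the ring of integers of $F_{v_\fp}$ and $\fg,\fh$ for the Lie algebras of $G_{v_\fp},H_{v_\fp}$, with their Chevalley lattices $\fg(\MO_{v_\fp}),\fh(\MO_{v_\fp})$. By definition, $\big(f_{v_\fp}(\fp^n),f^H_{v_\fp}(\fp^n)\big)$ form a transfer pair exactly when, for every strongly $G$-regular semisimple $\delta\in H(F_{v_\fp})$ with matching conjugacy class $\gamma\subset G(F_{v_\fp})$, the stable orbital integral of $f^H_{v_\fp}(\fp^n)$ at $\delta$ equals $\sum_\gamma\Delta(\delta,\gamma)\,O_\gamma\big(f_{v_\fp}(\fp^n)\big)$, where $\Delta$ is the Langlands--Shelstad transfer factor normalized (via the hyperspecial $K_{v_\fp}$) as in \cite{KMSW} and $\gamma$ runs over the transfer of $\delta$. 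For $n\geq 1$ both test functions are supported in a pro-$p$ congruence subgroup, so only $\gamma,\delta$ conjugate into $K_{v_\fp}(\fp^n)$, resp.\ its analogue on $H$, contribute, and such elements are automatically topologically unipotent.

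Under the hypothesis $p>9[F:\BQ]+1$ one can fix a Cayley transform (or exponential map) $\fp^n\fg(\MO_{v_\fp})\xrightarrow{\ \sim\ }K_{v_\fp}(\fp^n)$ which is $\mathrm{Ad}$-equivariant, measure-preserving up to a $p$-adic unit, and compatible with the passage to $H$, with the endoscopic matching of regular semisimple classes, and with transfer factors; the explicit bound on $p$ records the effective range in which this map is defined on $\fp\fg(\MO_{v_\fp})$, uniformly over the endoscopic groups $U(N_1)\times U(N_2)$ occurring here. Transporting the identity through this map replaces it by its Lie-algebra analogue for the characteristic functions of $\fp^n\fg(\MO_{v_\fp})$ and $\fp^n\fh(\MO_{v_\fp})$. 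On the Lie algebra one uses homogeneity: multiplication by a uniformizer $\varpi^n$ carries $\fp^n\fg(\MO_{v_\fp})$ onto $\fg(\MO_{v_\fp})$ and, since it does not change the centralizer torus and hence the quotient measure on the orbit, a change of variables gives $O_{\varpi^nX}\big(1_{\fp^n\fg(\MO_{v_\fp})}\big)=O_X\big(1_{\fg(\MO_{v_\fp})}\big)$, and likewise on $\fh$ and for the stable versions; because scaling by the central $\varpi^n$ preserves the matching $X\leftrightarrow Y$, the level-$\fp^n$ identity becomes the fundamental lemma for the Lie algebras $\fg,\fh$ at hyperspecial level. The latter is the Lie-algebra fundamental lemma, which for unitary groups is the theorem of Laumon--Ng\^o \cite{LN} (Ng\^o \cite{Ngo} in general) and from which Theorem~\ref{Fundamental Lemma} is deduced through the reductions of Waldspurger \cite{Wa06,Wa08}.

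It remains to verify that the scalar produced by this transport is exactly $\norm(\fp)^{-n\,d(G,H)}$, and this is where the real work lies. The powers of $\norm(\fp)$ have three sources: the volume normalizations built into $f_{v_\fp}(\fp^n)$ and $f^H_{v_\fp}(\fp^n)$, with $\mu_{v_\fp}\big(K_{v_\fp}(\fp^n)\big)\asymp\norm(\fp)^{-n\dim G}$ against $\mu_{v_\fp}\big(K^H_{v_\fp}(\fp^n)\big)\asymp\norm(\fp)^{-n\dim H}$; the discriminant component $\Delta_{\mathrm{IV}}(\delta,\gamma)=\big|D_G(\gamma)/D_H(\delta)\big|^{1/2}$ of the transfer factor, which scales by a power of $\norm(\fp)$ under $X\mapsto\varpi^nX$; and the unit Jacobian of the Cayley transform together with the constants relating the group and Lie-algebra measure normalizations. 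The point that makes all of these collapse into one exponent is that $G$ and $H$ share a maximal torus, so that $\dim G-\dim H$ equals the difference between the numbers of roots; this is what yields the clean factor $d(G,H)=(\dim G-\dim H)/2$, with no residual factor left over (the remaining components of $\Delta$ and the parity of $\dim G-\dim H$ being arranged so that no character of $F_{v_\fp}^\times$ survives). The main obstacle, then, is not conceptual but is precisely this bookkeeping --- the careful accounting of Haar measures on $G_{v_\fp}$, $H_{v_\fp}$ and their common maximal torus, of the transfer-factor normalization of \cite{KMSW}, and of the Jacobian of the Cayley transform --- together with the verification that $p>9[F:\BQ]+1$ really suffices for the exponential map and the attendant homogeneity; this is what Ferrari carries out in \cite{Fe}.
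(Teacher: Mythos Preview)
The paper does not prove this statement at all: it is quoted verbatim as Theorem~3.2.3 of Ferrari~\cite{Fe} and used as a black box, so there is no ``paper's own proof'' to compare against. Your proposal is therefore not a reconstruction of anything in this paper but rather a sketch of Ferrari's argument itself.

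As such a sketch, your outline is broadly faithful to the strategy in~\cite{Fe}: pass to the Lie algebra by an exponential-type map valid on the pro-$p$ congruence subgroup, use homogeneity under scaling by $\varpi^n$ to reduce the level-$\fp^n$ transfer identity to the level-$0$ Lie-algebra fundamental lemma of Laumon--Ng\^o, and track the scalar $\norm(\fp)^{-n\cdot d(G,H)}$ through the volume normalizations and the $\Delta_{\mathrm{IV}}$ factor. That said, several of your claims are asserted rather than argued --- that the Cayley transform is compatible with endoscopic matching and with the full Langlands--Shelstad transfer factor (not just $\Delta_{\mathrm{IV}}$), that the bound $p>9[F:\BQ]+1$ is exactly what is needed, and that the non-$\Delta_{\mathrm{IV}}$ components of $\Delta$ and the measure Jacobians contribute trivially --- and these are precisely the delicate points Ferrari has to establish. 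If you intend this as an exposition rather than a citation, those compatibilities need references or arguments; if you intend it as the paper does, a one-line citation to~\cite{Fe} suffices.
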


\subsection{Adaptation of Previous Limit Multiplicity Results.} \label{section on others' limit multiplicity results} Here, we collect all the results so far and import known upper bounds from the literature to prove our main limit multiplicity results. We start with a discussion of central characters. 

\subsubsection{Central Character Data.}  Our initial discussion of the discrete spectrum in~\ref{subsection on automorphic representations} included the prescription of a subgroup~$\fX \subset Z_G(\BAF)$, and we recalled in Proposition~\ref{proposition on central character} that in the case where~$G = U_{E/F}(N)$, the central character of representations in a packet $\Pi_{\psi}$ is determined explicitly in terms of $\psi$. We now need to extend this discussion to central characters of~$H \in \mE(G)$. For this, we will denote~$\fX_G = Z_G(\BAF)$, and~$\fX_H = Z_H(\BAF)$. As described in \cite[\S 3.2]{A}, the group~$\fX_G$ can be viewed canonically as a subgroup of~$\fX_H$, and we can speak of~$(\fX_G, \omega)$ as a central character datum of $H$, though it is not properly speaking a character~$Z_H(\BAF)$. This can be extended inductively to $H \in \mE\mH(G)$. From Proposition~\ref{proposition on central character} and the definition of the embeddings in~\eqref{embedding of endoscopic groups}, we get the following.
\begin{lemma} \label{lemma on restriction of central characters}
Let $G$ be a unitary group and $(H, \xi_H) \in \mE(G)$. Let $\psi \in \PsiG$ be associated to the central character datum $(\fX_G, \omega)$, and be such that~$\psi = \xi_H \circ \psi^H$.  Then the central character datum~$(\fX_G, \omega')$ associated to~$\psi^H$ is determined by~$\omega$ and~$\xi_H$.
\end{lemma}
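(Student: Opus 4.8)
The plan is to reduce the statement to an explicit computation combining Proposition~\ref{proposition on central character} with the formulas \eqref{the choice of bas change morphism} and \eqref{embedding of endoscopic groups} for the embeddings $\emb$ and $\eta_{\underline{\kappa}}$. First I would unwind what is being claimed. The inner twisting identifies the centre of $G$ with that of its quasisplit form over $F$, so I may take $G = U(N)$; and for $H = U(N_1)\times\cdots\times U(N_r)$ the centre $Z_H$ is the product of the scalar subtori of the blocks, each canonically isomorphic to $Z_G$, so that $\fX_G = Z_G(\BAF)$ sits inside $\fX_H = Z_H(\BAF)$ as the diagonal --- this is the canonical inclusion of \cite[\S 3.2]{A}. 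Thus ``the central character datum $(\fX_G,\omega')$ attached to $\psi^H$'' means that $\omega'$ is the restriction along this diagonal of the central character of a member of the $H$-packet $\Pi_{\psi^H}$, and the first point to settle is that this restriction does not depend on the member chosen. That follows from Proposition~\ref{proposition on central character} applied to each quasisplit factor of $H$: it expresses the central character of any $\pi^H \in \Pi_{\psi^H}$ in terms of $\phi_{\psi^H}$ alone, via the composite $L$-embedding $\emb\circ\eta_{\underline{\kappa}}\colon {}^L H \to {}^L\GN$ of \eqref{eqn embedding into GLN} followed by the determinant map.

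Next I would compare $\omega'$ with $\omega$. Since $\psi = \eta_{\underline{\kappa}}\circ\psi^H$ and the assignment $\psi\mapsto\phi_\psi$ commutes with $L$-homomorphisms, we get $\phi_\psi = \eta_{\underline{\kappa}}\circ\phi_{\psi^H}$; feeding this into Proposition~\ref{proposition on central character} for $G$ shows that $\omega$ has Langlands parameter $(\det\rtimes\id)\circ\emb\circ\eta_{\underline{\kappa}}\circ\phi_{\psi^H}$. The Langlands parameter of $\omega'$, on the other hand, is obtained from the (tuple-valued) parameter of the central character of $\pi^H$ by composing with the $L$-map dual to the diagonal $\fX_G\hookrightarrow\fX_H$, i.e. multiplication on the $\BC^\times$-coordinates. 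Comparing the two: on the $\hat H$-part they agree outright, because $\det(\text{diag}(g_1,\dots,g_r)) = \prod_i \det g_i$; on the Weil-group part, inspecting the images of $\sigma\in W_E$ and of $w_c$ in \eqref{the choice of bas change morphism} and \eqref{embedding of endoscopic groups} shows that the two differ only by a fixed character of $\fX_G$ manufactured from $\chi_\kappa$ and $N$. Hence $\omega'$ is $\omega$ multiplied by an explicit character that depends only on $\kappa$ (hence on $\emb$) and on the block structure recorded by $\eta_{\underline{\kappa}}$, which is precisely the assertion. The extension to hyperendoscopic chains $H$ is then obtained by iterating this one step.

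The only part that calls for care --- and where I expect whatever mild difficulty there is --- is the second step: pinning down the dual of the inclusion $\fX_G\hookrightarrow\fX_H$ and matching the ``determinant of the $GL_N$-block'' through the two embeddings, so that the $\chi_\kappa$-twist lands exactly where claimed and no residual dependence on the individual $N_i$ survives. Everything else is formal bookkeeping with Proposition~\ref{proposition on central character} and the explicit formulas \eqref{the choice of bas change morphism}, \eqref{embedding of endoscopic groups}.
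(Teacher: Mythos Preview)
Your proposal is correct and follows essentially the same approach as the paper, which does not give a separate proof but simply states that the lemma follows ``from Proposition~\ref{proposition on central character} and the definition of the embeddings in~\eqref{embedding of endoscopic groups}.'' Your write-up spells out exactly this: apply Proposition~\ref{proposition on central character} factorwise to $H$, use $\phi_\psi = \eta_{\underline{\kappa}}\circ\phi_{\psi^H}$, and compare determinants through the explicit formulas \eqref{the choice of bas change morphism} and \eqref{embedding of endoscopic groups}, which is considerably more detail than the paper itself provides.
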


\subsubsection{Sets of Parameters}
\begin{definition}
	Let $G$ be a reductive group, $\psi_{\infty}$ a parameter of $G_\infty$, and $(\mathfrak{X}, \omega)$ a central character. We denote by $\Psi(G,\psi_{\infty}, \omega)$ the set of $\psi \in \Psi(G)$ such that $(\psi)_\infty = \psi_\infty$ and such that the associated representations in the packet of $\psi$ have central character $\omega$.
\end{definition}
\begin{definition}
Letting $G$ be as above, and $(H, \xi) \in \mE(G)$ or $H = G$, we define for $ f \in \mathscr{H}(H)$,  \begin{equation} \label{EqDefIDiscPiInf}
	I^H_{\text{disc},\psi_\infty, \omega}(f) = \sum_{\psi \in \Psi(G, \psi_\infty, \omega)} I^H_{\text{disc}, \psi}(f)
\end{equation}  
\end{definition}

We will need to rewrite the right-hand side of \eqref{EqDefIDiscPiInf} as a sum over parameters of $H$. By definition, for each $\psi \in \Psi(G,\eta)$, we have \[ I^H_{\text{disc}, \psi}(f) = \sum_{c^H \stackrel{\eta \circ \xi}{\to}c(\psi)} I^H_{\text{disc},c^H}(f), \] for $c^H \in \mathcal{C}(H)$. But following the main theorem of the spectral expansion of the trace formula \cite[Prop. 4.3.4]{Mok} applied to each of the simple factors of $H$, shows that $I^H_{\text{disc},c^H}(f) = 0$ unless $c^H = c(\psi^{H,N})$ is attached to a parameter $\psi^H = (\tilde \psi^H, \psi^{N,H})$. By the assumption $c^H \stackrel{\eta \circ \xi}{\to}c(\psi)$, we mush have $\psi^{H,N} = \psi^N$, and $\tilde\psi$ must factor through $^LH$. Thus we can rewrite

\[ \sum_{\psi \in \Psi(G, \psi_\infty, \omega)} I^H_{\text{disc}, \psi}(f) =  \sum_{\psi  \in \Psi(G, \psi_\infty, \omega)} \sum_{\psi^H \mapsto \psi }I^H_{\text{disc},\psi^H,\omega}(f).\]

In the arguments of this section, we will work with three families of groups, and three sets of parameters, which we describe now. 
\begin{itemize}
	\item[-] The group~$G \in \tilde{\mE}_{sim}(N)$ is the inner form of~$U_{E/F}(N)$ for which we ultimately want to produce bounds. These will be obtained in Theorem~\ref{main theorem 2} by taking a sum over~$\Psi(G, \psi_\infty, 1)$, for the trivial central character of~$Z_G$. 
	\item[-] The group~$H = H_\psi = U(N_1) \times U(N_2)$ with~$N_1 + N_2 = N$ belongs to the endoscopic datum~$(H, \xi, s) \in \mE(G)$ whose stable trace gives the upper bounds in Theorem~\ref{main theorem}. In the lead-up to Theorem \ref{main theorem}, for each~$\psi \in \PsiG$, we singled out a parameter~$\psi^{H}$ such that~$\xi \circ \psi^H = \psi$. Through this choice, the parameter~$\psi_{\infty}$ of~$G_\infty$ determines a unique parameter~$\psi^H_\infty$ of~$H_\infty$. In Proposition~\ref{prop on hyperendoscopic upper bounds}, the sum will be taken over the set~$\Psi(H, \psi^H_\infty, \omega)$ for a suitable central character~$\omega$.
	\item[-] The difference between the distribution giving the upper bounds in Theorem \ref{main theorem} and $I^H_{\text{disc},\psi^H}$ is expressed in Corollary \ref{corollary on hyperendoscopy S(psi,sH,f)} in terms of hyperendoscopic data $(H_q,\xi_q)$ for $H$. We will consider parameters such that $\xi_{q}(\psi^{H_q})  \in \Pi(H, \psi^H_\infty, \omega)$. We have $\psi^{H_q}_\infty = \psi^{H_q^1}_\infty \times \psi^{H_q^2}_\infty$ and we will give upper bounds on the multiplicities of representations associated to each of these factors in Propositions \ref{Proposition Savin with Central Character} and \ref{proposition growth of characters}. 
\end{itemize}

\subsubsection{Upper Bounds for Hyperendoscopic Groups.}
We start by adapting limit multiplicity results of Savin~\cite{Sa89}, which will form the basis for our inductive proof. Since Savin's result applies to semisimple groups, we pay attention to the central characters and components of locally symmetric spaces. We first give bounds for bounded parameters. The result is stated in terms of any $G$ and $H$, but will be specialized to $G = U(N_2)$ and $H = H^2_q$. 

\begin{proposition} \label{Proposition Savin with Central Character}

	Let $G \in \tilde{\mE}_{sim}(N)$ and $\mH \in \mH\mE(G)$ be a hyperendoscopic group. Let~$\psi_{\infty} \in \Psi(G_\infty)$ be a bounded parameter with regular infinitesimal character. Let~$(\fX_G, \omega)$ be a central character datum for $G$ such that~$\omega \mid_{(\fX_G \cap G_\infty)}$ is the central character associated to~$\psi_\infty$ by Proposition \ref{proposition on central character}. Let $v_\cp$ be an unramified finite place of $F$, associated to the prime $\cp$, and let $f(\cp^n) = \prod_v f_v(\cp^n) \in \mathscr{H}(H)$ be such that \begin{itemize}
		\item $f_v(\cp^n)$ is independent of $n$ if $v \neq v_\cp$
		\item $f_{v_\cp}(\cp^n) =  \frac{1_{K(\cp^n)}}{\mu(K(\cp^n))}$, for $K(\cp^n)$ as in Section \ref{section on level structures}. 
		\item $f(\cp^n)$ satisfy the assumptions of Theorem \ref{main theorem}
	\end{itemize} Then \[|I^H_{\text{disc}, \psi_{\infty}, \omega}(f(\cp^n))| \ll Nm(\cp^n)^{\dim H-1}.\] 
\end{proposition}
\begin{proof} Since the infinitesimal character of $\psi_\infty$ is regular, we can equate 
	
	\[ I^H_{\text{disc},\psi_\infty, \omega} (f(\cp^n)) = \tr R^H_{\text{disc}, \psi_\infty, \omega }(f(\cp^n)) = \sum_{\psi^H_\infty \mapsto \psi_\infty} \sum_{\psi^H \in \Psi(H, \psi^H_\infty, \omega')} \sum_{\pi \in \Pi_{\psi^H}} m(\pi) \tr \pi (f(\cp^n)), \] where $(\mathfrak{X}, \omega')$ is the central character datum associated to $\psi$ as in Lemma \ref{lemma on restriction of central characters}. Since the first sum is finite, we will bound  \[  \left|\sum_{\psi^H \in \Psi(H, \psi^H_\infty, \omega')} \sum_{\pi \in \Pi_{\psi^H}} m(\pi) \tr \pi (f(\cp^n))\right|. \]

	For each $\pi$, we have $ \tr \pi (f(\cp^n)) = \prod_v \tr \pi_v(f_v(\cp^n))$.  At $v \mid \infty$, the packet is always the same, so~$|\tr \pi_\infty(f_\infty)|$ is uniformly bounded. For each finite~$v$, there is an open compact subgroup~$K'_v \subset G_v$, depending on~$n$ only if~$v = v_\cp$, such that~$|\tr\pi_v f_v(\cp^n)| \neq 0 \implies \dim \pi_v^{K'_v} \neq 0$. Indeed, since $f_v(\cp^n)$ is~$K_v$-finite, where~$K_v$ is a maximal compact subgroup, there is a subgroup~$K'_v \leq K_v$ of finite index such that~$f_v(\cp^n)$ is~$K'_v$-invariant, so that convolution by $f_v$ is a projection onto $\pi_v^{K'_v}$. At all but finitely many places the group~$K_v$ is hyperspecial, we have~$f_v(\cp^n) = 1_{K_v}$ and we can take~$K'_v = K_v$. Thus we have $|\tr \pi_vf_v|<C(f_v)\dim \pi^{K_v'}$ and by Bernstein's uniform admissibility~\cite{Ber74all}, the right-hand side is bounded uniformly, with~$C(f_v(\cp^n)) = 1$ at~$v \notin S$.
At $v = v_p$, we have $K_v' = K_v(\cp^n)$. Let $K'(\cp^n) = \prod_{v < \infty} K'_v$. From our restriction on the central character, we thus have \[ \left|\sum_{\Psi(H, \psi^H_\infty, \omega')} \sum_{\pi \in \Pi_{\psi^H}} m(\pi) \tr \pi (f(\cp^n))\right| \leq C(\psi_\infty, S) \sum_{\substack{\pi : \pi_\infty \in \Pi_{\psi^H_\infty}\\ \omega(\pi) = \omega'}} m(\pi) \dim \pi_f^{K'(\cp^n)}, \] where $C(\psi_\infty, S)$ is a constant depending only on $\psi_\infty$ and $S$.  

Since $\psi_\infty^H$ is bounded, any representation $\pi_\infty \in \Pi_{\psi^H_\infty}$ is tempered, which implies that $\pi \in \Pi_\psi^H$ occur in the cuspidal part of the discrete spectrum~\cite[Theorem 4.3]{Wa84}.
Thus for each $\pi_\infty \in \Pi_{\psi^H_{\infty}}$ we have  \begin{equation*} \sum_{\substack{\pi = \pi_\infty\cdot \pi_f\\ \omega(\pi) = \omega'}} m(\pi) \dim \pi_f^{K'(\cp^n)} \leq \dim \Hom_{H_\infty}(\pi_{\infty}, L^2_{\text{cusp}}(H(F)\dom H(\BAF),\omega')^{K'(\cp^n)}) \end{equation*}

 The right-hand side of the inequality is equal to \begin{equation}\label{eqn central character} m(\pi_{\infty}, \cp^n, \omega') := \dim\Hom_{H_\infty}(\pi_\infty, L^2_{\text{cusp}}(H(F)\dom H(\BAF)/K'(\cp^n), \omega')). \end{equation}
The space~$Y^*_H(\cp^n) := H(F) \dom H(\BAF)/K'(\cp^n)$ carries commuting actions of~$H_\infty$ and~$Z_H(\BAF)$, inducing representations on $L^2_{\text{cusp}}(Y^*_H(\cp^n)).$ For $n$ large enough, the character $\omega'$ is trivial on~$\fX_G\cap K'(\cp^n)$, and thus appears in the representation of~$\fX_G$ on~$L^2_\text{cusp}(Y^*_H(\cp^n))$. It is this $\omega'$-isotypic subspace that we denote by~$L^2_{\text{cusp}}(Y^*_H(\cp^n), \omega')$. 

To bound~$m(\pi_\infty, \cp^n, \omega')$, consider first the case where the central character datum for ~$\fX_H$ is trivial: this setup is similar to that of Proposition \ref{proposition on the set of components}.  We have~$H = U(N_1) \times ... \times U(N_r)$; let $H^1 = SU(N_1) \times ... \times SU(N_r)$. The representation~$\pi_{\infty}$ of~$H_\infty$ restricts to an irreducible representation~$\rho_\infty$ of~$H^1_\infty$, see \cite[\S 2]{AP06certain}. Let \[X_H(\cp^n) = H^1(F) \dom H^1(\BA)/ K^1(\cp^n), \quad K^1(\cp^n) = K'(\cp^n) \cap G^1(\BA).\]  
The group~$H^1$ is simply connected and has no compact factors at infinity, so~$X_H(\cp^n)$ is connected~\cite{PR94}.  Following a result of Savin~\cite{Sa89}, we have \[m(\rho_\infty, \cp^n):=\dim \Hom_{H^1_\infty}(\rho_\infty, L^2_{\text{cusp}}(X_H(\cp^n))) \asymp \Vol(X_H(\cp^n)) \asymp Nm(\cp)^{n \cdot \dim H^1}.\] We now consider general central characters. The space~$Y^*_H(\cp^n)$ is a disjoint union of finitely many locally symmetric spaces, associated to conjugates of~$K^1(\cp^n)$, and the theorem of Savin applies to each of them. Let~$T = H/H^1$, and let~$\nu$ denote the quotient map, through which all central characters factor, c.f. Proposition~\ref{proposition on central character}. Following \cite[2.7.1]{De71}, the set~$\pi_0(Y^*_H(\cp^n))$ is a torsor for the finite group \[  T_{\cp^n}:=T(\BAF)/T(F)\nu(K'(\cp^n)). \] Denote by $\fX_{G,\cp^n}$ the image of~$\fX_G$ in this quotient. The action of~$\fX_{G,\cp^n}$ on~$\pi_0(Y^H(\cp^n))$ is induced by multiplication in $T_{\cp^n}$, thus~$\pi_0(Y^*_H(\cp^n))$ is a finite union of $[T_{\cp^n}:\fX_{G,\cp^n}]$ many principal homogeneous spaces for~$\fX_{G,\cp^n}$. Thus as a~$\fX_{G,\cp^n}$-representation, the space $\Hom(\pi_\infty, L^2_{\text{cusp}}(Y^*_H(\cp^n)))$ caries finitely many copies of the regular representation of~$\fX_{G,\cp^n}$, and all characters of~$\fX_G$ factoring through $\fX_{G,\cp^n}$ occur with equal multiplicity. The group~$\fX_G$ is the adelic points of torus of diagonal matrices isomorphic to $U(1)$, and~$T \simeq U(1)^r$. So each character~$\omega'$ of~$\fX_G$ factoring through~$\fX_{G,\cp^n}$ does so with multiplicity \[ m(\pi_\infty, \cp^n, \omega') = m(\rho_\infty, \cp^n) [T_{\cp^n} : T^H_{\cp^n}] = m(\rho_\infty, \cp^n)\frac{[T(1):T(\cp^n)]}{[\fX_{G,1}:\fX_{G,\cp^n}]} \asymp  \norm(\cp^n)^{\dim H -1}. \] Summing over all~$\pi_\infty$ in $\Pi_{\psi_{\infty}}$, we conclude.  \end{proof}

We now give bounds for parameters where $\psi(SL_2)$ is maximally large. In the final proof, the group $G$ will be specialized to $U(N_1)$.  

\begin{proposition} \label{proposition growth of characters}
	Let~$G \in \tilde{\mE}_{sim}(N)$, and~$\psi_\infty \in \Psi(G_\infty)$. Let $f(\cp^n) = \prod_{v} f_v(\cp^n)$ satisfy the same assumptions as in Proposition \ref{Proposition Savin with Central Character}.  Let~$\psi_{\infty} \in \Psi(G_\infty)$ be a parameter with regular infinitesimal character and such that~$\psi_{\infty}\mid_{SL_2} = \nu(N)$. Let~$(\fX_G, \omega)$ be a central character datum. Then there is a constant~$M$ depending only on~$G, \psi_\infty$, and the set~$S$ of bad places (and in particular neither on $n$ nor on $\omega$) such that   \[ |I^G_{\text{disc}, \psi_{\infty}, \omega}(f(\cp^n))| < M.
\]
\end{proposition}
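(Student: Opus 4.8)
The plan is to exploit that the Arthur $SL_2$ of $\psi_\infty$ is as large as possible, $\psi_\infty|_{SL_2}=\nu(N)$, which forces every relevant global parameter to be simple with a one-dimensional associated automorphic representation, and then to bound the number of such parameters using that the central character $\omega$ is fixed. First I would note that the Arthur $SL_2$-type of a global parameter is determined by any of its localizations (as used throughout \S\ref{Bounds by the Dominang Group}), so the hypothesis $\psi_\infty|_{SL_2}=\nu(N)$ forces any $\psi=\boxplus_i\mu_i\boxtimes\nu(m_i)\in\Psi(G,\psi_\infty,\omega)$ to have a single block with $m_i=N$, hence $n_i=1$. Thus $\psi=\mu\boxtimes\nu(N)$ is simple, for a conjugate self-dual Hecke character $\mu$ of $\BAE^\times$ whose archimedean component is pinned down by $\chi_\infty$. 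For such $\psi$ one has $\Spsi=\{1\}$ by \eqref{size of S_psi}, so every local packet and the global packet $\Pi_\psi$ are singletons, $m(\pi)=1$, and the unique $\pi=\pi_\infty\ten\pi_f\in\Pi_\psi$ is one-dimensional, of the form $\chi\circ\det$ for a Hecke character $\chi$ of $E^1\dom\BAE^1$; by Proposition~\ref{proposition on central character} its central character is $\chi^N=\omega$. In particular $\pi_\infty$ is the single element of the fixed packet $\Pi_{\psi_\infty}$, the same for every $\psi$ in the sum.

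Next, since $\psi_\infty$ has regular infinitesimal character, Theorem~\ref{Bergeron-Clozel on parameters with reg inf char} and Theorem~\ref{Rdisc computes the traces on the packet.} give $I^G_{\text{disc},\psi}(f(\fp^n))=R^G_{\text{disc},\psi}(f(\fp^n))=m(\pi)\,\tr\pi(f(\fp^n))$, and I would estimate $\tr\pi(f(\fp^n))=\prod_v\tr\pi_v(f_v(\fp^n))$ place by place, exactly as in the proof of Proposition~\ref{Proposition Savin with Central Character}. At the archimedean places $f_\infty$ and $\pi_\infty$ are both fixed, so that factor is a constant depending only on $\psi_\infty$ and $f_\infty$; at the finite places $v\neq v_\fp$ the $f_v$ are fixed, so $|\tr\pi_v(f_v)|$ is bounded uniformly; and at $v=v_\fp$ the function $1_{K_{v_\fp}(\fp^n)}/\mu_{v_\fp}(K_{v_\fp}(\fp^n))$ acts as the projection onto $\pi_{v_\fp}^{K_{v_\fp}(\fp^n)}$, of dimension at most $1$ because $\pi_{v_\fp}$ is one-dimensional. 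Hence $|\tr\pi(f(\fp^n))|\le C(G,\psi_\infty,S)$, uniformly in $n$, $\omega$, and $\psi$.

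Finally I would count the parameters $\psi$ occurring in the sum: each contributes one representation $\pi=\chi\circ\det$, and the equation $\chi^N=\omega$ determines $\chi$ up to multiplication by an $N$-torsion character of $E^1\dom\BAE^1$. The group of such characters is finite, because $\BAE^1/(E^1\cdot(\BAE^1)^N)$ is finite --- this is precisely the finiteness exploited in the proof of Proposition~\ref{proposition on the set of components} --- and its order $M_0$ depends only on $E$ and $N$. Summing the at most $M_0$ contributing terms gives $|I^G_{\text{disc},\psi_\infty,\omega}(f(\fp^n))|\le M_0\cdot C(G,\psi_\infty,S)=:M$, as required.

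The only real content is the interplay of the two hypotheses: the maximality $\psi_\infty|_{SL_2}=\nu(N)$ is what makes the $\pi$ one-dimensional, so that the dangerous factor $\tr\pi_{v_\fp}(f_{v_\fp}(\fp^n))$ is bounded by $1$ rather than growing like $\norm(\fp^n)^{\dim G-1}$, while fixing the central character $\omega$ is what keeps the number of contributing $\chi$ bounded --- without it this count would itself grow like $\norm(\fp^n)$. I expect the point needing the most care to be the verification that for the $G$ at hand the members of $\Pi_\psi$ really are the expected one-dimensional representations and that $\Spsi$ is trivial, which is where the explicit description of packets for the parameters $\mu\boxtimes\nu(N)$ together with Proposition~\ref{proposition on central character} enter.
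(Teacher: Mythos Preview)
Your argument is correct and reaches the same conclusion as the paper, but the organization is genuinely different. The paper's proof, like that of Proposition~\ref{Proposition Savin with Central Character}, first bounds the sum by the multiplicity $m(\pi_\infty,\fp^n,\omega)$ of one-dimensional representations with fixed $\pi_\infty$ and level $K'(\fp^n)$; it then treats the case $\omega=1$ directly via Proposition~\ref{proposition on the set of components}, and passes to general $\omega$ by invoking the ``regular representation of $\fX_{G,\fp^n}$'' argument from the proof of Proposition~\ref{Proposition Savin with Central Character}, which shows that all central characters of a given level occur with equal multiplicity. You instead work on the parameter side: you identify each contributing $\psi$ explicitly as $\mu\boxtimes\nu(N)$, pin down the associated automorphic character $\chi$ by the equation $\chi^N=\omega$, and bound the number of solutions by the number of $N$-torsion Hecke characters of $E^1\backslash\BAE^1$. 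Your route is more concrete and avoids the detour through the previous proposition's equi-multiplicity argument; the paper's route is more uniform with the surrounding proofs and needs less about the precise structure of the packets $\Pi_{\psi_v}$ at finite places.

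One point deserves a sharper justification. Your claim that the $N$-torsion characters of $E^1\backslash\BAE^1$ form a finite group is true, but Proposition~\ref{proposition on the set of components} does not literally prove it: that proposition only shows that $E^1\backslash\BAE^1/\bigl((\BAE^1)^N\cdot\det K(\fp^n)\bigr)$ stays bounded as $n$ varies, not that $E^1\backslash\BAE^1/(\BAE^1)^N$ is finite. You can close this either by observing that the $\chi$'s actually contributing must be trivial on $\det K'(\fp^n)$ (since otherwise $\tr\pi(f(\fp^n))=0$), so you are really counting $N$-torsion characters of the finite quotient in Proposition~\ref{proposition on the set of components}; or by noting independently that any $N$-torsion Hecke character of $U(1)$ has conductor bounded in terms of $N$ and $E/F$, whence the global count is finite. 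Either fix is routine, and with it your argument is complete.
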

\begin{proof}
The proof is a simplified version of that of Proposition \ref{Proposition Savin with Central Character}. The restriction of the infinitesimal character and on the possible representations at infinity gives \[ |I^G_{\text{disc}, \psi_{\infty}, \omega}(f(\cp^n))| \leq C(\psi_{\infty}, S) \sum_{\substack{\pi : \pi_\infty \in \Pi_{\psi_\infty}\\ \omega(\pi) = \omega}} m(\pi) \dim \pi_f^{K'(\cp^n)}. \] The assumption on the Arthur $SL_2$ implies that the representations $\pi_{\infty} \in \Pi_{\psi_\infty}$ are one-dimensional, see e.g. \cite[\S 5]{AUC}. Thus they factor through the determinant map~$\nu$, and, as above, through the action of the quotient~$T(\BAF)/T(F)$. It follows that in this case the multiplicity~$m(\pi_\infty, \cp^n, \chi)$ is bounded above by~$|T(\BAF)/T(F)\nu(K(\cp^n))|$. Recall here that $\fX_G = Z(\BAF)$. If~$\omega$ were trivial, then the representations contributing to~$m(\pi_{\infty}, \cp^n, \omega)$ would be bounded above by the size of the quotient $|T(\BAF)/T(\BQ)\nu(K(\cp^n)\cdot Z(\BAF))|$, which we showed in Proposition \ref{proposition on central character} to be bounded independently of $n$. But by the proof of Proposition \ref{Proposition Savin with Central Character}, the representation of $\fX_G$ on $\Hom(\pi_\infty, L^2_{\text{cusp}}(Y^*H(\cp^n)))$ factors through a sum of copies of the regular representation of a finite quotient $\fX_{G,\cp^n}$. As such, all characters of $\fX_G$ appearing in the quotient do so with equal multiplicity. Thus the bound $M$ also holds for $\omega$. 
\end{proof}

\subsection{Limit Multiplicity for $G$.} \label{section on our limit multiplicity results}
Before we assemble the results for various endoscopic groups, we bound the number of central character data $(\fX_H, \omega)$ of a given level and restriction to $\fX_G$. 
\begin{lemma} \label{lemma on counting central characters}
	Let $H \in \mE(G)$. Let $(\mathfrak{X}_G, \omega)$ be a central character datum for $G$. For each $n$, fix a level structure $K^H_f(\cp^n)$ as in section~\ref{section on level structures}. Define \[ \Xi(\omega, \cp^n) = \{ (\fX_H,\omega_H) \;:\; \omega_H\mid_{\fX_G} = \omega,\; \omega_H(\fX_H \cap K^H_f(\cp^n)) = \omega_H(\fX_H \cap Z_H(F)) = 1 \}. \] Then we have $ |\Xi(\omega, \cp^n)| \ll \norm(\cp^n). $
\end{lemma}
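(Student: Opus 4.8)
The plan is to reduce the count to the order of a finite ray‑class group of the one‑dimensional torus $Z_H/Z_G$, which grows like $\norm(\fp^n)$.

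If one of the factors of $H=U(N_1)\times U(N_2)$ is trivial then $H=G$, $\fX_H=\fX_G$, and $\Xi(\omega,\fp^n)$ is the single datum $(\fX_G,\omega)$, so I may assume $N_1,N_2\geq 1$; then $Z_H=Z_{U(N_1)}\times Z_{U(N_2)}$ is a product of two copies of $U_{E/F}(1)$ and $Z_G=Z_{U(N)}\cong U_{E/F}(1)$. By \cite[\S 3.2]{A} the inclusion $\fX_G\hookrightarrow\fX_H$ comes from a canonical embedding of $Z_G$ as a one‑dimensional $F$‑subtorus of $Z_H$; since any rank‑one $F$‑subtorus of $U_{E/F}(1)\times U_{E/F}(1)$ is a direct factor, the projection $q\colon Z_H\to S:=Z_H/Z_G$ is split, so $S$ is a one‑dimensional $F$‑torus, $\fX_G=\ker\bigl(q_*\colon Z_H(\BAF)\to S(\BAF)\bigr)$, and $q(Z_H(F))=S(F)$.

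Next I would set up the torsor structure. All data in $\Xi(\omega,\fp^n)$ share the same archimedean component $\omega_H\vert_{Z_H(F_\infty)}$, determined by the fixed parameter $\psi_\infty$ via Proposition \ref{proposition on central character}; hence the fibre of $\omega_H\mapsto\omega_H\vert_{\fX_G}$ over $\omega$ is a coset of the group $A(\fp^n)$ of automorphic characters of $Z_H(\BAF)$ trivial on $\fX_G$, on $Z_H(F_\infty)$, on $Z_H(F)$, and on the level subgroup $\fX_H\cap K^H_f(\fp^n)$, so that $|\Xi(\omega,\fp^n)|\leq|A(\fp^n)|$. The subgroup $Z_H(F_\infty)\cdot\bigl(\fX_H\cap K^H_f(\fp^n)\bigr)$ is open in $Z_H(\BAF)$ and $Z_H(F)\dom Z_H(\BAF)$ is compact (the torus $Z_H$ being anisotropic over $F$), so $A(\fp^n)$ is finite and, by Pontryagin duality, $|A(\fp^n)|$ equals the order of $Q(\fp^n):=Z_H(\BAF)\big/\overline{\fX_G\cdot Z_H(F_\infty)\cdot Z_H(F)\cdot\bigl(\fX_H\cap K^H_f(\fp^n)\bigr)}$. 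Since $\fX_G=\ker q_*$ and $q(Z_H(F))=S(F)$, pushing forward along $q$ identifies $Q(\fp^n)$ with a subgroup of the ray‑class group $S(\BAF)\big/\overline{S(F)\cdot S(F_\infty)\cdot q\bigl(\fX_H\cap K^H_f(\fp^n)\bigr)}$.

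Finally I would bound that ray‑class group — finite by the usual theory of tori over number fields — by analysing its conductor place by place: for $v\notin S\cup\{v_\fp\}$, $\fX_H\cap K^H_v(\fp^n)$ is the maximal compact of $Z_H(F_v)$ and, for all but finitely many $v$, $q$ carries it onto the maximal compact of $S(F_v)$ (finite index otherwise, absorbed by enlarging $S$); at the places of $S$ the image is a compact open subgroup of $S(F_v)$ fixed independently of $n$; and at $v_\fp$ the image of the principal congruence subgroup of level $\fp^n$ contains that of level $\fp^{n+c}$ in $S(\MO_{v_\fp})$ for a fixed $c\geq 0$ depending only on the ramification of $q$ at $v_\fp$. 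Consequently $|Q(\fp^n)|$ is at most an $n$‑independent constant (the class number of $S$ and the finitely many local corrections at the places of $S$) times $\bigl[\,S(\MO_{v_\fp}):U_{S,v_\fp}(\fp^{n+c})\,\bigr]\asymp\norm(\fp)^{n}$, the last estimate holding because $\dim S=1$; hence $|\Xi(\omega,\fp^n)|\leq|Q(\fp^n)|\ll\norm(\fp^n)$. The one delicate point is this final bookkeeping: checking that the class number of $S$, the local correction factors at the places of $S$, and the exponent shift $c$ are all bounded independently of $n$, so that the entire growth is carried by the single local factor at $v_\fp$, which is of size $\asymp\norm(\fp^n)$ precisely because the torus $Z_H/Z_G$ is one‑dimensional.
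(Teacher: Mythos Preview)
Your argument is correct and reaches the same bound, but the route differs from the paper's. The paper works concretely: writing $\omega_H=\omega_1\times\omega_2$ as a product of characters of $Z_{U(N_1)}$ and $Z_{U(N_2)}$, it observes that each $\omega_i$ is $\theta_i\circ\det$ for a Hecke character $\theta_i$ of $\BAE^1/E^1$, that the restriction condition $\omega_H\mid_{\fX_G}=\omega$ pins down $\theta_2$ up to at most $N_2$ choices once $\theta_1$ is fixed, and then counts the $\theta_1$ of conductor dividing $\fa\fp^n$ directly. Your proof is more structural: you pass to the one-dimensional quotient torus $S=Z_H/Z_G$, interpret $\Xi(\omega,\fp^n)$ as a torsor under the group of level-$\fp^n$ automorphic characters of $S$, and bound that by a ray-class-group count. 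Your approach makes transparent \emph{why} the exponent is $1$ (it is $\dim S$) and would generalize verbatim to any $H$ with $\dim(Z_H/Z_G)=1$; the paper's approach is shorter and avoids the bookkeeping about splitting, surjectivity of $q$ on rational points, and local behaviour of $q$ at bad places. One small remark: your assertion that any rank-one $F$-subtorus of $U_{E/F}(1)^2$ is a direct factor is stronger than you need, though it happens to be true here since $\Gamma_{E/F}$ acts on the character lattice $\BZ^2$ by $-1$, so any $\BZ$-splitting is automatically equivariant; it would be cleaner simply to note that the diagonal $U(1)\hookrightarrow U(1)^2$ is split by the first projection. Finally, you are right to invoke the fixed $\psi_\infty$ to pin down the archimedean component of $\omega_H$ --- without this the count is not finite --- and the paper's proof leaves this point implicit.
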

\begin{proof}
Central characters of~$H = U_{E/F}(N_1) \times U_{E/F}(N_2)$ are products~$\omega_H = \omega_1 \times \omega_2$ of characters of the respective centers. The condition upon restriction to $Z_H(F)$ implies that these are of the form $\omega_i = \theta_i \circ \det$ for $\theta_i$ a Hecke character of $\BA^1_{E}$. Given a choice of~$\theta_1$, the condition that~$\omega_H \mid_{\mathfrak{X}_G} = \omega$ restricts~$\theta_2$ to at most $N_2$ different characters. The restriction on conductor thus implies that $|\Xi(\chi, \cp^n)| \ll |\Xi(\fa\cp^n)| $, where $\Xi(\fa\cp^n)$ consists of Hecke characters $\theta_1$ of $\BAE^1/E^1$ whose conductor divides $\fa\cp^n$; the presence of a conductor away from $\cp$ comes from the possibility that at places $v \in S_f$, the (fixed) subgroup $K^H_v(\cp^n)$ is not maximal. The number of such characters grows like $\norm(\cp^n)$.
\end{proof}

We now assemble the results of Section \ref{section on others' limit multiplicity results} to give upper bounds for the contribution of parameters where all but one summand have trivial Arthur $SL_2$. We start by bounding the contribution of each hyperendoscopic group of $H_\psi$. 

\begin{proposition}\label{prop on hyperendoscopic upper bounds} Let~$G \in \tilde{\mE}_{sim}(N)$, and~$H = (U_{E/F}(N_1) \times U_{E/F}(N_2), \xi) \in \mE(G)$. Let $\psi_\infty$ and $\psi^H_\infty$ be such that 
\begin{itemize}
\item[(i)] $\psi_\infty = \xi \circ \psi^H_\infty$, 
\item[(ii)]$\psi_\infty = \psi_{\infty,1} \oplus \psi_{\infty,2}$ with $ \psi_{\infty,1}\mid_{SL_2} = \nu(N_1)$ and $\psi_{\infty,2}\mid_{SL_2} = \nu(1)^{N_2}$,
\item[(iii)] each $\psi_{i,\infty}$ factors through ${^L}U(N_i)$. 
\end{itemize} 
Let $(\fX_G, \omega)$ be a central character datum for $H$ consistent with to $\psi^H_\infty$. Assume that~$\cp$ is large enough to apply the results of Theorem~\ref{Ferrari}. Let~$H_q \in \mH\mE(H)$, and let $f(\cp^n)$ be the sequence of test functions defined in \ref{definition of test function}. Then \[ \left|I^{H_q}_{\text{disc}, \psi^H_\infty, \omega}(f^{H_q}(\cp^n)) \right|  \ll Nm(\cp^n)^{N(N-N_1)}. \]
\end{proposition}
\begin{proof}
Since $H_q$ is a hyperendoscopic group of~$H$ we have~$H_q = H_q^1 \times H_q^2$, where~$\psi_i$ factors through $H_q^i$. The growth rate in the theorem is defined up to constants, so we can assume that $f^{H_q}(\cp^n) = f^{H^1_q}(\cp^n) \times f^{H^2_q}(\cp^n)$; this is true locally almost everywhere by the Fundamental Lemma. Indeed, at all $v \notin S \cup \{v_\cp\}$, the function~$f_v^{H_q}(\cp^n)$ can be taken to be the characteristic function of a hyperspecial maximal compact subgroup of $H_{q,v}$.  At $v = v_\cp$, we iterate the conclusion of  Theorem \ref{Ferrari} to get \begin{equation}\label{eq f vs phi }f^{H_q}_{v_\cp}(\cp^n) = \norm(\cp)^{-n\cdot d(G,H_q)} \frac{1_{K_{v_\cp}(\cp^n)^{H_q}}}{\mu(K_{v_\cp}(\cp^n))}:=\frac{\norm(\cp)^{-n\cdot d(G,H_q)}}{\mu(K_{v_p})(\cp^n)/\mu(K_{v_\cp}(\cp^n)^{H_q})} \phi_{v_\cp}(\cp^n).\end{equation} Write $\phi_{v_\cp}(\cp^n) = \phi^1_{v_\cp}(\cp^n) \times \phi^2_{v_\cp}(\cp^n)$, and for $i=1,2$, let  \[\phi^{H^i_q}(\cp^n) = \phi^i_{v_\cp}(\cp^n) \cdot \prod_{v \neq v_\cp} f^{H^i_q}_v(\cp^n), \quad \phi^{H_q}(\cp^n) = \phi^{H^1_q}(\cp^n) \times \phi^{H_q^2}(\cp^n).\]

Each of the two functions $\phi^{H_q^i}(\cp^n)$ satisfies the identical assumptions of Propositions \ref{Proposition Savin with Central Character} and \ref{proposition growth of characters}. We also recall that $H = U(N_1) \times U(N_2)$, and we shorten $U(N_i) = H^i$. We also have $\psi^H = \psi_1 \oplus \psi_2$ with $\psi_i$ landing in ${^L}H^i$. Thus if we fix data $(\fX_{H^1}, \omega_1)$ and $(\fX_{H^2}, \omega_2)$ coming from $H^1$ and $H^2$ respectively, we find that \begin{align*}
\left|\sum_{\Psi(H,\psi^H_\infty, \omega_1 \times \omega_2)} I^{H_q}_{\text{disc}, \psi^H}(\phi^{H_q}(\cp^n)) 
\right| & \leq \sum_{ \Psi(H,\psi_\infty^H, \omega_1 \times \omega_2)} |I^{H_q^1}_{\text{disc}, \psi_{1}}(\phi^1(\cp^n))| \cdot |I^{H_q^2}_{\text{disc}, \psi_{2}}(\phi^2(\cp^n))| \\
&= |I^{H_q^1}_{\text{disc}, \psi_{\infty,1}, \omega_1}(\phi^1(\cp^n))| \times |I^{H_q^2}_{\text{disc}, \psi_{\infty,2}, \omega_2}(\phi^2(\cp^n))| \\ 
&\ll M \cdot Nm(\cp^n)^{\dim H_q^2 -1}. 
\end{align*}
The quantity in the left-hand side above isn't quite what we want to measure. First, we want to replace the choice of a pair of central characters $\omega_1 \times \omega_2$ by a sum over all parameters with central character datum $(\fX_G, \omega)$. In Lemma~\ref{lemma on counting central characters}, we saw that the number of products~$\omega_1 \times \omega_2$ of level~$\cp^n$ which restrict to~$\omega$ on~$\fX_G$ is~$\ll \norm(\cp^n)$.  Second, we slightly modify the test functions. From \eqref{eq f vs phi }, we have 

\[ f^{H_q}(\cp^n) = C(G, H_q, n)\phi^{H_q}(\cp^n) , \quad  C(G, H_q, n)\asymp \norm(\cp)^{n \cdot d(G,H_q)}. \] 

Thus combining our upper bounds with these modifications we obtain

\begin{align*} \left| I^{H_q}_{\text{disc}, \psi^H_\infty, \omega}(f^{H_q}(\cp^n)) \right|  &\ll \norm(\cp^n)^{(1+ d(G, H_q))}\left|\sum_{\psi^H \in \Psi(H,\psi_\infty^H, \omega_1 \times \omega_2)} I^{H_q}_{\text{disc}, \psi^H}(\phi^{H_q}(\cp^n)) 
\right| \\ 
& \ll \norm(\cp^n)^{d(G,H_q)+\dim H^2_q} \\& = \norm(\cp^n)^{\dim(G)/2 -\dim (H^1_q)/2 + \dim (H^2_q)/2}. \end{align*}
Recall that $\dim G = N^2$, and that since the dual group of $H^1_q$ receives an ~$N_1$-dimensional irreducible representation of~$SL_2$, we have~$\dim(H^1_q) = N_1^2$. Finally, it follows that $\dim(H^2_q) \leq (N-N_1)^2$, which gives us the desired upper bounds.
\end{proof}
\begin{remark}
	Note that the only situation in which this upper bound has a chance of being sharp is when $\dim H_q^2 = (N-N_1)^2$, i.e. when $H_q = H$. 
\end{remark}
We have now collected all the facts leading up to our limit multiplicity theorem. 
\begin{theorem} \label{main theorem 2}
	Let $\psi_{\infty}$ be an Arthur parameter with regular infinitesimal character, and such that~$\psi_{\infty}\mid_{SL_2(\BC)} = \nu(2k) \oplus \nu(1)^{N-2k}$. Let $(\mathfrak{X}_G,1)$ be the trivial central character. Fix~$\Pi^0_{\psi_\infty} \subset \Pi_{\psi_{\infty}}$. For each~$\psi \in \Psi(G, \psi_\infty, 1)$, let \[ \Pi^0_\psi = \{\pi = \ten'_v \pi_v \in \Pi_{\psi} \mid \pi_\infty \in \Pi^0_{\psi_\infty}. \} \] Then \begin{equation} \label{inequality in application 1} \sum_{\psi \in \Psi(G, \psi_\infty,1)} \sum_{\pi \in \Pi^0_\psi} m(\pi) \dim \pi_f^{K(\cp^n)} \ll \norm(\cp^n)^{N(N-2k)}. \end{equation}
\end{theorem}
\begin{proof}

	From Proposition~\ref{Rdisc computes K-fixed vectors}, we take $f(\cp^n)$ as in Section~\ref{Choice of Test Functions} and write \begin{align*} \sum_{\psi \in \Psi(G, \psi_\infty,1)}  \sum_{\pi \in \Pi^0_\psi} m(\pi) \dim \pi_f^{K(\cp^n)} &=  \sum_{\psi \in \Psi(G, \psi_\infty,1)}  \tr R_{\text{disc}, \psi}(f(\cp^n)) \\
	&= \sum_{\psi \in \Psi(G, \psi_\infty,1)}  I_{\text{disc}, \psi}(f(\cp^n))  & \text{(Theorem \ref{Bergeron-Clozel on parameters with reg inf char})} \\ 
	&\leq \sum_{\psi \in \Psi(G, \psi_\infty,1)}  C(\psi) S(\psi,s_{H_\psi}, f(\cp^n)), \end{align*}where the last inequality follows from the results of Section~\ref{Bounds by the Dominang Group}, where the notation $C(\psi)$ was defined, since~$f(\cp^n)$ takes only positive values. The group~$H_\psi$ is determined by the localization $\psi_v$ of $\psi$ at any place $v$, and in particular by $\psi_\infty$. Thus $H_\psi$ is the same for any~$\psi \in \Psi(G, \psi_\infty,1) $ since by definition they all localize to the same $\psi_\infty$. By the assumption on~$\psi_\infty$, we have~$H_\psi = U_{E/F}(2k) \times U_{E/F}(N-2k)$, and the parameters satisfy the assumptions of Proposition~\ref{prop on hyperendoscopic upper bounds}. To lighten the notation, we denote $H_\psi$ by $H$ and $s_\psi$ by $s_H$ for the end of the proof. 

The parameter $\psi^{H}$ corresponding to $s_{H}$ under the bijection of Lemma \ref{lemma on endoscopic bijection}, we have shown in Corollary \ref{corollary on hyperendoscopy S(psi,sH,f)} that \[ S(\psi,s_H, f(\cp^n)) = \iota(G,H) \sum_{\mH \in \mH\mE(H,\psi)} \iota(\mH) I^{H_q}_{\text{disc},\psi^H}(f^{H_q}(\cp^n)). \] For each summand on the right-hand side, we sum over $\Psi(H, \psi^H, \omega)$, where $(\mathfrak{X}_G, \omega)$ is determined by $\psi^H$ as in Lemma~\ref{lemma on restriction of central characters}. We then apply Proposition~\ref{prop on hyperendoscopic upper bounds} with~$N_1 = 2k$. Note that we have ensured in Lemma \ref{lemmainfcharremainsregular} that the infinitesimal character of the representations of~$H_{q, \infty}$ associated to all~$\psi_{\infty}^{H_q}$ are regular. This gives us the following bounds:  \[ \left|\sum_{\psi \in \Psi(H,\psi_\infty, 1)} I^{H_q}_{\text{disc}, \psi^H}(f^{H_q}(\cp^n)) \right|  \ll Nm(\cp^n)^{N(N-2k)}. \] We conclude by summing over the finitely many $H_q \in \mH\mE(H,\psi)$.  
\end{proof}

\section{Applications to Growth of Cohomology.} \label{section on cohomology}

We now apply the results of Section \ref{section on limit multiplicity} to cohomology of arithmetic groups. This section is concerned with local questions at infinity, and the notation is different from the rest of the paper. From now until Section \ref{last section on limit multiplicity}, $G$ will be a Lie group.

\subsection{Cohomological Representations} Given a Lie group $G$, let $\tilde{G}$ denote the unitary dual of $G$. 

\begin{theorem}[Matsushima's formula, \cite{Ma67}]
	Let~$G$ be a connected semisimple Lie group with maximal compact subgroup~$K$ and complexified Lie algebra~$\fg$. Let~$\Gamma \subset G$ be a cocompact lattice and let~$X_\Gamma = \Gamma \dom G /K$. For~$\pi \in \tilde{G}$, denote by~$m(\pi, \Gamma)$ the multiplicity of $\pi$ in the right-regular representation of~$G$ on~$L^2(\Gamma\dom G)$. Then:
	\[ \dim(H^i(X_\Gamma, \BC)) = \sum_{\pi \in \tilde{G}}m(\pi, \Gamma)\dim(H^i(\fg,K;\pi)).\] 
\end{theorem}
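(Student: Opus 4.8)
The plan is to realize both sides as the relative Lie algebra cohomology of a single space of functions on $\Gamma\dom G$ and then decompose that space spectrally. First I would recall the de Rham description of the left-hand side. Since $G/K$ is contractible and $\Gamma$ acts properly discontinuously with compact quotient, $H^i(X_\Gamma,\BC)$ is computed by the complex of $\Gamma$-invariant smooth differential forms on $G/K$; pulling these back along $G \to G/K$, using the $G$-equivariant identification of the cotangent space at $eK$ with $(\fg/\fk)^*$, and keeping track of right-$K$-invariance, this complex is identified with $\Hom_K\big(\wedge^\bullet(\fg/\fk),\,C^\infty(\Gamma\dom G)\big)$ equipped with the Chevalley--Eilenberg differential, whose cohomology is by definition $H^\bullet\big(\fg,K;C^\infty(\Gamma\dom G)\big)$. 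If $\Gamma$ has torsion one first passes to a torsion-free normal subgroup of finite index and takes invariants, or interprets $X_\Gamma$ as an orbifold; the identification is unaffected.

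Next I would pass from $C^\infty$-coefficients to the discrete spectrum. Because $\Gamma$ is cocompact, $L^2(\Gamma\dom G)$ is a Hilbert direct sum $\widehat{\bigoplus}_\pi m(\pi,\Gamma)\,\pi$ with every $m(\pi,\Gamma)$ finite (Gelfand--Graev--Piatetski-Shapiro). The key reduction is that $(\fg,K)$-cohomology detects only finitely many summands: by Wigner's lemma any $\pi$ with $H^\bullet(\fg,K;\pi)\neq 0$ has the same infinitesimal character, hence the same Casimir eigenvalue, as the trivial representation, and the subspace of $C^\infty(\Gamma\dom G)$ on which the Casimir acts by that scalar is finite-dimensional because the Casimir is elliptic on the compact manifold $\Gamma\dom G$. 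A Matsushima--Kuga computation with the Laplacian then shows that a harmonic representative of any class in $H^i\big(\fg,K;C^\infty(\Gamma\dom G)\big)$ already lies in the algebraic sum of the $K$-finite smooth vectors of these finitely many $\pi$. Therefore $H^i\big(\fg,K;C^\infty(\Gamma\dom G)\big)\simeq\bigoplus_\pi m(\pi,\Gamma)\,H^i(\fg,K;\pi)$, an effectively finite sum, and combining this with the first step yields Matsushima's formula.

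The step I expect to be the main obstacle is precisely this interchange of $(\fg,K)$-cohomology with the (here, completely discrete) $L^2$-spectrum, i.e.\ showing that harmonic projection of a smooth differential form does not leave the algebraic span of $K$-finite vectors. This is where cocompactness — so that all of $L^2(\Gamma\dom G)$ decomposes discretely with finite multiplicities — and the finiteness furnished by Wigner's lemma enter in an essential way, and the Casimir/Laplacian identity on the compact locally symmetric space is the technical heart of the matter. Everything else, namely the unwinding of invariant forms into relative Lie algebra cochains and the Peter--Weyl-type decomposition of $L^2(\Gamma\dom G)$, is standard.
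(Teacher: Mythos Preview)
The paper does not prove this statement: Matsushima's formula is simply quoted as a classical result with a citation to \cite{Ma67}, and no argument is given. Your outline is the standard modern proof (essentially the treatment in Borel--Wallach): identify the de Rham complex of $X_\Gamma$ with the relative Lie algebra complex with coefficients in $C^\infty(\Gamma\backslash G)$, then use cocompactness to decompose discretely and Wigner's lemma together with the Casimir/Laplacian comparison to reduce to finitely many summands. That is a correct sketch, and indeed more than the paper itself provides.
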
 

The $H^i(\fg, K; \pi)$ which appear in the right-hand side are the so-called $(\fg,K)$ cohomology groups of~$\pi$.  We say that $\pi$ is \emph{cohomological} if~$H^*(\fg, K; \pi) \neq 0$; such representations were characterized by Vogan-Zuckerman \cite{VZ}. 

\begin{theorem}[\cite{VZ}] \label{Vogan-Zuckerman Theorem}
	Let~$G$, $\fg$ be as above. Let~$K$ a maximal compact subgroup of~$G$ and~$\fg = \fk \oplus \fs $ be the corresponding Cartan decomposition with $\fk$ the Lie algebra of $K$. The group~$G$ has finitely many cohomological representations $\pi$, and~$H^i(\fg, K; \pi) \neq 0$ if and only if: 
	\begin{itemize}
		\item[(i)] $\pi$ has the infinitesimal character of the trivial representation of~$G$, and; 
		\item[(ii)]  $\Hom_K(\pi, \wedge^i\fs) \neq 0$,
	\end{itemize} where the action of~$K$ on~$\wedge^i \fs$ is induced by the adjoint representation. 
	
\end{theorem}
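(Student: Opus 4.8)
The plan is to identify $(\fg,K)$-cohomology with Hodge theory on a finite-dimensional complex, following \cite{VZ} (and Borel--Wallach, Chapter II). Recall that $H^\bullet(\fg,K;\pi)$ is computed by the complex $C^\bullet(\fg,K;\pi) = \Hom_K(\wedge^\bullet(\fg/\fk),\pi)$, and that $\fg/\fk \cong \fp$ as $K$-modules via the Cartan decomposition $\fg = \fk\oplus\fp$. Since $\pi$ is admissible and $\wedge^i\fp$ is a finite-dimensional $K$-module, each $C^i = \Hom_K(\wedge^i\fp,\pi)$ is finite-dimensional; write $d$ for the differential. The two ingredients are Wigner's lemma, for the necessity of (i), and Kuga's lemma, for everything else.

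For necessity, first observe that $Z(\fg)$ acts on the complex $C^\bullet(\fg,K;\pi)$ through its action on the coefficient module $\pi$, hence by the infinitesimal character $\chi_\pi$; comparing with the case of trivial coefficients shows it must also act on the cohomology through the infinitesimal character $\chi_{\mathrm{triv}}$ of the trivial representation (this is Wigner's lemma), so $H^\bullet(\fg,K;\pi) = 0$ unless $\chi_\pi = \chi_{\mathrm{triv}}$, which is (i). Necessity of (ii) is formal, since $H^i$ is a subquotient of $C^i$, so $H^i\ne 0$ forces $\Hom_K(\wedge^i\fp,\pi)\ne 0$.

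For sufficiency, assume (i) and (ii). Because $\fg$ is semisimple, its Killing form is positive definite on $\fp$, and combined with the $G$-invariant inner product on the Hilbert space of $\pi$ (here unitarity of $\pi$ is essential) it makes each $C^i$ a finite-dimensional Hermitian space; thus $d$ has an adjoint $d^*$ and we form $\Delta = dd^* + d^*d$. Kuga's lemma computes $\Delta$: it acts on all of $C^\bullet(\fg,K;\pi)$ by the scalar by which a suitably normalized Casimir acts on $\pi$, namely $\langle\lambda_\pi,\lambda_\pi\rangle - \langle\rho,\rho\rangle$ with $\lambda_\pi$ the Harish-Chandra parameter of $\pi$. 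By (i), $\lambda_\pi$ is Weyl-conjugate to $\rho$, so this scalar vanishes and $\Delta\equiv 0$ on $C^\bullet$. From $0 = \langle\Delta\omega,\omega\rangle = \|d\omega\|^2 + \|d^*\omega\|^2$ we conclude $d\equiv 0$, hence $H^i(\fg,K;\pi)\cong C^i = \Hom_K(\wedge^i\fp,\pi)$ for every $i$; together with (ii) this is nonzero. Finiteness of the set of cohomological $\pi$ then follows, since any such $\pi$ has the infinitesimal character of the trivial representation by necessity of (i), and there are only finitely many irreducible admissible $(\fg,K)$-modules with a fixed infinitesimal character (Harish-Chandra).

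I expect the main obstacle to be Kuga's lemma itself: carefully setting up the Hermitian structure on $C^\bullet(\fg,K;\pi)$ in the presence of the infinite-dimensional coefficient module $\pi$, checking that $d^*$ is genuinely the Hilbert-space adjoint of $d$, and pinning down the normalization so that the resulting scalar is exactly the Casimir eigenvalue and is therefore killed by (i). The finite-length input and Wigner's lemma are comparatively soft, and the finer classification of the cohomological $\pi$ as Vogan--Zuckerman modules $A_\fq(\lambda)$ — available but not needed for this statement — would require substantially more work.
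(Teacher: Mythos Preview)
The paper does not prove this theorem; it is quoted from \cite{VZ} as background and immediately used. So there is no ``paper's own proof'' to compare against.

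Your sketch is the standard Borel--Wallach/Vogan--Zuckerman argument and is essentially correct. Two small remarks. First, the complex you write down is $\Hom_K(\wedge^i\fp,\pi)$, whereas the statement in the paper has $\Hom_K(\pi,\wedge^i\fp)$; since $K$ is compact and $\wedge^i\fp$ is finite-dimensional, both nonvanishing conditions amount to a common $K$-type, so this is harmless, but worth a sentence. Second, your formulation of Wigner's lemma is slightly off: $Z(\fg)$ does not literally act on the complex by $\chi_{\mathrm{triv}}$; rather, the Casimir (and more generally $Z(\fg)$) acts on the complex via its action on $\pi$ and on $\wedge^\bullet\fp$, and the vanishing when $\chi_\pi\neq\chi_{\mathrm{triv}}$ comes from comparing these two actions (or, cleanly, from the Kuga computation you already invoke, which makes the Wigner step redundant). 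Your identification of Kuga's lemma as the crux, and finiteness via Harish-Chandra's theorem on modules with fixed infinitesimal character, is exactly right.
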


The results apply only to semisimple groups: they are extended to~$U(a,b)$ in \cite{Vo97}, and condition (ii) above implies that cohomological representations have trivial central character. Below, we give a concrete parametrization of cohomological representations of~$U(a,b)$ in terms of refinements of partitions of $a+b$ which are compatible with the signature $(a,b)$. More details can be found in \cite{BC05} and \cite{GG20}. 

\subsubsection{Cohomological Representations and Ordered Bipartitions} \label{computation of cohomology}

In \cite{VZ}, cohomological representations $A_\fq$ are built from so-called $\theta$-stable parabolic subalgebras $\fq = \fl \oplus \fu$. In~\cite[\S 5]{BC05}, Bergeron--Clozel show that for $U(a,b)$, the data of the algebra~$\fq$ can be encoded in a choice of centralizing Levi subgroup~$L(\fq) = \prod U(a_i,b_i) \subset U(a,b)$ whose Lie algebra is~$\fl$. 
Thus~$\fq$'s are parameterized by ordered tuples \[ B = ((a_1,b_1),...,(a_r, b_r))\] of pairs of nonnegative integers with $\sum a_i = a$ and $\sum b_i = b$. We call these tuples $B$ \emph{ordered bipartitions} of $(a,b)$ and denote the associated Levi subgroup $L_B$, and the corresponding representation by $\pi_{B}$. 

The ordered bipartitions of $(a,b)$ \emph{almost} parametrize the cohomological representation of $U(a,b)$, but there is redundancy. Specifically, $\pi_B \simeq \pi_{B'}$ if $B'$ has adjacent pairs of the form $(a_1, 0),(a_2,0)$ (resp.~$(0,b_1)(0,b_2)$) which are collapsed into~$(a_1+a_2,0)$ (resp.~$(0,b_1+b_2)$) in $B$. We will say that an ordered bipartition is \emph{reduced} if all pairs in which one entry is zero are maximally broken up.  

\begin{example}The following ordered bipartition is not reduced: \[ ((3,1),(2,0),(1,0),(0,3)).\] It is associated to the same cohomological representation that  to the following reduced ordered bipartition: \[((3,1)(1,0)(1,0)(1,0)(0,1)(0,1)(0,1)).\]
\end{example}

The cohomology of $\pi_B$ can be expressed in terms of $B$.  

\begin{prop}[\cite{VZ}, Proposition 3.2] \label{prop degree computation}
	Let $\fq = \fl \oplus \fu$ be a $\theta$-stable parabolic subalgebra and let $\fg = \fk \oplus \fs$ be the Cartan decomposition. Let $R = \dim \fu  \cap \fs$. Then \[H^i(\fg,K, A_\fq) \simeq \Hom_{\fl \cap \fk}(\wedge^{i-R} \fs, \BC).\] 
\end{prop}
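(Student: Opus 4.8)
The plan is to convert the $(\fg,K)$-cohomology of $A_\fq$ into a branching computation for the maximal compact $K$, and then to read off the answer from the known $K$-spectrum of $A_\fq$. Since $A_\fq$ is an irreducible \emph{unitary} $(\fg,K)$-module whose infinitesimal character is that of the trivial representation, the Casimir acts by $0$ on $A_\fq$; by the standard Hodge-theoretic argument of Borel--Wallach the relevant Laplacian on the cochain complex $\Hom_K(\wedge^\bullet\fp,A_\fq)$ is a constant multiple of this Casimir action and hence vanishes, so the Chevalley--Eilenberg differential is identically zero and
\[
H^i(\fg,K;A_\fq)\;\cong\;\Hom_K(\wedge^i\fp,\,A_\fq).
\]
It therefore suffices to identify, with multiplicity, the $K$-types of $\wedge^i\fp$ that embed into $A_\fq$.

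Next I would exploit the $\fl\cap\fk$-module decomposition $\fp=(\fl\cap\fp)\oplus(\fu\cap\fp)\oplus(\bar{\fu}\cap\fp)$, which induces
\[
\wedge^i\fp\;=\;\bigoplus_{a+b+c=i}\wedge^a(\fl\cap\fp)\otimes\wedge^b(\fu\cap\fp)\otimes\wedge^c(\bar{\fu}\cap\fp)
\]
as $\fl\cap\fk$-modules, and show that the only summand mapping nontrivially into $A_\fq$ is the one with $b=R=\dim(\fu\cap\fp)$ and $c=0$, namely $\wedge^{i-R}(\fl\cap\fp)\otimes\wedge^R(\fu\cap\fp)$; on it, $\Hom_K(-,A_\fq)$ returns exactly $\Hom_{\fl\cap\fk}(\wedge^{i-R}(\fl\cap\fp),\BC)$. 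The mechanism is that $\wedge^R(\fu\cap\fp)$ is the one-dimensional $\fl\cap\fk$-module of extreme weight $2\rho(\fu\cap\fp)$, and by Vogan--Zuckerman's lowest-$K$-type theorem the $K$-type $V_{2\rho(\fu\cap\fp)}$ it generates is the unique lowest $K$-type of $A_\fq$, occurring with multiplicity one, while every other $K$-type $V_\mu$ of $A_\fq$ has $\mu=2\rho(\fu\cap\fp)+\nu$ with $\nu$ a nonzero $\fl\cap\fk$-dominant weight. Tensoring a $K$-type coming from a $b<R$ or $c>0$ summand against this extremal structure forces a highest weight incompatible with the $K$-spectrum of $A_\fq$, which kills those contributions; this is precisely where unitarity and the extremality of $2\rho(\fu\cap\fp)$ are used.

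The remaining and most delicate ingredient is the multiplicity bookkeeping: for each $\fl\cap\fk$-dominant $\nu$ occurring in $\wedge^{j}(\fl\cap\fp)$ one must show that $V_{2\rho(\fu\cap\fp)+\nu}$ occurs in $A_\fq$ with multiplicity exactly $\dim\Hom_{\fl\cap\fk}\!\big(V_\nu,\wedge^{j}(\fl\cap\fp)\big)$, in the degree $i=j+R$. I would obtain this from the realization of $A_\fq$ as the cohomologically induced module $\mathcal{R}^{S}_{\fq}(\BC)$ with $S=\dim(\fu\cap\fk)$, combining Vogan's bottom-layer $K$-type analysis with Kostant's theorem computing $\fu$-cohomology in order to transport the $\fl$-structure through the derived induction functor; assembling this with the preceding step collapses the direct sum to the single term $\Hom_{\fl\cap\fk}(\wedge^{i-R}(\fl\cap\fp),\BC)$. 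The hard part throughout is controlling the entire ``bottom layer'' of $K$-types of $A_\fq$ relevant to $\wedge^\bullet\fp$ — not merely the lowest $K$-type — and in particular ruling out the $\bar{\fu}\cap\fp$-contributions cleanly.
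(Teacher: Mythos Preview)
The paper does not give its own proof of this proposition; it is simply quoted from \cite{VZ} and used as a black box. Your sketch is essentially the original Vogan--Zuckerman argument: collapse the complex via Kuga's lemma (the Casimir acts by zero, so the Laplacian on $\Hom_K(\wedge^\bullet\fp,A_\fq)$ vanishes and $H^i\cong\Hom_K(\wedge^i\fp,A_\fq)$), then decompose $\fp=(\fl\cap\fp)\oplus(\fu\cap\fp)\oplus(\bar\fu\cap\fp)$ as an $\fl\cap\fk$-module and use the bottom-layer $K$-type structure of $A_\fq$ to see that only the summand $\wedge^{i-R}(\fl\cap\fp)\otimes\wedge^R(\fu\cap\fp)$ survives. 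You have correctly identified the delicate point as the multiplicity matching between the $K$-types of $A_\fq$ above the bottom layer and the $\fl\cap\fk$-types of $\wedge^\bullet(\fl\cap\fp)$; in \cite{VZ} this is handled by a direct weight computation together with the Dirac-type inequality (their Lemma~3.4 and Proposition~3.6), rather than by invoking Kostant's theorem on $\fu$-cohomology as you propose, but either route works.

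One incidental remark: the displayed formula in the paper has $\wedge^{i-R}\fp$ on the right-hand side, whereas the correct statement (and the one you actually prove) has $\wedge^{i-R}(\fl\cap\fp)$. This is a typo in the paper, consistent with the subsequent computation of $R$ in \eqref{degreeofcohomology}.
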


In particular, the smallest nonvanishing degree of cohomology of $A_\fq$ is $R$, which, writing $A_\fq = \pi_B$ and referring once more to \cite{BC05} and \cite{GG20}, is equal to  \begin{equation}
\label{degreeofcohomology} R = \frac{\dim(\fs)-\dim (\fs \cap \fl)}{2}  = ab- \sum_{i=1}^r a_ib_i. 
\end{equation}
In particular, if $a_ib_i = 0$ for all pairs, i.e. if $L$ is compact, then $\pi_B$ is a discrete series representation and only has cohomology in degree $ab$. 
\subsection{Arthur Parameters of Cohomological Representations} \label{Arthur Packets of Cohomological Representations}
We turn our attention to archimedean parameters~$\psi$ whose associated Arthur packets contain cohomological representations. These are obtained via embedding of~$L$-groups from parameters associated to the trivial representation of Levi subgroups of~$G = U(a,b)$. The packets associated to these parameters were constructed by Adams-Johnson~\cite{AJ} in conversation with work of Arthur~\cite{AUC}, in a language predating the current formulation of the endoscopic classification of representations.  Arancibia--Moeglin--Renard~\cite{AMR18} have shown that Adams-Johnson's construction yields the same packets as those appearing in the endoscopic classification in \cite{Mok} and ~\cite{KMSW}.  

To begin, note that there is a natural way to associate to an ordered bipartition~$B$ of~$(a,b)$ an ordered partition $P_B$ of $N$, namely by letting \[ P_B = (N_1,...,N_r), \quad N_i = a_i+b_i. \]
Let $B$ be an ordered bipartition, and $L_B$ be the associated Levi subgroup. Then $\hat{L} \simeq \prod_i GL_{N_i}(\BC) \hookrightarrow \hat{G}$, is determined by $P_B$. The description of ${^L}L$, i.e. of the Galois action on $\hat{L}$, is given in \ref{section on L-groups of unitary groups}. 
Cohomological Arthur parameters depend on an embedding $\xi_{\hat{L},\hat{G}}: {^L}L \hookrightarrow {^L}G$ extending the map $\hat{L} \hookrightarrow \hat{G}$. To define $\xi_{\hat{L},\hat{G}}$, it suffices to give the image of $W_\BR$ inside of ${^L}G$.  Recall that $W_\BR$ is an extension of $\BC^\times$ by a group of order $2$, which we write as $\BC^\times \sqcup \sigma\BC^\times$ with $\sigma^2 = -1$.  We give Arthur's construction from Section 5 of \cite{AUC}. The construction of $A_\fq$ in \cite{VZ} depends on an element $\alpha$ of the Lie algebra $\ft$ of a compact torus. Let $T$ be the torus with Lie algebra $\ft$ and let $ \psi_{\hat{L},\hat{G}}: W_\BR \to {^L}G$ be the map sending $\BC^\times$ into $\hat{T}$ so that for any $\lambda^\vee \in X_*(T)$, we have \[ \lambda^\vee(\psi_{\hat{L},\hat{G}}(z)) = z^{\ip{\rho_Q}{\lambda^\vee}}\bar{z}^{-\ip{\rho_Q}{\lambda^\vee}}\] where $\rho_Q = \rho_{\hat{G}}-\rho_{\hat{L}}$, the difference of half-sums of positive roots.  Let the element~$(1 \rtimes \sigma)$ map to~$n_Q \rtimes \sigma$, where for any group~$G$,~$n_G$ is an element in the derived group of~$\hat{G}$ such that~$\ad n_G$ interchanges the positive and negative roots of~$(\hat{G}, \hat{T})$, and with~$n_Q = n_L^{-1}n_G$. 
Putting this together and denoting the embedding of $\hat{L}$ into $\hat{G}$ by $\iota$, define $ \xi_{\hat{L},\hat{G}}(g,w) = \iota(g)\psi_{\hat{L},\hat{G}}(w).$

Now let $\psi_{0,\hat{L}}: SL_2(\BC) \times W_\BR \to {^L}L$ be the Arthur parameter of the packet containing the trivial representation of $L$. It is trivial on $W_\BR$ and sends $SL_2$ to the principal $SL_2$ of $\hat{L}$. Then the Arthur parameter of $G$ corresponding to the Levi subgroup $\hat{L}$ is the composition \[ \psi_{\hat{L}} := \xi_{\hat{L},\hat{G}} \circ \psi_{0,\hat{L}} : SL_2 \times W_\BR \to {^L}G.   \] 
Adams-Johnson \cite{AJ} and more recently Nair-Prasad \cite{NP20Cohomo} have given a description of the packets attached to the parameters $\psi_{\hat{L}}$.
\begin{proposition}[\cite{AJ}, \S 3.3]
	Let $\hat{L}$ be a Levi subgroup of $\hat{G}$,  dual to a Levi~$L(\fq)$ attached to a~$\theta$-stable parabolic subalgebra $\fq$. The parameter $\psi_{\hat{L}} = \xi_{\hat{G}, \hat{L}} \circ \psi_{0,\hat{L}}$ corresponds to a packet $\Pi_{\psi}$ consisting of the representations $A_{\fq}$ such that $\hat{L}(\fq) = \hat{L}$. 
\end{proposition}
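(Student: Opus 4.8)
The plan is to verify that the parameter $\psi_{\hat L} = \xi_{\hat L,\hat G}\circ\psi_{0,\hat L}$ does indeed localize, under the Adams–Johnson recipe, to the $\theta$-stable data $\fq = \fl\oplus\fu$ with $L(\fq) = L$, and hence that its $A$-packet is the Adams–Johnson packet attached to $\fq$. Since the Adams–Johnson packets are by construction indexed by the (up to conjugacy) $\theta$-stable parabolics $\fq$ with a fixed infinitesimal character, the content of the statement is really the two-sided matching: (a) the parameter built from $L$ via the principal $SL_2$ of $\hat L$ and the twist $\xi_{\hat L,\hat G}$ reproduces, on the level of the Adams–Johnson construction, exactly the Levi datum $L$ and the element $\alpha\in\ft$ used in \cite{VZ}; and (b) every $\fq'$ with $L(\fq') = L$ (i.e. $\theta$-stable parabolics sharing the same Levi but with different nilpotent part $\fu'$) produces a representation $A_{\fq'}$ that lies in this same packet. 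I would organize the argument as: first unwind Arthur's $\xi_{\hat L,\hat G}$ and compare with the definition of the Adams–Johnson $L$-parameter in \cite{AJ}; second, use \cite{AMR18} to transport the conclusion from the Adams–Johnson packet to the KMSW/Mok packet $\Pi_{\psi_{\hat L}}$; third, read off the set of representations.

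Concretely: the first step is to identify the infinitesimal character. By the formula recalled after \eqref{local pairing} (following \cite{NP20Cohomo}), the infinitesimal character of $\Pi_{\psi_{\hat L}}$ is read from $\phi_{\psi_{\hat L}}\mid_{\BC^\times}$; since $\psi_{0,\hat L}$ sends $SL_2$ to the principal $SL_2$ of $\hat L$ and is trivial on $W_\BR$, while $\xi_{\hat L,\hat G}$ sends $\BC^\times$ into $\hat T$ through the cocharacter $\langle\rho_Q,\cdot\rangle$, the composite $\phi_{\psi_{\hat L}}\mid_{\BC^\times}(z)$ is $z^{\rho_{\hat G}}\bar z^{-\rho_{\hat G}}$ up to the $W$-action, i.e.\ it has the infinitesimal character of the trivial representation. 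This matches Vogan–Zuckerman condition (i) in Theorem~\ref{Vogan-Zuckerman Theorem}. The second, and main, step is the Levi-matching: one checks that the centralizer in $\hat G$ of $\psi_{\hat L}(SL_2)$ is exactly $\hat L$ (because the principal $SL_2$ of $\hat L$ has centralizer $Z(\hat L)$ inside $\hat L$, and the twist $\xi_{\hat L,\hat G}$ does not enlarge it, the Galois/$W_\BR$ action being accounted for in the definition of ${^L}L$), so that the Levi attached to the parameter by the Adams–Johnson dictionary is precisely $L$; this is where one must be careful that $\xi_{\hat L,\hat G}$ is the \emph{correct} extension, namely the one built from $n_Q = n_L^{-1}n_G$, so that $(1\rtimes\sigma)\mapsto n_Q\rtimes\sigma$ normalizes $\hat T$ compatibly with the real form. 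The third step then invokes \cite[\S 3.3]{AJ}: for a fixed Levi $\hat L$ and fixed (regular) infinitesimal character, the Adams–Johnson packet of $\psi_{\hat L}$ is by definition $\{A_{\fq'} : L(\fq') = L\}$, where $\fq'$ ranges over the $K$-conjugacy classes of $\theta$-stable parabolics with the prescribed Levi; and by \cite{AMR18} this equals the KMSW/Mok packet $\Pi_{\psi_{\hat L}}$.

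The hard part will be pinning down the combinatorial matching in step two: one must verify that the $\theta$-stable parabolic data produced by Arthur's construction (the choice of positive system implicit in $\rho_Q$ and in the element $n_Q$) exhausts, as one varies over the conjugates, exactly the set of $\fq'$ with $L(\fq') = L$, with no over- or under-counting — this is precisely the redundancy phenomenon flagged in Section~\ref{computation of cohomology}, where non-reduced bipartitions give isomorphic representations. I would handle this by working on the dual side with the torus $\hat T$ and the half-sum $\rho_Q = \rho_{\hat G} - \rho_{\hat L}$: the $\theta$-stable parabolics with Levi $L$ correspond to the choices of a system of positive roots for $\hat G$ making $\hat L$ standard, up to the Weyl group $W(\hat L)$, which is exactly the indexing set Adams–Johnson use; and the $A_{\fq'} \simeq A_{\fq''}$ identifications correspond to the cases where the two parabolics are already $K$-conjugate. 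Everything else — checking boundedness of $\psi_{\hat L}\mid_{W_\BR}$, admissibility, and that the resulting packet is nonempty and lands in $\PsiG$ — is routine once the Levi and infinitesimal character are identified, so I would state these briefly and cite \cite{AJ,AUC,NP20Cohomo,AMR18} rather than re-deriving them.
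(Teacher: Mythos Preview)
The paper does not prove this proposition: it is stated with attribution to Adams--Johnson \cite[\S 3.3]{AJ} and given no proof, the surrounding text merely noting that \cite{AMR18} identifies the Adams--Johnson packets with those of \cite{Mok} and \cite{KMSW}. So there is no argument in the paper to compare your proposal against.

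Your sketch is a reasonable expository unpacking of what those references contain --- computing the infinitesimal character from $\phi_{\psi_{\hat L}}\mid_{\BC^\times}$, identifying the Levi via the centralizer of the principal $SL_2$, and then invoking \cite{AJ} and \cite{AMR18} --- but it does not go beyond the cited sources, and in the end your ``third step'' simply restates the proposition as the definition of the Adams--Johnson packet. That is fine, because that is exactly the content of the citation; just be aware that you are not supplying an independent proof so much as annotating the reference. One small caution: your claim that the centralizer in $\hat G$ of $\psi_{\hat L}(SL_2)$ is \emph{exactly} $\hat L$ is not quite the right invariant (the centralizer of a principal $SL_2$ in $\hat L$ is $Z(\hat L)$, and one must then recover $\hat L$ from the full image of $\psi_{\hat L}$ including the $W_\BR$-part); this is handled correctly in \cite{AJ} and \cite{AUC}, but your phrasing elides it.
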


We now translate the descriptions of the packets $\Pi_{\psi_{\hat{L}}}$ given in \cite{AJ} and \cite{AUC} into our parametrization by ordered bipartitions.

\begin{proposition} \label{proposition on cohomological packets}
	Let $P = (N_1,...,N_r)$ be an ordered partition of $N$ and~$\psi_P:=\psi_{\hat{L}_P}$ be the corresponding parameter. Then the packet $\Pi_P := \Pi_{\psi_P}$ consists precisely of the cohomological representations $\pi_{B}$ associated to bipartitions $B$ such that $P_B = P$. 
\end{proposition}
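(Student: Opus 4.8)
The plan is to prove the proposition by combining the two preceding results — the Adams--Johnson description of $\Pi_{\psi_{\hat{L}}}$ as $\{A_\fq : \hat{L}(\fq) = \hat{L}\}$, and the Bergeron--Clozel dictionary translating $\theta$-stable parabolics $\fq$ of $U(a,b)$ into bipartitions $B = ((a_1,b_1),\dots,(a_r,b_r))$ of $(a,b)$ recalled in \S\ref{computation of cohomology} — and then bookkeeping the passage from bipartitions to partitions via $P_B = (a_1+b_1,\dots,a_r+b_r)$.

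\smallskip

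First I would recall that, by the previous proposition, $\Pi_{\psi_P} = \{A_\fq : \hat{L}(\fq) = \hat{L}_P\}$, so the task reduces to identifying which bipartitions $B$ give rise to a $\theta$-stable parabolic $\fq = \fl\oplus\fu$ whose Levi $L(\fq)$ has dual $\hat{L}_P$. Second, I would invoke Bergeron--Clozel's correspondence: $\fq$ is determined by the centralizing Levi subgroup $L_B = \prod_i U(a_i,b_i) \subset U(a,b)$, and the dual group of $L_B$ is $\hat{L}_B = \prod_i GL(N_i,\BC)$ with $N_i = a_i + b_i$, embedded block-diagonally in $\hat{G} = GL(N,\BC)$. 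Thus $\hat{L}(\fq) = \hat{L}_P$ precisely when the block sizes $(a_1+b_1,\dots,a_r+b_r)$ coincide, as an ordered tuple, with $P = (N_1,\dots,N_r)$ — i.e.\ exactly when $P_B = P$. Third, I would check that this matching is faithful in both directions: every bipartition $B$ with $P_B = P$ is compatible with the signature $(a,b)$ by construction (the $(a_i,b_i)$ are nonnegative and sum to $(a,b)$), so it genuinely contributes a cohomological representation $\pi_B = A_{\fq_B}$; and conversely every $A_\fq$ in the packet comes from such a $B$. One must also note that distinct reduced bipartitions with the same $P_B$ can give genuinely non-isomorphic $\pi_B$ (they differ in signature data within a block), so the packet $\Pi_P$ really does contain all of them and not fewer.

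\smallskip

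The one genuine subtlety — and the step I expect to require the most care — is the \emph{redundancy} in the bipartition parametrization noted in \S\ref{computation of cohomology}: a bipartition with adjacent blocks $(a_1,0),(a_2,0)$ is redundant with the one having the merged block $(a_1+a_2,0)$, and similarly for $(0,b_1),(0,b_2)$. This means that two different ordered partitions $P$ and $P'$ (differing only in how a ``definite'' block is subdivided) can index the \emph{same} packet. I would handle this by being precise that the statement is to be read up to the identification $\pi_B \simeq \pi_{B'}$, equivalently by restricting attention to reduced bipartitions; then the map $B \mapsto P_B$ on reduced bipartitions has the property that $\Pi_{\psi_P}$ depends on $P$ only through the corresponding $\theta$-stable $\fq$, and the proposition is the clean statement that the fiber of $B \mapsto P_B$ over $P$ is exactly the set of $\pi_B$ occurring in $\Pi_P$. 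It remains only to confirm that Arthur's construction of $\psi_{\hat{L}_P}$ via $\rho_Q = \rho_{\hat{G}} - \rho_{\hat{L}_P}$ depends on $L$ and not on the auxiliary choices, which follows since $\hat{L}_P$ up to conjugacy is determined by the unordered block sizes, and the infinitesimal character computation of \S\ref{local arthur packets - definition} then pins down the packet; this is already subsumed in the cited results of Adams--Johnson and Nair--Prasad, so no new work is needed there.
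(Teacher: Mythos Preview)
Your approach is the same as the paper's in outline --- invoke the Adams--Johnson description $\Pi_{\psi_{\hat L}} = \{A_\fq : \hat L(\fq) = \hat L\}$ and then translate via the Bergeron--Clozel dictionary --- but you elide the one step the paper actually argues. You assert that $\hat L_B = \prod_i GL(N_i,\BC)$ is ``embedded block-diagonally'' and hence literally depends only on $P_B$. This is not quite right: the dual Levi $\hat L(\fq_B)$ is the centralizer in $GL_N(\BC)$ of the specific diagonal element $\alpha \in \ft$ determining $B$, and for two bipartitions $B,B'$ with $P_B = P_{B'}$ these centralizers are typically \emph{conjugate but not equal}. (Already for $U(2,2)$ with $P=(2,2)$: the bipartitions $((2,0),(0,2))$ and $((1,1),(1,1))$ yield $\alpha = \mathrm{diag}(z_1,z_1,z_2,z_2)$ and $\alpha' = \mathrm{diag}(z_1,z_2,z_1,z_2)$, whose centralizers have blocks $\{1,2\},\{3,4\}$ and $\{1,3\},\{2,4\}$ respectively.) So one must argue that such conjugate Levis give $\hat G$-conjugate parameters $\psi_{\hat L}$, and that no further collapsing occurs. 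The paper does precisely this: it reduces to the action of $W(\hat G,\hat T)$ on the element $\alpha$, observes that the invariant data under this action is exactly the multiplicities $N_i = a_i+b_i$ together with the ordering of the values $z_i$, and then invokes transitivity of the Weyl group action to conclude that \emph{all} $B$ with $P_B = P$ arise. Your argument becomes complete once this Weyl-group step is supplied.

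One small correction: in your last paragraph you say $\hat L_P$ up to conjugacy is determined by the \emph{unordered} block sizes. The paper is explicit that the ordering of the $z_i$ is part of the preserved data, so the invariant is the \emph{ordered} partition --- consistent with the proposition being stated for ordered $P$. Your discussion of the redundancy for adjacent definite blocks is correct but not needed for the proof as stated; the paper does not raise it here.
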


\begin{proof}
	We explained above how~$L_B$ gives rise to~$\psi_{\hat{L}_B} = \psi_{\hat{L}_{P_B}} $; the parameters~$\psi_{\hat{L}_B}$ and~$\psi_{\hat{L}_{B'}}$ are equivalent if they are~$\hat{G}$-conjugate. The isomorphism classes of representations~$\pi_{B}$ correspond to Levi subgroups~$L_B$ containing the fixed torus~$T$, so we need only consider conjugation by~$N_{\hat{G}}(\hat{T})$. This action induces an action of the Weyl group~$W(\hat{T}, \hat{G})$ on $\hat{T}$ and on the root datum $(X_*(T), \Del(T), X_*(\hat{T}), \Del(\hat{T}))$. Note that the action of conjugation by $\hat{T}$ on cohomological Arthur parameters will only modify $\psi_{\hat{L}}$ by scaling the entries of $n_Q$. This has no impact on the parameter since $n_Q$ was only specified up to scalars in the construction of $\psi_{\hat{L}}$. 
	
	Thus to determine which Levi subgroups $L_{B'}$ give rise to the conjugacy class of $\hat{L}_P$, we consider the action of $W(\hat{G}, \hat{T})$ (denoted $W(\fg, \ft)$ in \cite{AJ}) on the set of ordered bipartitions. Following the description of $L_B$ given in~\cite{BC05}, ordered bipartitions are determined ultimately by an element $\alpha \in \ft$. The entries of conjugate elements $w \cdot \alpha$ will have the same values, but these values will be distributed differently among the two pieces of $\ft$ belonging to~$U(a)$ and~$U(b)$. We denote the values appearing in the entries of~$\alpha$ by~$z_i$. The data being preserved by conjugation is the number of entries $a_i+b_i$ which are associated to the same value $z_i$, as well as the ordering of the $z_i$. Transitivity of the Weyl group action then ensures that all the possible $B$ such that $P_B$ = $P$ give rise to $\psi_P$. 
\end{proof}

\subsection{Limit Multiplicity for Cohomological Representations} \label{last section on limit multiplicity}
We now give results on growth of cohomology. We return our usual notation, in which $F$ is global, $\cp$ is a prime of $F$, and the subscript ``$\infty$" denotes the collection of all the archimedean places. Fix the set $S_0$ as in Section \ref{section on level structures} so that it contains all but one archimedean place $v_0$. Let $G$ be the inner form of $U_{E/F}(N)$ such that $G_{v_0} \simeq U(a,b)$ and all the other factors at infinity are compact. Define the groups $K(\cp^n)$ and $\Gamma(\cp^n)$ as in Section \ref{section on level structures}. By Matsushima's formula and the inequality \eqref{adelic multiplicity}, we have \begin{equation*} \label{eq matsushima} h^i(\cp^n) := \dim(H^i(\Gamma(\cp^n), \BC) \leq \sum_{\pi = \textbf{1}^{|S_0|}\ten \pi_{v_0} \ten \pi_f} m(\pi)h^i(\fg_{v_0}, K_{v_0}; \pi_{v_0})\dim \pi_f^{K_f(\cp^n)}. \end{equation*}We can now give our theorem for growth of cohomology.
\begin{theorem} \label{final theorem}
	Let~$\psi_\infty$ be the cohomological parameter of~$G_\infty$ associated to a reordering of~$(2k,1,....,1)$. 
	Let \[ h^{i
	}_{\psi_\infty}(\cp^n) = \sum_{\psi \in \Psi(\psi_{\infty})}\sum_{\pi \in \Pi_{\psi}} m(\pi)h^i(\fg_{v_0}, K_{v_0}; \pi_{v_0})\dim \pi_f^{K_f(\cp^n)}. \] Then \[ h^{i}_{\psi_\infty}(\cp^n) \ll \norm(\cp^n)^{N(N-2k)}. \]
\end{theorem}
\begin{proof} 
	The possible contribution to cohomology of a given representation $\pi_{v_0}$ is bounded, so we need only bound the contribution to $m(\pi_\infty, \cp^n)$ coming from packets attached to parameters specializing to $\psi_{\infty}$, for each~$\pi_{\infty} \simeq \pi_{v_0} \ten {\bf 1}^{[F:\BQ]-1} \in \Pi^0_{\psi_{\infty}}$. The result then follows from Theorem~\ref{main theorem 2}, provided that cohomological parameters satisfy its assumptions.  
	From Theorem \ref{Vogan-Zuckerman Theorem} and the following comment, cohomological representations have the  (regular) infinitesimal character of the trivial representation, and trivial central character. From the discussion in Section~\ref{Arthur Packets of Cohomological Representations}, reorderings of~$(2k,1,...,1)$ correspond to parameters for which $\psi(SL_2) = \nu(2k) \oplus \nu(1)^{N-2k}$. Thus the assumptions are satisfied and the result holds. 
\end{proof} 

Note that the theorem does not in fact bound~$m(\pi_\infty, \cp^n)$ for a general~$\pi_{\infty}  \in \Pi_{\psi_\infty}$. Indeed, since Arthur packets are not disjoint, the representation $\pi_\infty$ could also appear in a different Arthur packet whose growth we do not bound. More specifically, if $\pi_\infty = \pi_B \ten  {\bf 1}^{[F:\BQ]-1} \in \Pi^0_{\psi_{\infty}}$ where~$B$ is an ordered bipartition described in Section \ref{computation of cohomology}, it could be the case that~$B$ is the reduction of an ordered bipartition $B'$, for example if we had \[ B = ((1,1),(1,0),(1,0),(0,1)), \quad B' = ((1,1),(2,0),(0,1)). \] On the other hand, if $B$ is not the reduction of another ordered bipartition, then $\pi_\infty = \pi_B$ does not appear in any other archimedean Arthur packet and the theorem produces upper bounds for $m(\pi_{\infty}, \cp^n)$. We record this discussion below.

\begin{corollary}\label{cor on reps to which the theorem applies}
Let $B = ((a_1,b_1),...,(a_r,b_r))$ be a reduced ordered bipartition such that 
\begin{itemize}
	\item[(i)] The associated partition $P_B$ is a reordering of $(2k,1,...,1)$. 
	\item[(ii)] We have $(a_i,b_i) \neq (a_{i+1},b_{i+1})$ for all $i$. 
\end{itemize}
Then \[m(\pi_B, \cp^n) \ll \norm(\cp^n)^{N(N-2k)}.\]
\end{corollary}
\begin{example}
	Assume that $a<b$ in the signature of $U(a,b)$. A family of partitions satisfying the conditions of Corollary \ref{cor on reps to which the theorem applies} are the suitable reorderings of \[B_j = \begin{cases}
	((0,1),...,(1,0),(0,1),(a-j,b-j-2),(0,1),(1,0),...,(0,1)) & N\text{ even }\\
	((a-j,b-j-1),(0,1),(1,0),...,(0,1),(1,0),(0,1)) &  N\text{ odd,}
	\end{cases}\] where $1 \leq j \leq a-1$ if $N$ is even (resp. $0 \leq j \leq a-1$ is $N$ is odd.) The computations of Section \ref{computation of cohomology} show that their lowest degree of cohomology is \[ i = i(N,a,j) = \begin{cases}
	j(N-j-2) + 2a& N \text{ even}\\ 
	j(N-j-1) + a & N \text{ odd.}
	\end{cases} \] Note that $j = \frac{1}{2}(N-2k-2)$ for $N$ even (resp.  $j = \frac{1}{2}(N-2k-1)$ for $N$ odd) which gives the family alluded to in the introduction. 
\end{example}

Additionally, the smallest $i>0$ for which $h^{i}(\cp^n) \neq 0$ is~$i = a$. When $N$ is odd, one can check that representations as above with~$j = 0$ are the only source of cohomology in degree~$a$.  In this situation, we get bounds on Betti numbers. 
\begin{corollary}
	Keeping the assumptions of Theorem \ref{final theorem}, assume additionally that $N$ is odd and that $a<b$. Then\[h^{a}(\cp^n) \ll \norm(\cp^n)^{N}.\]
\end{corollary}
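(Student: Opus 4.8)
The plan is to reduce the statement to a single application of Theorem~\ref{final theorem} by classifying which cohomological representations of $U(a,b)$ can carry nonzero cohomology in degree $a$. By Matsushima's formula together with Corollary~\ref{adelic multiplicity}, $h^a(\fp^n)$ agrees up to a multiplicative constant with $\sum_{\pi} m(\pi)\, h^a(\fg_{v_0},K_{v_0};\pi_{v_0})\, \dim\pi_f^{K_f(\fp^n)}$, the sum running over automorphic $\pi$ that are trivial at every compact archimedean place and whose local component $\pi_{v_0}$ is a cohomological representation of $U(a,b)$ with $H^a(\fg_{v_0},K_{v_0};\pi_{v_0})\neq 0$. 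So the first step is to enumerate the cohomological $\pi_{v_0}=\pi_B$ with $H^a(\fg,K;\pi_B)\neq 0$.

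Next I would carry out this combinatorial classification using \S\ref{computation of cohomology}. For $\pi_B$ with Levi $L_B=\prod_i U(a_i,b_i)$, the bottom degree of cohomology is the quantity $R(B)=ab-\sum_i a_ib_i$ of \eqref{degreeofcohomology}, and $H^{R(B)+j}(\fg,K;\pi_B)$ is the degree-$j$ cohomology of the compact dual of $L_B$, a product of complex Grassmannians; in particular it vanishes for odd $j$. Hence $H^a(\fg,K;\pi_B)\neq 0$ forces $R(B)\le a$. Since $\sum_i a_ib_i\le(\max_i b_i)(\sum_i a_i)=a\max_i b_i$, we have $R(B)\ge a(b-\max_i b_i)$, so $R(B)\le a$ already forces $\max_i b_i\in\{b-1,b\}$; a short case analysis on this maximum, using $b\ge a+1$, then shows that the only bipartitions with $R(B)\le a$ are $B=((a,b))$, which gives the trivial representation, and bipartitions with $R(B)=a$ whose underlying partition is $(N-1,1)$ (up to reordering, $B=((a,b-1),(0,1))$). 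The trivial representation occurs with multiplicity one for every $n$, so it contributes only $h^a(\fg_{v_0},K_{v_0};\BC)=O(1)\ll\norm(\fp^n)^N$.

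It then remains to bound the contribution of the Arthur packets attached to the partition $(N-1,1)$. Because $N$ is odd, $N-1=2k$ with $k=(N-1)/2$, so this partition consists of one entry equal to $2k$ together with the single remaining entry equal to $1=N-2k$; by \S\ref{Arthur Packets of Cohomological Representations} the associated archimedean parameter $\psi_\infty$ satisfies $\psi_\infty|_{SL_2(\BC)}=\nu(N-1)\oplus\nu(1)$. There are only boundedly many such $\psi_\infty$, and applying Theorem~\ref{final theorem} to each (with $\Pi^0_{\psi_\infty}=\Pi_{\psi_\infty}$) bounds the total multiplicity-weighted, degree-$a$ contribution of all automorphic representations in these packets by $\norm(\fp^n)^{N(N-2k)}=\norm(\fp^n)^N$. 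Adding the $O(1)$ contribution of the trivial representation gives $h^a(\fp^n)\ll\norm(\fp^n)^N$.

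I expect the combinatorial classification to be the only real obstacle: one must verify that every bipartition $B$ other than $((a,b))$ and the partition-$(N-1,1)$ ones has $R(B)>a$ — handling bipartitions with many size-one blocks and checking strictness of the inequality — and this is precisely where the hypothesis $a<b$, equivalently $N$ odd, enters, since in the balanced case there is an extra cohomological parameter contributing in low degree with a strictly larger exponent. The remaining points (that only $R(B)\le a$ can contribute, and that the finitely many bipartitions with underlying partition $(N-1,1)$ — including the one with $R(B)=b>a$, which contributes zero in degree $a$ — are all swept up by Theorem~\ref{final theorem}) are routine once the classification is in place.
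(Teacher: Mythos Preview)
Your proposal is correct and follows exactly the approach the paper sketches: the sentence preceding the corollary simply asserts that for $N$ odd the representations of Theorem~\ref{final theorem} are the only source of degree-$a$ cohomology, and your inequality $R(B)\ge a(b-\max_i b_i)$ together with the short case analysis on $\max_i b_i\in\{b-1,b\}$ supplies precisely that omitted verification. One minor point about your closing remark: even for $a<b$ with $N$ even the classification still yields only the partition $(N-1,1)$, so the parity hypothesis is needed not to exclude extra bipartitions but to make $N-1=2k$ even so that Theorem~\ref{final theorem} applies --- which you do invoke correctly in the body of the argument, so the proof itself is unaffected.
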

\begin{proof}
This follows from Theorem \ref{final theorem} with $2k = N-1$ provided that 
\[h^a(\fp^n) = \sum_{\psi_\infty}h^a_{\psi_\infty}(\fp^n),\] where the sum is taken over finitely many parameters $\psi_\infty$ associated to a reordering of $(1,N-1)$. Since there are finitely many Arthur of $U(a,b)$ with cohomological representations, this amounts to showing that representations with cohomology in degree $a$ belong only to packets $\Pi_{\psi_\infty}$ associated to these partitions. Going back to Proposition \ref{prop degree computation} and the following discussion, in particular to \eqref{degreeofcohomology}, we find that the only representations with cohomology in degree $a$ are of the form $\pi_B$ for $B$ a reordering of \[ ((0,1),(a,b-1)). \] These cannot be reduced, nor are they the reduction of other ordered bipartitions so by Proposition \ref{proposition on cohomological packets}, they only belong to packets associated to parameters corresponding ordered partitions are reorderings of $(1,a+b-1)=(1,N-1)$, which was exactly our requirement. 
\end{proof}

\subsection{Comparison With the Sarnak-Xue Conjecture} \label{subsection Sarnak-Xue.}

Finally, we compare our results with the conjecture of Sarnak and Xue \cite{SX91} relating multiplicity growth to decay of matrix coefficients. For an irreducible unitary representation $\pi_\infty$ of a Lie group $G$, Sarnak-Xue define \[p(\pi_\infty) = \inf \{ p \geq 2 \mid \text{ $K$-finite matrix coefficients of $\pi_\infty$ are in $L^p(G)$}\}.\] They then conjecture the following bounds for unitary $\pi_\infty$: \[ m(\pi_\infty,\cp^n) \ll_{\epsilon} Vol(X(\cp^n))^{ \frac{2}{p(\pi_{\infty})}+ \epsilon}. \] 
We will now show that the Sarnak-Xue conjecture holds for the representations for which we have proved upper bounds on multiplicity growth. 
\begin{prop} \label{prop exponent inequality}
	Let $\pi_\infty = \pi_B$ be as in Corollary \ref{cor on reps to which the theorem applies}. Then  \[\frac{2}{p(\pi_B)} \geq \frac{N-2k}{N-1}.\]
\end{prop}
Since $m(\pi_B, \cp^n) \ll \norm(\cp^{n})^{N(N-2k)}$ and the volume of $X(\cp^n)$ grows like $\norm(\cp^n)^{N^2-1}$ we obtain the following. 
\begin{corollary}
	For $\pi_B$ as in Corollary \ref{cor on reps to which the theorem applies}, we have \[ m(\pi_B, \cp^n) \ll  \Vol(X(\cp^n))^{\frac{N(N-2k)}{N^2-1}} \ll \Vol(X(\cp^n))^{\frac{2}{p(\pi_B)}} \] and the Sarnak-Xue conjecture holds. 
\end{corollary}
The remainder of the section sets up and gives a proof of Proposition \ref{prop exponent inequality}. 

\subsubsection{Computation of the Rate of Decay.}
For cohomological representations, we will give bounds on $p(\pi_B)$ from the descriptions of $\pi_B$ as Langlands quotients given in \cite{VZ}. For this section we follow the notation of Knapp \cite[\S 7, \S 8]{Kn01}.  We start by bounding~$p(\pi)$ for~$\pi$ an arbitrary Langlands quotient in terms of the inducing data.  

We recall the setup for the definition of Langlands quotients.  First, fix an Iwasawa decomposition 
\begin{equation} \label{eq iwasawa}
	G=KA_0N_0, \quad g=k_ga_gn_g
\end{equation}
of $G$. Here $K$ a maximal compact subgroup,  $A_0$ a maximal split torus, $N_0$ unipotent. By letting $M_0 = Z_K(A_0)$, this gives rise to a minimal parabolic subgroup $S_0= M_0 A_0 N_0$. Let $S=MAN$ be parabolic and regular with respect to $S_0$, which is to say that $A \subset A_0$ is split, $M$ is the no longer necessarily compact Levi component, and $N \subset N_0$ unipotent.  Denote by~$\fa$ (resp.~$\fa_0$) the Lie algebra of~$A$ (resp.~$A_0$), and by~$\fa_M$ the Lie algebra of the maximal split torus of~$M$, so that~$\fa_0 = \fa \oplus \fa_M$. Let~$\rho_0$ be the half-sum of the positive roots of~$\fa_0$ in~$\fg$. Let~$\sigma$ be a discrete series representation of~$M$ and~$\nu \in \fa^*$ be real-valued and in the open positive Weyl chamber. Denote by $\nu_0$ the extension of $\nu$ to $\fa_0$ by setting it to be zero on $\fa_M$, and let $\rho_0 \in \fa_0^*$ be the half-sum of the positive roots of $\fa_0$ in $\fg$. Denote by~$U(S,\sigma,\nu)$ the corresponding parabolically induced representation, and by~$J(S, \sigma, \nu)$ its Langlands quotient. 

The proof of the upcoming  proposition depends on a collection of results from \cite{Kn01}; before stating it we recall the setup and some nomenclature. When studying the decay of matrix coefficients, one introduces a class of so-called \emph{spherical functions} $\phi^G_\nu$ associated to $\nu \in \fa_0^*$, and defined by \[ \phi^G_\nu(g)  = \int_K e^{-(\nu+\rho_0)H(g^{-1}k)}dk \] where $H(g) = \log(a_g) \in \fa_0$ is defined using the Iwasawa decomposition \eqref{eq iwasawa}. Paraphrasing Knapp, these are ``useful yardsticks" to measure the decay of matrix coefficients. For example, it is known that the $K$-finite matrix coefficients of discrete series are dominated by $\phi_0^G$. As their name indicates, the spherical functions are $K$-invariant. A generalization of this notion is that of a \emph{$\tau$-spherical function} associated to a pair $\tau = (\tau_1,\tau_2)$ of representations of $K$: a $\tau$-spherical function is valued in the space $U_1 \ten U_2^\vee$ and is left $\tau_1$- and right $\tau_2$-equivariant. 

In \cite[ \S VII.8]{Kn01}, Knapp studies representations $\pi$ by producing and studying an asymptotic expansion of the $\tau$-spherical functions associated to the $K$-types of $\pi$. The functions $F_{\lambda-\rho_0}$ associated to $\lambda \in \fa_0^*$ appearing in this asymptotic expansion control rates of decay of the $\tau$-spherical function in various directions along $A_0$. The $\lambda-\rho_0$ such that $F_{\lambda-\rho_0}$ contributes nontrivially to the decomposition of the $\tau$-spherical functions of $\pi$ are called exponents of $\pi$. A \emph{leading exponent} of $\pi$ is an exponent $\mu -\rho_0$ of $\pi$, maximal in the sense that for any comparable exponent $\lambda-\rho_0$, the difference $\mu-\lambda$ is a linear combination of simple roots with nonnegative integer coefficients. 

\begin{proposition}\label{lemma from Knapp on bounds}
	Let~$\omega_1,...,\omega_{\dim \fa_0}$ denote the basis of~$\fa_0$ dual to the basis of~$\fa^*_0$ consisting of the simple roots. Then \begin{equation} \label{matrix coefficient ineq for lqnglands quotients} p(J(S, \sigma, \nu)) \leq \inf \{p \geq 2 \mid p \langle \nu_0-\rho_0, \omega_j \rangle  < -2 \langle \rho_0, \omega_j \rangle \text{ for all }\omega_j \}. \end{equation}
\end{proposition}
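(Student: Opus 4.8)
The plan is to dominate $p(J(S,\sigma,\nu))$ by the corresponding quantity for the full parabolically induced representation $U(S,\sigma,\nu)$, to bound the decay of the latter's matrix coefficients through their leading exponents along $A_0$, and then to invoke the standard $L^{p}$-integrability criterion for matrix coefficients. For the first reduction, recall that since $\nu$ lies in the open positive Weyl chamber, the Langlands classification realizes $J(S,\sigma,\nu)$ as the unique irreducible quotient of $U(S,\sigma,\nu)$; fix a surjection $q\colon U(S,\sigma,\nu)\twoheadrightarrow J(S,\sigma,\nu)$ of $(\fg,K)$-modules. As $q$ is $K$-equivariant it restricts to a surjection on $K$-finite vectors, and precomposition with $q$ sends $K$-finite functionals on $J(S,\sigma,\nu)$ to $K$-finite functionals on $U(S,\sigma,\nu)$. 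Hence every $K$-finite matrix coefficient of $J(S,\sigma,\nu)$ is also one of $U(S,\sigma,\nu)$, so $p(J(S,\sigma,\nu))\le p(U(S,\sigma,\nu))$; it therefore suffices to prove the asserted bound with $J$ replaced by $U$.

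The second step is the matrix coefficient estimate. Work in the Cartan decomposition $G=KA_0^{+}K$. A $K$-finite matrix coefficient $\phi$ of $U(S,\sigma,\nu)$ is bounded on compact subsets, and as $a\to\infty$ in $A_0^{+}$ its size is controlled by the exponents of $U(S,\sigma,\nu)$ along $A_0$: by Harish-Chandra's asymptotic theory of matrix coefficients (equivalently, the Casselman--Milicic analysis of the Jacquet module), $|\phi(a)|\ll a^{\,\Lambda-\rho_0}$ up to a polynomial in $\log a$, where $\Lambda$ runs over the exponents of the Jacquet module of $U(S,\sigma,\nu)$ with respect to a minimal parabolic. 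Since $U(S,\sigma,\nu)$ is parabolically induced from $S=MAN$ with inducing datum $\sigma\otimes e^{\nu}$, these exponents are built from $\nu$ and from the Jacquet exponents of $\sigma$; as $\sigma$ is a discrete series, hence tempered, the latter are weakly antidominant and contribute nothing positive, so that $\langle\Lambda,\omega_j\rangle\le\langle\nu_\fp,\omega_j\rangle$ for every $j$. Consequently there is a polynomial $P$ with
\[ |\phi(a)| \;\ll\; a^{\,\nu_\fp-\rho_0}\,P(\log a), \qquad a\in A_0^{+}. \]

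For the last step, recall that in the Cartan decomposition the Haar measure on $A_0^{+}$ is dominated by $a^{2\rho_0}\,da$, while $K$ is compact and $\phi$ is uniformly bounded on $K$-bitranslates of each $a$; hence $\phi\in L^{p}(G)$ as soon as $\int_{A_0^{+}}a^{\,p(\nu_\fp-\rho_0)+2\rho_0}\,P(\log a)^{p}\,da<\infty$. Parametrizing $A_0^{+}$ by writing $\log a=\sum_j t_j\omega_j$ with $t_j>0$ — the $\omega_j$ being precisely the basis of $\fa_0$ dual to the simple restricted roots appearing in the statement — this integral equals $\int_{(0,\infty)^{r}}\exp\!\bigl(\sum_j t_j\langle p(\nu_\fp-\rho_0)+2\rho_0,\,\omega_j\rangle\bigr)\,Q(t)\,dt$ for a polynomial $Q$, and since a polynomial weight does not alter the threshold of exponential convergence this is finite exactly when $p\,\langle\nu_\fp-\rho_0,\omega_j\rangle<-2\langle\rho_0,\omega_j\rangle$ for every $j$. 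Taking the infimum over $p\ge 2$ satisfying these strict inequalities bounds $p(U(S,\sigma,\nu))$, hence $p(J(S,\sigma,\nu))$, by the right-hand side of the asserted inequality (in which $\rho_\fp$ denotes $\rho_0$).

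The step I expect to be the real obstacle is the leading-exponent estimate of the second paragraph: one must verify that the Jacquet exponents of $U(S,\sigma,\nu)$ are controlled by $\nu_\fp$ once temperedness of $\sigma$ is taken into account — i.e.\ that no exponent larger than $\nu_\fp$ in the pairings $\langle\,\cdot\,,\omega_j\rangle$ is concealed in the full asymptotic expansion of a matrix coefficient of an induced representation — and that the bookkeeping of the $-\rho_0$ shift along the decomposition $\fa_0=\fa\oplus\fa_M$ is carried out correctly. This is exactly Harish-Chandra's asymptotic theory of matrix coefficients of (induced) representations, as developed in Knapp \cite{Kn01}; once it is granted, the remaining manipulations are a routine application of the Cartan integration formula.
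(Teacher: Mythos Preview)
Your proposal is correct and follows essentially the same route as the paper: both bound the leading exponents of the matrix coefficients by $\nu_\fp$ using that $\sigma$ is discrete series (hence tempered), and then feed this into the standard $L^p$-criterion in terms of the pairings $\langle\cdot,\omega_j\rangle$. The paper simply packages these steps as citations to Knapp \cite{Kn01} (Propositions~7.14--7.15 for the matrix-coefficient bound on $U(S,\sigma,\nu)$, 8.47 for the leading-exponent inequality, 8.48 for the $L^p$-criterion), whereas you unpack the Cartan integration by hand; the content is the same.
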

\begin{proof}
	The lemma follows from the combination of results in \cite{Kn01}. From~\cite[8.48]{Kn01}, the~$K$-finite matrix coefficients of~$J(S, \sigma, \nu)$ belong to~$L^p(G)$ if and only if for all~$\omega_i$ and all  leading exponents~$\mu-\rho_0$ of $J(S,\sigma,\nu)$, the following inequality is satisfied: \begin{equation} \label{eqweightineq}p  \langle \text{Re}\mu-\rho_0, \omega_j\rangle < -2  \langle \rho_0, \omega_j \rangle. \end{equation} 
 In~\cite[8.47]{Kn01}, we see that all leading exponents $\mu$ satisfy $ \ip{\text{Re} \mu}{\omega_j} \leq \ip{\nu_0}{\omega_j}  $ provided there exists an integer $q \geq 0$ such that the~$K$-finite matrix coefficients of~$J(S, \sigma, \nu)$ are bounded above on ~$\overline{A_0^+}$ by a multiple of ~$e^{(\nu_0-\rho)( \log a)}(1+ \|a\|)^q$. This upper bound is established for~$U(S, \sigma, \nu)$ and a fortiori for~$J(S,\sigma,\nu)$ by propositions 7.14 and 7.15 of~\cite{Kn01}, together with the fact that as a discrete series, the~$K$-finite matrix coefficients of~$\sigma$ are dominated by a multiple of the spherical function~$\varphi^M_0$.
\end{proof}
We now give an explicit bounds for~$p(\pi_B)$ in terms of~$B$, for a class of representations including those of Proposition \ref{prop exponent inequality}.
\begin{prop}
	Let~$G = U(a,b)$ with~$a+b = N$, and let~$B = ((a_1,b_1),...,(a_r,b_r))$ be a reduced ordered bipartition of~$(a,b)$. Assume that there is a single index $k$ such that $\min\{a_k,b_k\} \neq 0$, and let $N_k = a_k+b_k$. Then \[
	\frac{2}{p(\pi_B)} \geq \frac{N-N_k}{N-1} .\]
\end{prop}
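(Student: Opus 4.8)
The plan is to realize $\pi_B=A_{\fq}$ as a Langlands quotient and feed the resulting data into Proposition~\ref{lemma from Knapp on bounds}. Assume without loss of generality $a\le b$, and set $N=a+b$; since $N_k=a_k+b_k\le N$ with equality only when $r=1$ (in which case $\pi_B$ is the trivial representation and the asserted bound is vacuous), we may assume $N_k<N$. Following \cite{VZ} (in the coordinates of \cite{BC05}), write $\pi_B=J(S,\sigma,\nu)$ for a real parabolic $S=MAN$ of $U(a,b)$. Because $B$ is reduced, every block of $L_B$ other than the $k$-th is a compact $U(1)$, so the only noncompact factor of the centralizing Levi is $U(a_k,b_k)$; hence $\fa$ has dimension $\min\{a_k,b_k\}$, $\sigma$ is a discrete series of $M$, and the inducing data on $L_B$ is (up to the $\rho(\fu)$-shift) the trivial representation of $U(a_k,b_k)$, whose spherical Langlands datum has split parameter $\rho_0(U(a_k,b_k))$. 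The single ingredient I need from this is a bound on $\nu_{\fp}$: in the standard coordinates $e_1,\dots,e_a$ on $\fa_0^{*}$, normalized so that (as computed directly from the multiplicities of the restricted roots of $\fu(a,b)$) $\rho_{\fp}=\rho_0=\sum_{i=1}^{a}(N-2i+1)e_i$, the dominant representative of $\nu_{\fp}$ satisfies
\[
\langle\nu_{\fp},e_i\rangle\ \le\ N_k-2i+1 \quad (1\le i\le\min\{a_k,b_k\}),\qquad \langle\nu_{\fp},e_i\rangle=0\quad(\min\{a_k,b_k\}<i\le a).
\]
This is consistent with, and explained by, the fact from Section~\ref{Arthur Packets of Cohomological Representations} that $\pi_B$ lies in the Arthur packet with $\psi_B\mid_{SL_2(\BC)}=\nu(N_k)\oplus\nu(1)^{N-N_k}$, so that the only source of non-temperedness is the $\nu(N_k)$-block, whose exponent string truncates to the split rank of $U(a_k,b_k)$; note $\min\{a_k,b_k\}\le\min\{a,b\}=a$.

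Next I would unwind Proposition~\ref{lemma from Knapp on bounds}. The fundamental coweights $\omega_j$ of the restricted root system of $(\fg,\fa_0)$ (type $BC_a$ if $a<b$, type $C_a$ if $a=b$) are positive multiples of $e_1+\dots+e_j$, so pairings against $\nu_{\fp}$ and $\rho_{\fp}$ are insensitive to their normalization. The partial sums of $\rho_{\fp}-\nu_{\fp}$ are strictly positive (they are at least $j(N-N_k)>0$ for $j\le\min\{a_k,b_k\}$ and equal $\sum_{i=1}^{j}(N-2i+1)>0$ otherwise), so $\langle\rho_{\fp}-\nu_{\fp},\omega_j\rangle>0$ for every $j$, and the criterion of that proposition becomes $p(\pi_B)\le\max_j\frac{2\langle\rho_{\fp},\omega_j\rangle}{\langle\rho_{\fp}-\nu_{\fp},\omega_j\rangle}$. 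Therefore it suffices to show $\langle\nu_{\fp},\omega_j\rangle/\langle\rho_{\fp},\omega_j\rangle\le (N_k-1)/(N-1)$ for each $j$. Using $\sum_{i=1}^{j}(N-2i+1)=j(N-j)$ and $\sum_{i=1}^{m}(N_k-2i+1)=m(N_k-m)$, the displayed bound on $\nu_\fp$ gives
\[
\frac{\langle\nu_{\fp},\omega_j\rangle}{\langle\rho_{\fp},\omega_j\rangle}\ \le\ \frac{m(N_k-m)}{j(N-j)},\qquad m:=\min\{j,\min\{a_k,b_k\}\},
\]
so everything reduces to an elementary inequality.

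That inequality I would prove as follows. Since $1\le m\le j\le a\le N/2$, the function $x\mapsto x(N-x)$ is nondecreasing on $[m,j]$, hence $j(N-j)\ge m(N-m)$ and the right-hand side above is at most $(N_k-m)/(N-m)$. Finally $(N_k-m)(N-1)-(N_k-1)(N-m)=(m-1)(N_k-N)\le 0$ because $m\ge 1$ and $N_k<N$, so $(N_k-m)/(N-m)\le (N_k-1)/(N-1)$. (Here $m\ge 1$ is automatic: $j\ge 1$ and $\min\{a_k,b_k\}\ge 1$ since the $k$-th block is the unique mixed one.) This yields $p(\pi_B)\le\frac{2(N-1)}{N-N_k}$, with equality attainable only when $S$ is minimal and $\min\{a_k,b_k\}=\min\{a,b\}$ — which is exactly the case of the representations $\pi_k$ of Theorem~\ref{final theorem}.

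The main obstacle is the first paragraph: making the Langlands realization of $A_{\fq}$ precise enough to extract $\nu_{\fp}$ in the same coordinates and normalization as $\rho_0$. This requires carefully matching the $\rho_Q$-shift in the Adams--Johnson/\cite{VZ} construction of $A_\fq$ against the embedding of the split torus of $U(a_k,b_k)$ into that of $U(a,b)$, confirming that the $\rho(\fu)$-contribution does not enlarge the split parameter beyond the string $(N_k-1,N_k-3,\dots)$, and correctly normalizing the half-sum $\rho_0$ and the fundamental coweights for the restricted root system $BC_a$ versus $C_a$. Once this bookkeeping is settled, the remaining steps are the routine combinatorial estimates above.
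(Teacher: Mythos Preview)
Your proposal is correct and follows essentially the same route as the paper: realize $\pi_B$ as a Langlands quotient via \cite[Theorem~6.16]{VZ}, identify $\nu$ as the half-sum of restricted positive roots of the noncompact factor $U(a_k,b_k)$, and feed the resulting pairings $\langle\nu_0,\omega_j\rangle=j(N_k-j)$ and $\langle\rho_0,\omega_j\rangle=j(N-j)$ into Proposition~\ref{lemma from Knapp on bounds}. The paper dispatches your ``main obstacle'' by citing \cite[Theorem~6.16]{VZ} directly to obtain $\nu$ exactly (not just as an upper bound), working in the simple-root basis rather than the $e_i$-basis; your coordinate computation and the subsequent elementary maximization are equivalent to the paper's, which simply observes that $(N_k-j)/(N-j)$ is maximized at $j=1$.
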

\begin{proof}
	In light of Proposition \ref{lemma from Knapp on bounds}, we will realize~$\pi_B$ as a Langlands quotient~$J(S,\sigma,\nu)$, and show that for the corresponding~$\nu_0$ we have \[  \inf \{p \geq 2 \mid p \langle \nu_0-\rho_0, \omega_j \rangle  < -2 \langle \rho_0, \omega_j \rangle \text{ for all }\omega_j \} = \frac{2(N-1)}{N-N_k}.\]
	To simplify the computations, note that it is equivalent to show that  \begin{equation}
	\label{equality of q(pib)}  \max_{\omega_j} \left\{\frac{\ip{\nu_0}{\omega_j}}{\ip{\rho_0}{\omega_j}}\right\} = \frac{N_k-1}{N-1}. 	\end{equation}
	
	We recall the descriptions of cohomological representations as Langlands quotients is given in \cite[\S 6]{VZ}. Let~$L_B = \prod_i U(a_i,b_i)$ be the Levi subgroup attached to the representation~$\pi_B$ and fix an Iwasawa decomposition~$L = (K \cap L)AN$ . By assumption,~$A$ has rank~$c_k = \min \{a_k,b_k\}$; denote its Lie algebra by~$\fa$, and let~$\nu$ be the half-sum of the roots of~$\fa$ in the Lie algebra~$\fn$ of~$N$. Let~$M$ be the centralizer of~$A$ in~$G$, and fix~$S$ a choice of parabolic subgroup with Levi~$MA$, such that~$\nu$ is in the open positive Weyl chamber. Then by~\cite[Theorem 6.16]{VZ}, there is a discrete series representation~$\sigma$ of~$M$ such that~$\pi_B \simeq J(S,\sigma, \nu)$. 
	
	To conclude, we put ourselves back in the framework of Proposition \ref{lemma from Knapp on bounds}. Let~$S_0 = A_0 M_0 N_0 \subset S$ be a minimal parabolic subgroup. Then if $\fa_M$ is the Lie algebra of a maximal split torus in $M$, we have $\fa_0 = \fa \oplus \fa_M$ and $\dim \fa_0 = c := \min\{a,b\}$. Let~$\alpha_1,...,\alpha_c$ be the simple roots of~$\fa_0$ in~$\fg$. Recall that~$\nu_0$ is obtained by extending~$\nu$ by~$0$ to~$\fa_M$. Thus we can write~$\nu_0 = \sum_{j=1}^{c_k} j(N_k-j) \alpha_j$ and~$\rho_0 = \sum_{j=1}^c j(N-j) \alpha_j$.  Since the~$\omega_i$ are by construction the dual basis to the~$\alpha_i$, we have \[ \frac{\ip{\nu_0}{\omega_j}}{\ip{\rho_0}{\omega_j}} = \begin{cases}
	\frac{N_k-j}{N-j} & j\leq c_k \\ 0 & j>c_k.
	\end{cases} \] 
	The maximum is achieved when $j=1$.
\end{proof}

\bibliography{refmathilde2}{}

\end{document}